\providecommand{\U}[1]{\protect\rule{.1in}{.1in}}
\newtheorem{theorem}{Theorem}[section]
\newtheorem{lemma}[theorem]{Lemma}
\newtheorem{proposition}[theorem]{Proposition}
\newtheorem{remark}[theorem]{Remark}
\def\<{\langle}
\def\>{\rangle}
\def\d{{\rm d}}
\def\L{\mathcal{L}}
\def\div{{\rm div}}
\def\E{\mathbb{E}}
\def\N{\mathbb{N}}
\def\P{\mathbb{P}}
\def\R{\mathbb{R}}
\def\T{\mathbb{T}}
\def\Z{\mathbb{Z}}
\begin{document}

\title{Kolmogorov equations associated to the stochastic 2D Euler equations}

\author{Franco Flandoli\footnote{Email: franco.flandoli@sns.it. Scuola Normale Superiore of Pisa, Italy.} \ and
Dejun Luo\footnote{Email: luodj@amss.ac.cn. RCSDS, Academy of Mathematics and Systems Science, Chinese Academy of Sciences, Beijing 100190, China, and School of Mathematical Sciences, University of the Chinese Academy of Sciences, Beijing 100049, China. }}

\maketitle

\makeatletter
\renewcommand\theequation{\thesection.\arabic{equation}}
\@addtoreset{equation}{section} \makeatother

\begin{abstract}
The Kolmogorov equation associated to a stochastic 2D Euler equations with transport type noise and random initial conditions is studied by a direct approach, based on Fourier analysis, Galerkin approximation and Wiener chaos methods. The method allows us to generalize previous results and to understand the role of the regularity of the noise, in relation to a limiting value of roughness.
\end{abstract}

\textbf{MSC 2010}. 35Q31, 35Q84

\textbf{Keywords}. 2D Euler equations, Kolmogorov equation, white noise, Galerkin approximation, Wiener chaos

\section{Introduction}

Stochastic 2D Euler equations with transport type noise and random Gaussian initial conditions seem to us a rich subject for theoretical investigations and, with due idealization, a potentially interesting model for stationary inverse cascade turbulence. The topic was initiated by S. Albeverio and collaborators in a series of works and since then it received much attention; see for instance \cite{ARH, AH, AC}, the review \cite{AB} and references in \cite{FL}. The case with transport noise is more recent; we have initiated its investigation in \cite{FL}, where we have constructed solutions to the stochastic 2D Euler equations and have proved that their laws satisfy a certain Fokker--Planck equation and suitable gradient estimates. The present paper complements \cite{FL} with an entirely different approach: we study directly the associated Kolmogorov equation and prove existence and some regularity of solutions. The techniques are very different, based in \cite{FL} on point vortex approximation, here on Fourier analysis, Galerkin approximation and Wiener chaos methods. This approach allows us to extend the results of \cite{FL} and to prove a special property of the limit case $\gamma=2$, see below. We would like to mention that Kolmogorov equations in infinite dimensional spaces have been widely studied in the past, see e.g. \cite{KRZ, FG, DaPZ, BBDaPR, RS, Sauer, BKRS}; however, the equation treated in this paper is not covered by those ones, and it requires some special techniques.

Consider the vorticity formulation of the 2D stochastic Euler equation on $\T^2= \R^2/ \Z^2$:
  \begin{equation}\label{vorticity-Euler}
  \d \omega_t + u_t\cdot \nabla\omega_t +\sum_{k\in \Z^2_0} \sigma_k\cdot \nabla\omega_t \circ\d W^k_t=0,
  \end{equation}
where $\Z^2_0= \Z^2 \setminus \{0\}$, $\{W^k_\cdot \}_{k\in \Z^2_0}$ is a family of independent standard Brownian motions, and
  \begin{equation}\label{vector-fields}
  \sigma_k(x)=\frac{k^\perp}{|k|^\gamma} \begin{cases}
  \cos(2\pi k\cdot x), & k\in \Z^2_+, \\
  \sin(2\pi k\cdot x), & k\in \Z^2_-,
  \end{cases} \quad x\in \T^2,\, \gamma \geq 2,
  \end{equation}
with $k^\perp = (k_2,-k_1)$, $\Z^2_+ = \big\{k\in \Z^2_0: (k_1 >0) \mbox{ or } (k_1=0,\, k_2>0) \big\}$ and $\Z^2_- = -\Z^2_+$. The generator of $\omega_t$ is
  \begin{equation}\label{generator}
  \L F(\omega)= \frac12 \sum_{k\in \Z^2_0} \big\<\sigma_k\cdot \nabla\omega, D \<\sigma_k\cdot \nabla\omega, D F\>\big\> - \big\< u(\omega)\cdot \nabla\omega, D F\big\>, \quad F\in \mathcal{FC}_P,
  \end{equation}
where $\mathcal{FC}_P$ is the space of cylinder functionals on $H^{-1-}(\T^2)$ (see \eqref{cylinder-funct} below). Here, for $s\in \R$, we denote by $H^s(\T^2)$ the usual Sobolev spaces on $\T^2$ and $H^{-1-}(\T^2) = \cap_{s>0}\, H^{-1-s}(\T^2)$. Recall the Biot--Savart law:
  \begin{equation}\label{BS-law}
  u(\omega)(x)= \<\omega, K(x-\cdot)\> = \int_{\T^2} K(x-y)\,\omega(\d y),
  \end{equation}
where the Biot--Savart kernel $K$ has the expression
  \begin{equation}\label{BS-kernel}
  K(x)= 2\pi {\rm i} \sum_{k\in \Z^2_0} \frac{k^\perp}{|k|^2} {\rm e}^{2\pi {\rm i} k \cdot x} = -2\pi \sum_{k\in \Z^2_0} \frac{k^\perp}{|k|^2} \sin(2\pi k \cdot x).
  \end{equation}
Note that for all $p\in [1,2)$, $K\in L^p(\T^2,\d x)$, thus by \cite[p. 217, Theorem 3.5.7]{Loukas}, the above series converge to $K$ in $L^p(\T^2,\d x)$.

Let $\mu$ be the law of the white noise on $\T^2$, which is also called the enstrophy measure and supported by $H^{-1-}(\T^2)$. In the recent paper \cite{FL}, we proved that if $\gamma>2$ in \eqref{vector-fields}, then for any $\rho_0 \in C_b\big( H^{-1-}(\T^2), \R_+ \big)$ with $\int \rho_0 \,\d\mu =1$, the equation \eqref{vorticity-Euler} admits a white noise solution which is a stochastic process taking values in $H^{-1-}(\T^2)$, and the distribution at any time $t\in [0,T]$ has a density $\rho_t$ w.r.t. to $\mu$. Moreover, $\rho_t$ satisfies the forward Kolmogorov equation (or Fokker--Planck equation)
  \begin{equation}\label{Kolmog-eq}
  \partial_t \rho_t = \L^\ast \rho_t, \quad \rho|_{t=0} = \rho_0
  \end{equation}
associated to the operator $\L$ in \eqref{generator}, and the following gradient estimate holds:
  $$\sum_{k\in \Z^2_0} \int_0^T \!\! \int_{H^{-1-}(\T^2)} \<\sigma_k\cdot \nabla\omega, D \rho_t\>^2 \,\d\mu \d t\leq \|\rho_0\|_\infty^2. $$
The method in \cite{FL} is based on the idea that the enstrophy measure $\mu$ can be approximated by point vortices when the number of points goes to infinity.

The purpose of the current work is to solve the forward Kolmogorov equation \eqref{Kolmog-eq}, by using the method of Galerkin approximation. We can prove the same results as in \cite{FL} for the case $\gamma >2$ (see Theorem \ref{main-result} below), provided that $\rho_0\in L^2 \big( H^{-1-}(\T^2), \mu \big)$ instead of $\rho_0 \in C_b\big( H^{-1-}(\T^2) \big)$. However, when $\gamma =2$, it turns out that any weak limit of the Galerkin approximation is trivial. We remark that, since $\sigma_k \cdot \nabla \omega$ ($\forall \, k\in \Z_0^2$) and $u(\omega) \cdot \nabla \omega$ can be viewed as divergence free fields w.r.t. the white noise measure $\mu$ on $H^{-1-}(\T^2)$, the operators $\L$ and $\L^\ast$ differ from each other only by a sign of the drift parts. Therefore the approach of this paper works also for the backward Kolmogorov equation with little change. In order to state precisely our main results, we introduce some notations.

First, we use $\<\cdot, \cdot\>$ to denote the dual pairing between the space $C^\infty(\T^2)'$ of distributions and the test functions $C^\infty(\T^2)$. Let $\{e_k: k\in \Z^2_0\}$ be the usual orthonormal basis of $L^2(\T^2,\R)$ consisting of zero-average functions:
  \begin{equation}\label{real-basis}
  e_k(x) = \sqrt{2} \begin{cases}
  \cos(2\pi k\cdot x), & k\in \Z^2_+ , \\
  \sin(2\pi k\cdot x), & k\in \Z^2_-.
  \end{cases}
  \end{equation}
By $\Lambda \Subset \Z^2_0$ we mean that $\Lambda$ is a finite subset of $\Z^2_0$, and $\R^\Lambda$ is the $(\#\Lambda)$-dimensional Euclidean space. We introduce the family of cylindrical functions on $H^{-1-}$:
  \begin{equation}\label{cylinder-funct}
  \mathcal{FC}_P:= \big\{F(\omega)= f(\<\omega, e_l\>; l\in \Lambda) \mbox{ for some } \Lambda\Subset \Z^2_0 \mbox{ and } f\in C_P^\infty \big(\R^\Lambda \big) \big\},
  \end{equation}
where $C_P^\infty\big(\R^\Lambda \big) $ is the collection of smooth functions on $\R^\Lambda$ having polynomial growth together with all the derivatives. To simplify notations, we shall write $F(\omega) = f\circ \Pi_\Lambda(\omega)$, see Section \ref{sec-app-operator} for details. For a cylindrical function $F = f\circ \Pi_\Lambda$, we define
  $$DF= D F(\omega)= \sum_{j\in \Lambda} ((\partial_j f) \circ \Pi_\Lambda)\, e_j,$$
where $\partial_j f= \partial_{\xi_j} f$ is the partial derivative.

We regard $\sigma_k \cdot \nabla\omega$ as a distribution which is understood as follows: for any $\phi\in C^\infty(\T^2)$,
  $$\<\sigma_k\cdot \nabla\omega, \phi\> = - \<\omega, \sigma_k\cdot \nabla\phi\>,$$
since $\sigma_k$ is smooth and divergence free on $\T^2$. Then for any cylindrical function $F= f\circ \Pi_\Lambda$, we have
  $$\<\sigma_k\cdot \nabla\omega, D F\> = \sum_{j\in \Lambda} ((\partial_j f) \circ \Pi_\Lambda) \<\sigma_k\cdot \nabla\omega, e_j\>. $$
Note that $\<\sigma_k\cdot \nabla\omega, D F\>$ is also a cylindrical function.\footnote{This is the reason why we require that, in the definition of cylindrical functions, $f\in C_P^\infty \big(\R^\Lambda \big)$ instead of $f\in C_b^\infty \big(\R^\Lambda \big)$, since $\<\sigma_k\cdot \nabla\omega, e_j\>$ is unbounded.} It can be shown that $\div_\mu(\sigma_k\cdot \nabla\omega)=0$ in the distributional sense (see \cite[Lemma 4.5]{FL}), i.e., for any cylindrical function $F$,
  $$\int \<\sigma_k\cdot \nabla\omega, D F\> \,\d\mu =0.$$
The meaning of the drift part $\< u(\omega)\cdot \nabla\omega, D F \>$ in \eqref{generator} is more delicate, and the reader is referred to Section \ref{appendix-drift} of this paper (see also \cite{DaFR}).

Given $G\in L^2(H^{-1-},\mu)$, we say that $\<\sigma_k\cdot \nabla\omega, D G\>$ exists in the distributional sense if there exists some $\varphi \in L^2(H^{-1-},\mu)$ such that, for any cylindrical function $F$, it holds
  $$\int \<\sigma_k\cdot \nabla\omega, D F\> G \,\d\mu = - \int F \varphi \,\d\mu. $$
In this case we shall write $\<\sigma_k\cdot \nabla\omega, D G\> =\varphi$. We can give a similar definition when $G$ is time-dependent.

Using the method of Galerkin approximation (see Section \ref{sec-galerkin}), we can prove

\begin{theorem} \label{main-result}
Assume $\rho_0\in L^2(H^{-1-},\mu)$ and $\gamma\geq 2$ in \eqref{vector-fields}. There exists a measurable function $\rho \in L^\infty \big(0,T; L^2( H^{-1-}, \mu) \big)$ such that
\begin{itemize}
\item[\rm (i)] for every $k\in \Z^2_0$, $\<\sigma_k \cdot \nabla\omega , D \rho_t \>$ exists in the distributional sense;
\item[\rm (ii)] the gradient estimate holds:
  \begin{equation}\label{main-result.1}
  \sum_{k\in \Z^2_0} \int_0^T \!\! \int \<\sigma_k \cdot \nabla\omega , D \rho_t \>^2 \, \d\mu\d t \leq \|\rho_0 \|_{L^2(\mu)}^2;
  \end{equation}
\item[\rm (iii)] if $\gamma> 2$, then for any cylindrical function $F$ and $\alpha \in C^1 \big([0,T],\R \big)$ satisfying $\alpha(T)=0$, one has
  \begin{equation}\label{main-result.2}
  \aligned
  0= &\ \alpha(0)\int F \rho_0 \,\d\mu + \int_0^T \!\! \int \rho_t \big( \alpha'(t) F - \alpha(t) \< u(\omega) \cdot\nabla\omega , D F\> \big) \,\d\mu\d t \\
  & - \frac12 \sum_{k\in \Z^2_0} \int_0^T \!\! \int \alpha(t) \< \sigma_k \cdot \nabla \omega, D F\> \< \sigma_k \cdot \nabla\omega, D \rho_t \> \,\d\mu\d t.
  \endaligned
  \end{equation}
\end{itemize}
\end{theorem}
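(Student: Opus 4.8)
The plan is to construct $\rho$ as a weak limit of a spectral Galerkin scheme. For $\Lambda\Subset\Z^2_0$ I would restrict attention to functionals of $\Pi_\Lambda\omega$ and replace $\omega$ by $\Pi_\Lambda\omega$ inside the coefficients $\sigma_k\cdot\nabla\omega$ and $u(\omega)\cdot\nabla\omega$, obtaining a finite-dimensional generator $\L_\Lambda$ acting on $L^2(\mu_\Lambda)$, where $\mu_\Lambda=(\Pi_\Lambda)_\#\mu$ is the standard Gaussian on $\R^\Lambda$. Writing for brevity $A_kF:=\<\sigma_k\cdot\nabla\omega,DF\>$ and $\mathcal BF:=\<u(\omega)\cdot\nabla\omega,DF\>$, together with their truncations $A_k^\Lambda,\mathcal B^\Lambda$, one has $\L_\Lambda=\frac12\sum_k(A_k^\Lambda)^2-\mathcal B^\Lambda$; for fixed $j\in\Lambda$ only finitely many $k$ give $\Pi_\Lambda(\sigma_k\cdot\nabla e_j)\neq0$, so the sum over $k$ is finite and $\L_\Lambda$ is a bona fide second-order operator with polynomial coefficients. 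The structural point is that the truncated fields still conserve the quadratic enstrophy $\sum_{l\in\Lambda}\<\omega,e_l\>^2$ (because $\sigma_k$ and the Euler nonlinearity are divergence free), so $\mu_\Lambda$ is invariant and $A_k^\Lambda,\mathcal B^\Lambda$ are skew-adjoint on $L^2(\mu_\Lambda)$. Hence the Markov semigroup $P_t^\Lambda$ of the truncated SDE is an $L^2(\mu_\Lambda)$-contraction and $\rho_t^\Lambda:=(P_t^\Lambda)^\ast\rho_0^\Lambda$, with $\rho_0^\Lambda:=\E[\rho_0\mid\Pi_\Lambda]$, solves $\partial_t\rho^\Lambda=\L_\Lambda^\ast\rho^\Lambda$.

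Testing this equation against $\rho_t^\Lambda$ and using skew-adjointness, the drift $\mathcal B^\Lambda$ drops out and only the dissipative part survives, yielding the energy identity
\[
\tfrac12\|\rho_t^\Lambda\|_{L^2(\mu)}^2+\tfrac12\sum_{k\in\Z^2_0}\int_0^t\!\!\int (A_k^\Lambda\rho_s^\Lambda)^2\,\d\mu\,\d s=\tfrac12\|\rho_0^\Lambda\|_{L^2(\mu)}^2\le\tfrac12\|\rho_0\|_{L^2(\mu)}^2,
\]
which is uniform in $\Lambda$ and valid for every $\gamma\ge2$. This furnishes a uniform bound on $\{\rho^\Lambda\}$ in $L^\infty(0,T;L^2(\mu))$ and a uniform bound on $\sum_k\int_0^T\!\int(A_k^\Lambda\rho^\Lambda)^2\,\d\mu\,\d t$. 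Passing to a subsequence, $\rho^\Lambda\rightharpoonup\rho$ weakly-$\ast$ in $L^\infty(0,T;L^2(\mu))$ and, for each fixed $k$, $A_k^\Lambda\rho^\Lambda\rightharpoonup g_k$ weakly in $L^2((0,T)\times H^{-1-})$.

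To prove (i) I would identify $g_k=\<\sigma_k\cdot\nabla\omega,D\rho_t\>$: testing against $\alpha(t)F(\omega)$ and integrating by parts via $(A_k^\Lambda)^\ast=-A_k^\Lambda$ gives $\int_0^T\!\!\int\alpha (A_k^\Lambda\rho^\Lambda)F\,\d\mu\,\d t=-\int_0^T\!\!\int\alpha\,\rho^\Lambda (A_k^\Lambda F)\,\d\mu\,\d t$; since $F$ is a fixed cylinder function, $A_k^\Lambda F\to A_kF$ strongly in $L^2(\mu)$, and letting $\Lambda\uparrow\Z^2_0$ yields $\int_0^T\!\!\int\alpha\,g_kF\,\d\mu\,\d t=-\int_0^T\!\!\int\alpha\,\rho\,(A_kF)\,\d\mu\,\d t$, which is precisely the distributional definition preceding the theorem. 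Claim (ii), i.e. \eqref{main-result.1}, then follows from the energy identity by weak lower semicontinuity of the $L^2$-norm, applied first to a finite set of modes and then letting it exhaust $\Z^2_0$. Both (i) and (ii) hold for all $\gamma\ge2$, since they never involve the full sum over $k$ of $A_k$ applied to a test function.

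For (iii) I would take the weak form of the truncated equation and pass to the limit in \eqref{main-result.2} term by term. The terms $\alpha(0)\int F\rho_0^\Lambda$ and $\int_0^T\!\int\alpha'F\rho^\Lambda$ converge by $L^2$-convergence of $\rho_0^\Lambda$ and weak-$\ast$ convergence of $\rho^\Lambda$. For the diffusion term $\frac12\sum_k\int_0^T\!\int\alpha\,(A_k^\Lambda F)(A_k^\Lambda\rho^\Lambda)$ I would combine, for each $k$, the strong convergence of $A_k^\Lambda F$ with the weak convergence of $A_k^\Lambda\rho^\Lambda$, and then exchange the limit with the sum over $k$. This exchange is exactly where $\gamma>2$ is needed: a direct computation gives $\|A_kF\|_{L^2(\mu)}^2\lesssim_F |k|^{2-2\gamma}$, so $\sum_k\|A_kF\|_{L^2(\mu)}^2<\infty$ iff $\gamma>2$, and then the tail of the $k$-sum is controlled uniformly in $\Lambda$ by Cauchy--Schwarz against the uniform gradient bound. (At $\gamma=2$ this sum diverges, which is the analytic source of the triviality of the limit alluded to in the introduction.) The remaining, and genuinely delicate, obstacle is the nonlinear drift term $\int_0^T\!\int\alpha\,\rho^\Lambda\,(\mathcal B^\Lambda F)$: one must first show that $\mathcal BF=\<u(\omega)\cdot\nabla\omega,DF\>$ is a well-defined element of $L^2(\mu)$ and that $\mathcal B^\Lambda F\to\mathcal BF$ strongly there. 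This rests on the second-Wiener-chaos analysis of the Biot--Savart nonlinearity: after symmetrizing the interaction coefficient the diagonal vanishes (because $p^\perp\cdot p=0$) and the factor $|p|^{-2}-|q|^{-2}$ produces enough decay to make the chaos coefficients square-summable, so that $\mathcal BF\in L^2(\mu)$ with convergent Galerkin truncations; this is the content of Section~\ref{appendix-drift}. Granting this strong convergence, the product of strong convergence and weak-$\ast$ convergence of $\rho^\Lambda$ closes the drift term and establishes \eqref{main-result.2}. I expect this drift analysis, together with the sharp summability threshold $\sum_k\|A_kF\|^2<\infty$ at $\gamma=2$, to be the main obstacle.
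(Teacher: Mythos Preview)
Your proposal is correct and follows essentially the same route as the paper: Galerkin truncation, invariance of the Gaussian under the truncated dynamics giving skew-adjointness and the energy identity, weak compactness to extract $\rho$ and the family $\{g_k\}$, identification of $g_k$ as the distributional derivative by duality, and passage to the limit in the weak equation using the summability criterion $\sum_k\|A_kF\|_{L^2(\mu)}^2<\infty\iff\gamma>2$ (the paper's Proposition~\ref{prop-gradient-cylinder}) together with the second-chaos analysis of the drift (Theorem~\ref{thm-app}). The only cosmetic differences are that the paper (a) takes $\rho_0^N\in C_P^\infty(H_N)$ approximating $\rho_0$ rather than conditional expectations, so the finite-dimensional equation has a classical solution via probabilistic representation, (b) explicitly restricts the diffusion sum in $\L_N$ to $|k|\le N$ rather than relying on the automatic finiteness you observe, and (c) extracts the weak limit of $\{A_k^\Lambda\rho^\Lambda\}_k$ jointly in $L^2(\Z^2_0\times[0,T]\times H^{-1-},\#\otimes\d t\otimes\mu)$ rather than mode by mode, avoiding a diagonal argument.
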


\begin{remark}
Any solution $\rho_t$ to the equation \eqref{main-result.2} is weakly continuous in $t$, namely, for any cylindrical function $F$, $t\to \int \rho_t F \,\d\mu$ is continuous on $[0,T]$. Indeed, we deduce from \eqref{main-result.2} that, in the distributional sense,
  $$\frac{\d}{\d t} \int \rho_t F \,\d\mu = \int \rho_t \< u(\omega) \cdot\nabla\omega , D F\> \,\d\mu - \frac12 \sum_{k\in \Z^2_0} \int \< \sigma_k \cdot \nabla \omega, D F \> \< \sigma_k \cdot \nabla\omega, D \rho_t \> \,\d\mu.$$
Since the r.h.s. is integrable on $[0,T]$, we conclude that $t\to \int \rho_t F \,\d\mu$ is absolutely continuous. In particular, taking $F \equiv 1$, we obtain $\int \rho_t\,\d\mu = \int \rho_0\,\d\mu$ for all $t\in [0,T]$.
\end{remark}

The case $\gamma=2$ is quite tricky and we are unable to show that the limit satisfies some equation. Indeed, by Proposition \ref{prop-gradient-cylinder} below, for any nonconstant cylindrical function $F$,
  $$ \sum_{k\in \Z^2_0} \int \< \sigma_k\cdot \nabla \omega, D F \>^2 \,\d\mu <+\infty$$
if and only if $\gamma>2$ in \eqref{vector-fields}. This result suggests that, when $\gamma=2$, the diffusion part of $\L F(\omega)$ in \eqref{generator} might be divergent. We can prove this claim by decomposing the partial sum of the diffusion part into two terms: the first one is convergent in any $L^p(H^{-1-},\mu)$, while the second one explodes at a logarithmic rate (see Proposition \ref{prop-operator}). Based on this decomposition, for a slightly modified approximation scheme (see \eqref{approx-generator-1} for details), we can show that the projections of the limit $\rho_t$ on any nonconstant Hermite polynomials vanish.

\begin{theorem}\label{main-result-2}
Assume $\gamma=2$. Then any limit points of the modified Galerkin approximation is trivial, i.e., for all $t\in (0,T]$, $\rho_t = \int \rho_0\,\d\mu$ a.e.
\end{theorem}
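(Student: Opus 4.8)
The plan is to exploit the Wiener chaos decomposition of $L^2(H^{-1-},\mu)$ together with the fact that, at the critical value $\gamma=2$, the truncated diffusion damps every nonconstant chaos at a rate that diverges logarithmically in the Galerkin dimension. Write $x_l=\<\omega,e_l\>$; under $\mu$ these are i.i.d.\ standard Gaussians, so $L^2(\mu)=\bigoplus_{n\geq0}\mathcal{H}_n$ with $\mathcal{H}_n$ the $n$-th Wiener chaos spanned by the Hermite polynomials of degree $n$, and I denote by $P_n$ the orthogonal projection onto $\mathcal{H}_n$. Each first-order operator $\<\sigma_k\cdot\nabla\omega,DF\>$ is the derivation $V_k=-\sum_{j,l}a^k_{jl}\,x_l\,\partial_j$ with $a^k_{jl}=\<\sigma_k\cdot\nabla e_j,e_l\>$; since $\sigma_k$ is divergence free, $a^k$ is skew-symmetric, so $V_k$ generates a $\mu$-preserving rotation, is skew-adjoint on $L^2(\mu)$, and preserves every $\mathcal{H}_n$. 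Consequently the symmetric diffusion part of the modified scheme \eqref{approx-generator-1}, namely $\mathcal{A}_N=\tfrac12\sum_{|k|\leq N}V_k^2$, is self-adjoint, nonpositive and block-diagonal along the chaoses, with Dirichlet form $\mathcal{E}_N(G,G)=\tfrac12\sum_{|k|\leq N}\int(V_kG)^2\,\d\mu=-\int G\,\mathcal{A}_NG\,\d\mu$. The Euler drift $-\<u(\omega)\cdot\nabla\omega,D\cdot\>$ is likewise a $\mu$-divergence-free first-order operator, hence skew-adjoint, and therefore contributes nothing to the $L^2$ energy balance.

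With this structure the argument reduces to an energy estimate. Testing the approximate equation $\partial_t\rho^N=\L_N^\ast\rho^N$ against $\rho^N$ and using skew-adjointness of the drift gives, exactly as in the derivation of \eqref{main-result.1},
\begin{equation*}
\tfrac12\|\rho^N_T\|_{L^2(\mu)}^2+\int_0^T\mathcal{E}_N(\rho^N_t,\rho^N_t)\,\d t=\tfrac12\|\rho^N_0\|_{L^2(\mu)}^2\leq\tfrac12\|\rho_0\|_{L^2(\mu)}^2,
\end{equation*}
so $\int_0^T\mathcal{E}_N(\rho^N_t,\rho^N_t)\,\d t\leq\tfrac12\|\rho_0\|_{L^2(\mu)}^2$ uniformly in $N$. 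Writing $\bar\rho=\int\rho_0\,\d\mu$, which is conserved so that $P_0\rho^N_t\equiv\bar\rho$, fix a nonconstant degree $n\geq1$. Since $\mathcal{E}_N$ is block-diagonal and nonnegative, $\mathcal{E}_N(\rho^N_t,\rho^N_t)\geq\mathcal{E}_N(P_n\rho^N_t,P_n\rho^N_t)\geq\lambda_n^N\,\|P_n\rho^N_t\|_{L^2(\mu)}^2$, where $\lambda_n^N$ is the bottom of the spectrum of $-\mathcal{A}_N$ on $\mathcal{H}_n$; hence $\int_0^T\|P_n\rho^N_t\|_{L^2(\mu)}^2\,\d t\leq\|\rho_0\|_{L^2(\mu)}^2/(2\lambda_n^N)$. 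If $\lambda_n^N\to\infty$, then passing to the weak-$\ast$ limit $\rho$ yields $\int_0^T\<\rho_t,H\>^2\,\d t=0$ for every Hermite polynomial $H$ of degree $n$, and letting $n$ range over all $n\geq1$ gives $\rho_t=\bar\rho$ for a.e.\ $t$; weak continuity in $t$, as in the Remark following Theorem \ref{main-result}, then upgrades this to every $t\in(0,T]$.

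The heart of the matter, and the main obstacle, is to show that $\lambda_n^N\to\infty$ for each fixed $n\geq1$. On the first chaos $-\mathcal{A}_N|_{\mathcal{H}_1}$ is a positive semidefinite matrix $M^N$ whose $(m,m)$-entry equals $\tfrac12\sum_{|k|\leq N}\|\sigma_k\cdot\nabla e_m\|_{L^2}^2$; by Proposition \ref{prop-operator} this is the part that explodes like $\log N$ (consistently with Proposition \ref{prop-gradient-cylinder}), while the remainder converges in every $L^p(\mu)$. Concretely, Proposition \ref{prop-operator} provides a decomposition $M^N=(\log N)\,\Theta+B_N$ with $\Theta$ the Fourier multiplier $e_m\mapsto c\,|m|^2e_m\geq c\,\mathrm{Id}$ and $\sup_N\|B_N\|<\infty$, whence $\lambda_1^N\geq c\log N-\|B_N\|\to\infty$. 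To propagate this to higher chaoses I use that $V_k$ acts on $\mathcal{H}_1$ as a skew-symmetric matrix and on $\mathcal{H}_n$ as its differential second quantization $d\Gamma$: the divergent part of $-\mathcal{A}_N|_{\mathcal{H}_n}$ is $(\log N)\,d\Gamma(\Theta)|_{\mathcal{H}_n}$, whose eigenvalues are sums $\sum_{i=1}^n c\,|m_i|^2\geq cn\geq c$, so it is bounded below by $c\log N$ on all of $\mathcal{H}_n$, while the second-quantized remainder stays bounded on each fixed $\mathcal{H}_n$; thus $\lambda_n^N\to\infty$. The delicate point is exactly this propagation: individual rotations $V_k$ possess large invariant subspaces inside each $\mathcal{H}_n$ (already rotation invariants of the form $x^2$-combinations appear at $n=2$), so neither positivity of $\lambda_n^N$ nor, more importantly, its logarithmic growth can be read off from any single $k$; both must be extracted from the joint structure of the family $\{a^k\}_{|k|\leq N}$ through the precise decomposition of Proposition \ref{prop-operator} together with the second-quantization bound above.
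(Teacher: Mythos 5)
Your strategy --- the $L^2$ energy identity plus a diverging spectral gap for the diffusion form on each nonconstant Wiener chaos --- is genuinely different from the paper's, and its reduction steps (skew-adjointness of the fields $V_k$, preservation of each chaos, block-diagonality of the Dirichlet form, passage to the weak-$\ast$ limit) are sound. But the heart of your argument, the claim that $\lambda_n^N\to\infty$, is asserted rather than proved, and the tool you invoke does not yield it. Proposition \ref{prop-operator} gives, for each \emph{fixed} pair of frequencies $(l,m)$, convergence of $R_{l,m}(N)$ in $L^p(\mu)$, with constants that grow with $|l|,|m|$ (the appendix bounds are of the form $C_l\sum_{N<|k|\le M}|k|^{-4}$, and $\|R_{l,m}(N)\|_{L^2(\mu)}$ grows polynomially in the frequencies). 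That is exactly enough to control $\L_N^0 F$ for a \emph{fixed} cylindrical $F$, which is all the paper ever needs, but it is far from the uniform operator-norm bound $\sup_N\|B_N\|<\infty$ on the chaos $\mathcal{H}_n$ that your spectral estimate requires: $\mathcal{H}_n$ is infinite-dimensional, the infimum defining $\lambda_n^N$ runs over Hermite polynomials of arbitrarily high frequency, and in that regime the ``remainder'' is not uniformly bounded. Since both the remainder and the divergent part scale like $|l|^2$ in the frequency, what you would actually need is relative boundedness of the remainder with respect to $b_N\,d\Gamma(|l|^2)$ with a constant strictly below $1$, uniformly in $N$; nothing in Proposition \ref{prop-operator} or in your sketch establishes this.

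There is a second, related gap: your Dirichlet form $\mathcal{E}_N(G,G)=\tfrac12\sum_k\|V_kG\|_{L^2(\mu)}^2$ uses the \emph{unprojected} fields, whereas the Galerkin energy identity \eqref{estimate-2} involves the projected ones, $\<G_N^k,\nabla_N\rho^N_s\>=\<\sigma_k\cdot\nabla(\Pi_N\omega),D\rho^N_s\>$. The solution $\rho^N_s$ is supported on all of $H_N$, and for $k$ and $j$ both close to the cutoff the frequencies $k\pm j$ exceed $N$ and are killed by $\Pi_N$, so the two quadratic forms differ precisely in the high-frequency regime your spectral bound must control. This is not a pedantic point: the paper states explicitly that it cannot prove a decomposition like Proposition \ref{prop-operator} for the projected operator \eqref{approx-generator}, and that is the whole reason for the modified scheme with $\Gamma_N=\{|k|\le N/3\}$ and for the structure of its proof --- test against a \emph{fixed} Hermite polynomial $H_{\bm{n}}$, integrate by parts twice so the entire diffusion operator lands on $H_{\bm{n}}$ (where, since $\Lambda\subset\Gamma_{N_i}$ for large $i$, the projections become invisible and the operator is exactly $\L_{N_i}^0$), apply $\L_{N_i}^0H_{\bm{n}}=I_{N_i}-C_{\bm{n}}b_{N_i}H_{\bm{n}}$, and let the divergence of $b_{N_i}$ force $\int\rho_s H_{\bm{n}}\,\d\mu=0$ in the limit. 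That dual argument needs no spectral lower bound and never touches frequencies near the cutoff; yours needs both, and that is exactly where it is incomplete.
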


This paper is organised as follows. In Section 2, we make some preparations concerning the Galerkin approximation for the equations \eqref{vorticity-Euler} and \eqref{Kolmog-eq}, and recall some basic facts of the Hermite polynomials on $H^{-1-}(\T^2)$. Based on these results, we prove the three assertions of Theorem \ref{main-result} in Section 3. Theorem \ref{main-result-2} will be proved in Section 4, thanks to a decomposition of the approximation operator. In the appendices, we show the convergence of the Galerkin approximation for the nonlinear part in \eqref{generator}, as well as the convergence of one part in the decomposition of the approximation operator, which is similar to a renormalization argument.

\section{Some preparations} \label{sec-galerkin}

\subsection{The finite dimensional approximation of $\L$}\label{sec-app-operator}

Let $\Lambda\subset \Z^2_0$ be a finite subset. We denote by $H_\Lambda = {\rm span} \{e_k: k\in \Lambda\} \subset L^2(\T^2,\d x)$ and define the projection operator $\Pi_\Lambda: L^2(\T^2,\d x) \to H_\Lambda$ as
  $$\Pi_\Lambda f = \sum_{k\in \Lambda} \<f, e_k\> e_k,\quad f\in L^2(\T^2,\d x),$$
where $\<\cdot, \cdot\>$ is the inner product of $L^2(\T^2,\d x)$. $H_\Lambda$ is isomorphic to the Euclidean space $\R^\Lambda$, and $\Pi_\Lambda$ can be extended to the whole $H^{-1-}(\T^2)$:
  \begin{equation}\label{projection-1}
  \Pi_\Lambda \omega = \sum_{k\in \Lambda} \<\omega, e_k\> e_k, \quad \omega\in H^{-1-}(\T^2),
  \end{equation}
in which $\<\cdot, \cdot\>$ is now the duality between distributions and smooth functions. Let $\Lambda_N = \{k\in \Z^2_0: |k_1|\vee |k_2|\leq N\}$ for some $N\in \N$, we simply write $H_N= H_{\Lambda_N}$ and $\Pi_N= \Pi_{\Lambda_N}$. Using these notations, we can rewrite the cylindrical functions \eqref{cylinder-funct} as
  \begin{equation}\label{cylinder-funct-1}
  \mathcal{FC}_P:= \big\{F: H^{-1-} \to \R\ |\, \exists\, \Lambda\Subset \Z^2_0 \mbox{ and } f\in C_P^\infty (H_\Lambda) \mbox{ s.t. } F= f\circ\Pi_\Lambda\big\}.
  \end{equation}

Although we work in the framework of real-valued functions, it is sometimes easier to do computations by using the complex basis below:
  $$\tilde e_k(x)= {\rm e}^{2\pi {\rm i} k \cdot x},\quad x\in \T^2,\, k\in \Z^2_0.$$
The family $\{\tilde e_k: k\in \Z^2_0\}$ is a complete orthonormal system of $H^{\mathbb C}= L^2_0(\T^2, \mathbb C)$, the space of square integrable functions with zero average. The following identities are very useful:
  \begin{equation}\label{properties-basis}
  \tilde e_k(x) \tilde e_l(x)= \tilde e_{k+l}(x),\quad \overline{\tilde e_k(x)}= \tilde e_{-k}(x), \quad (\tilde e_k\ast \tilde e_l)(x) = \delta_{k,l}\, \tilde e_k(x),
  \end{equation}
where $\ast$ means the convolution. Moreover, we have the well known relations below:
  \begin{equation}\label{real-basis.1}
  e_k(x) = \begin{cases}
  \frac{\sqrt 2}2 (\tilde e_k(x) + \tilde e_{-k}(x)), & k\in \Z^2_+, \\
  \frac{\sqrt 2}{2i} (\tilde e_k(x) - \tilde e_{-k}(x)), & k\in \Z^2_-.
  \end{cases}
  \end{equation}

For $N\in \N$, set
  $$H_N^{\mathbb C} = {\rm span} \{\tilde e_k: k\in \Lambda_N\}, $$
which is a subspace of $H^{\mathbb C}$. The space $H_N$ defined above is the subspace of $H_N^{\mathbb C}$ consisting of real-valued elements: $\xi = \sum_{k\in \Lambda_N} \xi_k \tilde e_k \in H_N$ if and only if $\overline{\xi_k} = \xi_{-k}$ for all $k\in \Lambda_N$. Note that
  \begin{equation}\label{sec-2.1}
  \overline{\<\omega, \tilde e_k\>} = \big\<\omega, \overline{\tilde e_k}\, \big\> = \<\omega, \tilde e_{-k}\>,
  \end{equation}
thus it is not difficult to show that the projection $\Pi_N = \Pi_{\Lambda_N}$ can also be written as
  \begin{equation}\label{sec-2.2}
  \Pi_N \omega = \sum_{k\in \Lambda_N} \<\omega, \tilde e_k\> \tilde e_k.
  \end{equation}

Now we project the drift term $u(\omega)\cdot \nabla\omega$ in \eqref{vorticity-Euler} as follows:
  $$b_N(\omega):=\Pi_N \big(u(\Pi_N \omega) \cdot \nabla (\Pi_N \omega) \big),\quad \omega\in H^{-1-},$$
where $u(\Pi_N \omega)$ is obtained by replacing $\omega$ in \eqref{BS-law} with $\Pi_N \omega$. We shall consider $b_N$ as a vector field on $H_N$ whose generic element is denoted by $\xi= \sum_{k\in \Lambda_N} \xi_k \tilde e_k$. Thus
  $$b_N(\xi)= \Pi_N \big(u(\xi) \cdot \nabla \xi\big), \quad \xi\in H_N.$$
Analogously, we define the projection of the diffusion coefficient $\sigma_k\cdot \nabla \omega$ in \eqref{vorticity-Euler}:
  $$G_N^k(\xi)= \Pi_N\big(\sigma_k\cdot \nabla \xi\big), \quad \xi\in H_N.$$

In order to compute the expression of $b_N$ and $G_N^k$, we need the simple fact
  \begin{equation}\label{thm-app.4}
  \tilde e_k\ast K = 2\pi {\rm i} \frac{k^\perp}{|k|^2} \tilde e_k,
  \end{equation}
where $K$ is the Biot--Savart kernel. Indeed, let $K_N= 2\pi {\rm i} \sum_{k\in \Lambda_N} \frac{k^\perp}{|k|^2} \tilde e_k$ be the approximation of $K$. Since $K$ is integrable, by \cite[Page 217, Theorem 3.5.7]{Loukas}, its Fourier series converge to itself in $L^1(\T^2, \d x)$. Hence by the last equality in \eqref{properties-basis},
  $$\tilde e_k\ast K = \lim_{N\to \infty} \tilde e_k \ast K_N = 2\pi {\rm i} \frac{k^\perp}{|k|^2} \tilde e_k.$$

Recall that for $\xi= \sum_{k\in \Lambda_N} \xi_k \tilde e_k \in H_N$, one has $\overline{\xi_k} = \xi_{-k}$ for all $k\in \Lambda_N$. First, by the Biot--Savart law \eqref{BS-law} and \eqref{thm-app.4},
  \begin{equation}\label{lem-1.1}
  u(\xi)(x)= \int_{\T^2} K(x-y) \bigg(\sum_{l\in \Lambda_N} \xi_l \tilde e_l(y) \bigg) \d y = 2\pi {\rm i} \sum_{l\in \Lambda_N} \xi_l \frac{l^\perp}{|l|^2} \tilde e_l(x).
  \end{equation}
Next,
  \begin{equation}\label{eq-1}
  \nabla\xi(x)= \sum_{k\in \Lambda_N} \xi_k \nabla \tilde e_k(x)= 2\pi {\rm i} \sum_{k\in \Lambda_N} \xi_k \tilde e_k(x) k.
  \end{equation}
Combining the above two identities yields
  $$\big(u(\xi) \cdot \nabla \xi\big)(x)= -4\pi^2 \sum_{k,l\in \Lambda_N} \xi_k \xi_l \frac{k\cdot l^\perp}{|l|^2} \tilde e_{k+l}(x) .$$
It is easy to check that $\big(u(\xi) \cdot \nabla \xi\big)(x)$ is real-valued. Moreover,
  \begin{equation}\label{drift-approx}
  \aligned
  b_N(\xi) &= -4\pi^2 \sum_{k,l\in \Lambda_N} \textbf{1}_{\Lambda_N}(k+l) \xi_k \xi_l \frac{k\cdot l^\perp}{|l|^2} \tilde e_{k+l} \\
  &= -4\pi^2 \sum_{j\in \Lambda_N} \bigg[\sum_{l\in \Lambda_N} \textbf{1}_{\Lambda_N}(j-l) \xi_l \xi_{j-l} \frac{j\cdot l^\perp}{|l|^2} \bigg] \tilde e_j ,
  \endaligned
  \end{equation}
where $\textbf{1}_{\Lambda_N}$ is the indicator function. For $j\in \Lambda_N$, the $j$-th component of $b_N(\xi)$ is
  \begin{equation}\label{drift-approx.1}
  (b_N(\xi))_j= -4\pi^2 \sum_{l\in \Lambda_N} \textbf{1}_{\Lambda_N}(j-l) \xi_l \xi_{j-l} \frac{j\cdot l^\perp}{|l|^2} .
  \end{equation}

We turn to the diffusion coefficients $G_N^k$. By \eqref{vector-fields} and \eqref{real-basis.1}, we have, for $k\in \Z^2_+$,
  \begin{equation}\label{diffusion.1}
  \sigma_k(x) = \frac{k^\perp}{\sqrt{2}\, |k|^\gamma} (\tilde e_k(x) + \tilde e_{-k}(x)), \quad x\in \T^2.
  \end{equation}
Combining this with \eqref{eq-1} leads to
  $$(\sigma_k\cdot \nabla \xi)(x)= \sqrt{2} \pi {\rm i} \sum_{l\in \Lambda_N} \xi_l \frac{k^\perp \cdot l}{|k|^\gamma} ( \tilde e_{k+l}(x) + \tilde e_{-k+l}(x)).$$
Therefore, for $k\in \Z^2_+$,
  \begin{equation}\label{diffusion-approx}
  \aligned
  G_N^k(\xi) &= \sqrt{2} \pi {\rm i} \sum_{l\in \Lambda_N} \xi_l \frac{k^\perp \cdot l}{|k|^\gamma} \big[ {\bf 1}_{\Lambda_N}(k+l) \tilde e_{k+l} + {\bf 1}_{\Lambda_N}(-k+l) \tilde e_{-k+l} \big] \\
  & = \sqrt{2} \pi {\rm i} \sum_{j\in \Lambda_N} \frac{k^\perp \cdot j}{|k|^\gamma} \big[ {\bf 1}_{\Lambda_N}(j-k) \xi_{j-k} + {\bf 1}_{\Lambda_N}(j+k) \xi_{j+k} \big] \tilde e_j.
  \endaligned
  \end{equation}
Similarly, for $k\in \Z^2_-$,
  \begin{equation}\label{diffusion-approx.1}
  \aligned
  G_N^k(\xi) &= \sqrt{2} \pi \sum_{l\in \Lambda_N} \xi_l \frac{k^\perp \cdot l}{|k|^\gamma} \big[ {\bf 1}_{\Lambda_N}(k+l) \tilde e_{k+l} - {\bf 1}_{\Lambda_N}(-k+l) \tilde e_{-k+l} \big] \\
  & = \sqrt{2} \pi \sum_{j\in \Lambda_N} \frac{k^\perp \cdot j}{|k|^\gamma} \big[ {\bf 1}_{\Lambda_N}(j-k) \xi_{j-k} - {\bf 1}_{\Lambda_N}(j+k) \xi_{j+k} \big] \tilde e_j.
  \endaligned
  \end{equation}
From the above two equalities we see that $G_N^k(\xi)$ is linear in $\xi$.

Before concluding this part, we introduce the finite dimensional approximation $\L_N$ of the operator $\L$ given in \eqref{generator}:
  \begin{equation}\label{approx-generator}
  \L_N \phi(\xi)= \frac12 \sum_{|k|\leq N} \Big\<G_N^k, \nabla_N \big\<G_N^k, \nabla_N \phi\big\>_{H_N} \Big\>_{\! H_N}(\xi) - \<b_N, \nabla_N \phi\>_{H_N}(\xi),
  \end{equation}
where $\nabla_N$ is the gradient operator on $H_N$, and the inner product $\<\cdot, \cdot\>_{H_N}$ is induced from $L^2(\T^2,\d x)$.

\subsection{The stochastic flow on $H_N$}\label{sec-flow}

Let $\big\{(W^k_t)_{t\geq 0}: k\in \Z^2_0 \big\}$ be a family of independent standard Brownian motions defined on some filtered probability space $\big(\Theta, \mathcal F, (\mathcal F_t)_{t\geq 0}, \P \big)$. We consider the following SDE on $H_N$:
  \begin{equation}\label{finite-dim-SDE}
  \d \omega^N_t = b_N\big(\omega^N_t \big)\,\d t + \sum_{|k| \leq N} G_N^k \big(\omega^N_t \big)\circ \d W^k_t,\quad \omega^N_0 = \xi \in H_N.
  \end{equation}
Note that the generator of $\omega^N_t$ is the formal adjoint operator of $\L_N$ defined in \eqref{approx-generator} (since $b_N$ is divergence free, cf. Proposition \ref{lem-divergence} below). All the results in this section remain valid if we replace $b_N$ by $-b_N$ in the equation \eqref{finite-dim-SDE}. Here we consider this form for the sake of application in the next subsection.

Since the vector fields $b_N$ and $G_N^k$ are smooth (see their expressions \eqref{drift-approx} and \eqref{diffusion-approx}), the equation \eqref{finite-dim-SDE} has a unique strong solution up to some life time $\zeta$. We will show that the solution is non-explosive. To this end, we first prove

\begin{lemma}\label{lem-1}
It holds that
  $$\<b_N(\xi), \xi\>_{H_N}= \big\<G_N^k(\xi), \xi \big\>_{H_N} =0 \quad \mbox{for all } \xi\in H_N,\, k\in \Lambda_N.$$
\end{lemma}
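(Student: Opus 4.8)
The plan is to reduce both identities to the conservation of enstrophy, i.e. to an integration by parts that exploits the fact that the transporting velocity fields are divergence free on $\T^2$. The essential observation is that, since the inner product $\<\cdot,\cdot\>_{H_N}$ is the one induced from $L^2(\T^2,\d x)$ and $\Pi_N$ is the associated orthogonal projection, for any $v\in L^2(\T^2,\d x)$ and any $\xi\in H_N$ one has $\<\Pi_N v,\xi\>_{H_N}=\<v,\xi\>_{L^2}$. Hence the projection $\Pi_N$ appearing in the definitions of $b_N$ and $G_N^k$ may be discarded once it is paired against an element of $H_N$, and the two quantities to be computed become genuine $L^2(\T^2)$ integrals over the torus.

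For the drift term, first I would note that for $\xi\in H_N$ (a trigonometric polynomial) the product $u(\xi)\cdot\nabla\xi$ lies in $L^2(\T^2,\d x)$, so $b_N(\xi)=\Pi_N(u(\xi)\cdot\nabla\xi)$ is well defined and, by the observation above,
\[
\<b_N(\xi),\xi\>_{H_N}=\<u(\xi)\cdot\nabla\xi,\xi\>_{L^2}=\int_{\T^2}\big(u(\xi)\cdot\nabla\xi\big)\,\xi\,\d x .
\]
Writing $\big(u(\xi)\cdot\nabla\xi\big)\xi=u(\xi)\cdot\nabla(\xi^2/2)$ and integrating by parts on the torus (no boundary terms), this equals $-\tfrac12\int_{\T^2}(\div u(\xi))\,\xi^2\,\d x$, which vanishes because the Biot--Savart velocity is divergence free. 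The latter can be read off directly from \eqref{lem-1.1} together with \eqref{eq-1}: $\div u(\xi)=-4\pi^2\sum_{l\in\Lambda_N}\xi_l\frac{l^\perp\cdot l}{|l|^2}\,\tilde e_l=0$ since $l^\perp\cdot l=0$.

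The diffusion term is handled in exactly the same way. By the same projection identity, $\<G_N^k(\xi),\xi\>_{H_N}=\<\sigma_k\cdot\nabla\xi,\xi\>_{L^2}=-\tfrac12\int_{\T^2}(\div\sigma_k)\,\xi^2\,\d x$, and $\sigma_k$ is divergence free because, by \eqref{vector-fields}, it is a scalar multiple of $k^\perp$ times a function of $k\cdot x$, so that $\div\sigma_k\propto k^\perp\cdot k=0$. This gives $\<G_N^k(\xi),\xi\>_{H_N}=0$ for every $k\in\Lambda_N$.

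I do not expect a real obstacle here; the only points requiring care are (i) checking that $u(\xi)\cdot\nabla\xi$ and $\sigma_k\cdot\nabla\xi$ are square integrable, so that the projection and the $L^2$ pairing are legitimate — which is immediate since $\xi$ is a trigonometric polynomial — and (ii) justifying the passage of $\Pi_N$ through the pairing, i.e. the self-adjointness of the orthogonal projection together with $\Pi_N\xi=\xi$. An alternative, purely algebraic proof is available by inserting the explicit Fourier expressions \eqref{drift-approx} and \eqref{diffusion-approx} into the sums $\sum_{j}(b_N(\xi))_j\,\overline{\xi_j}$ and $\sum_j (G_N^k(\xi))_j\,\overline{\xi_j}$ and exhibiting the antisymmetry of the summand under a suitable relabelling of the summation indices; I would keep the integration-by-parts argument as the main line, since it makes the conservation-of-enstrophy mechanism transparent, and mention the direct computation only as a check.
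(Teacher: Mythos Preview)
Your proof is correct and follows essentially the same approach as the paper: drop the projection $\Pi_N$ using orthogonality against $\xi\in H_N$, then use integration by parts together with the divergence-free property of $u(\xi)$ and $\sigma_k$ to conclude that the $L^2$ pairing vanishes. The paper's version is slightly terser (it writes $\langle u(\xi)\cdot\nabla\xi,\xi\rangle=-\langle\xi,u(\xi)\cdot\nabla\xi\rangle$ directly rather than passing through $\nabla(\xi^2/2)$), but the argument is the same.
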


\begin{proof}
By \eqref{lem-1.1}, $u(\xi)$ is a smooth divergence free vector field on $\T^2$. We have
  $$\<b_N(\xi), \xi\>_{H_N} = \big\<\Pi_N \big(u(\xi) \cdot \nabla \xi\big), \xi\big\>_{H_N} = \big\<u(\xi) \cdot \nabla \xi, \xi\big\>_{H_N} = - \big\<\xi, u(\xi) \cdot \nabla \xi\big\>_{H_N}, $$
where the last step is due to integration by parts. This implies $\<b_N(\xi), \xi\>_{H_N} =0$. Similarly we obtain the second assertion since $\sigma_k$ is divergence free.
\end{proof}

The next result shows that the trajectories of $\omega^N_t$ stay on the sphere centered at the origin.

\begin{lemma}\label{lem-energy}
A.s., for all $t>0$,
  \begin{equation}\label{energy-conservation}
  \big|\omega^N_t \big|_{H_N} = |\xi |_{H_N}.
  \end{equation}
\end{lemma}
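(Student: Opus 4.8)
The plan is to show that the squared norm $|\omega^N_t|_{H_N}^2 = \langle \omega^N_t, \omega^N_t\rangle_{H_N}$ is conserved by applying the It\^o formula in its Stratonovich form. Since the SDE \eqref{finite-dim-SDE} is written with Stratonovich differentials, the ordinary chain rule applies: setting $\Phi(\xi) = |\xi|_{H_N}^2$, we have $\nabla_N \Phi(\xi) = 2\xi$, so that
\begin{equation*}
\d \big|\omega^N_t\big|_{H_N}^2 = 2\big\< b_N\big(\omega^N_t\big), \omega^N_t \big\>_{H_N}\,\d t + 2\sum_{|k|\leq N} \big\< G_N^k\big(\omega^N_t\big), \omega^N_t\big\>_{H_N}\circ \d W^k_t.
\end{equation*}
By Lemma \ref{lem-1}, every coefficient on the right-hand side vanishes identically, hence $|\omega^N_t|_{H_N}^2$ is almost surely constant in $t$ and equal to its initial value $|\xi|_{H_N}^2$. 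Taking square roots gives \eqref{energy-conservation}.

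The reason I would favour the Stratonovich computation over the It\^o one is that it avoids having to control the second-order (It\^o correction) terms explicitly: in Stratonovich form the conservation is immediate once Lemma \ref{lem-1} is invoked, whereas the It\^o formulation would require showing that the drift correction $\frac12 \sum_{|k|\leq N} |G_N^k(\omega^N_t)|_{H_N}^2$ exactly cancels against the correction generated by converting the Stratonovich integral back to It\^o form. Both routes work and are equivalent, but the Stratonovich route makes the geometric meaning transparent: since $b_N$ and each $G_N^k$ are everywhere tangent to the spheres centered at the origin (this is precisely the content of Lemma \ref{lem-1}), the flow stays on the sphere through $\xi$.

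A subtle point I would address is the legitimacy of applying the Stratonovich chain rule up to the potential explosion time $\zeta$: a priori the solution is only known to be local. I would therefore first argue on $[0,\zeta)$ that the norm is constant, which shows the solution remains in the bounded set $\{|\xi'|_{H_N} = |\xi|_{H_N}\}$; since $b_N$ and $G_N^k$ are smooth (indeed polynomial and linear, respectively, by \eqref{drift-approx} and \eqref{diffusion-approx}) and hence bounded on this compact sphere, the solution cannot leave a compact set in finite time, so $\zeta = +\infty$ almost surely. This gives the non-explosion claim announced just before the lemma as a by-product. The main (and only genuine) obstacle is thus the careful handling of the local-solution issue; the algebraic heart of the argument is entirely supplied by Lemma \ref{lem-1}, so no further computation with the explicit expressions of $b_N$ and $G_N^k$ is needed.
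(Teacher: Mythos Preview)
Your proposal is correct and follows essentially the same route as the paper: conservation of $|\omega^N_t|_{H_N}^2$ via Lemma~\ref{lem-1}, together with a localization step to cope with the a-priori local nature of the solution. The paper phrases the computation in It\^o/generator form (observing $\L_N^\ast(|\cdot|_{H_N}^2)=0$ and $\langle G_N^k,\nabla_N|\cdot|_{H_N}^2\rangle=0$) and makes the localization explicit via the stopping times $\tau_R=\inf\{t>0:|\omega^N_t|_{H_N}>R\}$, but this is exactly the argument you sketch in Stratonovich language.
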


\begin{proof}
By Lemma \ref{lem-1}, it is clear that
  $$\L_N^\ast \big(|\cdot|_{H_N}^2 \big)(\xi)=0 \quad \mbox{for all } \xi \in H_N.$$
Let $R>0$ be fixed; define the stopping time $\tau_R = \inf\big\{t>0: \big|\omega^N_t\big|_{H_N} >R \big\}$. The It\^o formula and Lemma \ref{lem-1} lead to
  $$\d \big(\big|\omega^N_{t\wedge \tau_R} \big|_{H_N}^2 \big)= \sum_{|k| \leq N} \big\<G_N^k, \nabla_N \big(|\cdot|_{H_N}^2 \big) \big\> \big(\omega^N_{t\wedge \tau_R} \big) \,\d W^k_t + \L_N^\ast \big(|\cdot|_{H_N}^2 \big) \big(\omega^N_{t\wedge \tau_R} \big) \,\d t =0.$$
Therefore, for any $t>0$ and $R>0$.
  $$\big|\omega^N_{t\wedge \tau_R} \big|_{H_N}^2 = |\xi|_{H_N}^2 \quad \mbox{a.s.}$$
Letting $R\to \infty$ yields that, for any $t>0$,
  $$\big|\omega^N_t \big|_{H_N} = |\xi|_{H_N} \quad \mbox{a.s.}$$
Since $\omega^N_t$ has continuous trajectories, we obtain the desired assertion.
\end{proof}

\begin{proposition}\label{prop-flow}
The SDE \eqref{finite-dim-SDE} generates a stochastic flow $\big(\Phi^N_t \big)_{t\geq 0}$ of diffeomorphisms which preserve the norm of $H_N$.
\end{proposition}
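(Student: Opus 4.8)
The plan is to deduce the proposition from the classical theory of stochastic flows of diffeomorphisms, after circumventing the one obstruction present here: the coefficients $b_N$ and $G_N^k$, being polynomial in $\xi$ (see \eqref{drift-approx} and \eqref{diffusion-approx}), are $C^\infty$ but have unbounded derivatives on $H_N$, so the standard flow theorems do not apply verbatim. The a priori bound of Lemma \ref{lem-energy}, which confines every trajectory to the sphere through its starting point, is precisely what allows us to localize and reduce to the case of globally bounded coefficients.

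First I would carry out a cutoff. For each $R>0$ fix a function $\chi_R \in C_c^\infty(H_N)$ with $0 \le \chi_R \le 1$, $\chi_R \equiv 1$ on $\{|\xi|_{H_N} \le R\}$ and $\chi_R \equiv 0$ on $\{|\xi|_{H_N} \ge 2R\}$, and set $b_N^R = \chi_R\, b_N$ and $G_N^{k,R} = \chi_R\, G_N^k$. These truncated coefficients are smooth with bounded derivatives of every order, so by Kunita's theorem the SDE with coefficients $(b_N^R, G_N^{k,R})$ generates a stochastic flow $(\Phi_t^{N,R})_{t\ge 0}$ of $C^\infty$-diffeomorphisms of $H_N$. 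Inside the ball of radius $R$ the truncated equation agrees with \eqref{finite-dim-SDE}, and by Lemma \ref{lem-energy} a solution of \eqref{finite-dim-SDE} issued from $\xi$ with $|\xi|_{H_N} < R$ never leaves that ball; a pathwise uniqueness argument then gives $\Phi^{N,R}_t(\xi) = \Phi^{N,R'}_t(\xi)$ for all $R' > R$ and all $t \ge 0$. Letting $R \to \infty$, these maps glue consistently into a single flow $(\Phi_t^N)_{t\ge 0}$ that solves \eqref{finite-dim-SDE} and inherits the diffeomorphism property on each ball, hence on all of $H_N$.

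It remains to verify that norm preservation holds as a property of the flow, i.e. simultaneously for all starting points on one event of full probability. For each fixed $\xi$, Lemma \ref{lem-energy} provides a full-measure event $\Omega_\xi$ on which $|\Phi_t^N(\xi)|_{H_N} = |\xi|_{H_N}$ for all $t \ge 0$. Fix a countable dense set $D \subset H_N$ and put $\Omega_0 = \bigcap_{\xi \in D} \Omega_\xi$, which still has full probability; on $\Omega_0$ the identity holds for every $\xi \in D$ and every $t \ge 0$. Since $(t,\xi) \mapsto \Phi_t^N(\xi)$ is jointly continuous and the norm is continuous, both sides are continuous in $(t,\xi)$, so on $\Omega_0$ the identity propagates from the dense set $D$ to all of $H_N$. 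This shows that $(\Phi_t^N)$ preserves the $H_N$-norm.

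The only genuine difficulty is the unbounded growth of the coefficients, which is exactly why the flow theorems cannot be invoked directly; the energy identity of Lemma \ref{lem-energy} removes it by rendering the relevant dynamics effectively compactly supported. Everything else—existence of the flow for the truncated coefficients, consistency under the cutoff, and the density–continuity extension—is routine.
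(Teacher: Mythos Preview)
Your proposal is correct and follows essentially the same route as the paper: truncate the polynomial coefficients to $C_b^\infty$ ones agreeing on balls, invoke the classical flow-of-diffeomorphisms theorem for each truncation, and use Lemma~\ref{lem-energy} together with pathwise uniqueness to glue the truncated flows into a global one. Your argument is in fact slightly more careful than the paper's in that you explicitly upgrade the pointwise norm identity of Lemma~\ref{lem-energy} to a statement holding simultaneously for all initial data on one full-measure set, via density and joint continuity of the flow.
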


\begin{proof}
By Lemma \ref{lem-energy}, for any $\xi\in H_N$, \eqref{finite-dim-SDE} has a unique strong solution $\omega^N_t$ which stays on the sphere $\{\eta\in H_N: |\eta|_{H_N}= |\xi|_{H_N}\}$. We denote it by $\Phi^N_t(\xi)$.

For any $n\in \N$, we take $b_N^{(n)}, G_N^{k, (n)} \in C_b^\infty(H_N, H_N)$ such that
  \begin{equation}\label{prop-flow.1}
  b_N^{(n)}(\xi) = b_N(\xi), \quad G_N^{k, (n)}(\xi) = G_N^k(\xi), \quad \xi \in B_n(H_N),
  \end{equation}
where $B_n(H_N)$ is the ball in $H_N$ with radius $n$ and centered at the origin. Consider the SDE
  \begin{equation}\label{prop-flow.1.5}
  \d \eta^{(n)}_t = b_N^{(n)}\big(\eta^{(n)}_t \big)\,\d t + \sum_{|k|\leq N} G_N^{k, (n)} \big(\eta^{(n)}_t \big)\circ \d W^k_t,\quad \eta^{(n)}_0 = \xi \in H_N.
  \end{equation}
It is well known that the above equation generates a stochastic flow of diffeomorphisms $\Phi^{(n)}_t$ on $H_N$. That is, there exists a measurable set $\Theta_n \subset \Theta$ of probability 1, such that for all $\theta\in \Theta_n$, $\Phi^{(n)}_t(\cdot, \theta)$ is a diffeomorphism on $H_N$ for all $t>0$. We denote by $\Theta_\infty = \cap_{n\geq 1} \Theta_n$ which is again a full measure set.

By the pathwise uniqueness of \eqref{finite-dim-SDE}, for all $\xi \in B_n(H_N)$, and for all $m>n$,
  \begin{equation}\label{prop-flow.2}
  \Phi^N_t(\xi) = \Phi^{(m)}_t(\xi) \quad \mbox{a.s. for all } t>0.
  \end{equation}
Therefore, we redefine the stochastic flow $\Phi^N_t$ as follows: for all $\theta \in \Theta_\infty$ and $n\geq 1$,
  $$\Phi^N_t(\cdot,\theta)|_{B_n(H_N)} = \Phi^{(n)}_t(\cdot,\theta)|_{B_n(H_N)}, \quad t>0.$$
By \eqref{prop-flow.2}, this definition is consistent and it gives us the desired flow of diffeomorphisms.
\end{proof}

Recall that $\mu$ is the enstrophy measure which is supported by $H^{-1-}(\T^2)$. Denote by $\mu_N= (\Pi_N)_\# \mu = \mu \circ \Pi_N^{-1}$ the induced standard Gaussian measure on $H_N$.

\begin{proposition}\label{lem-divergence}
A.s., for all $t>0$, the standard Gaussian measure $\mu_N$ is invariant under the stochastic flow $\Phi^N_t$ of diffeomorphisms on $H_N$.
\end{proposition}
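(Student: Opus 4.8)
The plan is to prove the invariance geometrically, by combining volume preservation with the norm preservation already established in Proposition \ref{prop-flow}, rather than going through the adjoint generator. The key observation is that $\mu_N$ has a density with respect to Lebesgue measure on $H_N \cong \R^{\#\Lambda_N}$ depending only on the norm, namely $\d\mu_N = Z^{-1}\exp\big(-|\xi|_{H_N}^2/2\big)\,\d\xi$. Since $\Phi^N_t$ preserves $|\cdot|_{H_N}$, it therefore suffices to show that $\Phi^N_t$ also preserves Lebesgue measure: a radially symmetric density is then automatically transported to itself.

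First I would verify that the coefficient vector fields are divergence free on $H_N$ in the Euclidean sense. For the diffusion fields this is immediate from the structure: each $G_N^k$ is linear in $\xi$ (noted after \eqref{diffusion-approx.1}), say $G_N^k(\xi) = A_k\xi$, and Lemma \ref{lem-1} gives $\big\<A_k\xi,\xi\big\>_{H_N} = 0$ for all $\xi$, so the symmetric part of $A_k$ vanishes, $A_k$ is skew-symmetric, and hence $\div G_N^k = {\rm tr}\,A_k = 0$. For the drift I would read off $\div b_N = 0$ from the explicit expression \eqref{drift-approx.1}: a diagonal contribution to $\partial_{\xi_j}(b_N)_j$ can arise only when the differentiation hits the factor $\xi_l$ with $l=j$, which carries the vanishing coefficient $j\cdot j^\perp/|j|^2 = 0$, or the factor $\xi_{j-l}$ with $j-l=j$, i.e. $l=0\notin\Lambda_N$; both are absent, so $\partial_{\xi_j}(b_N)_j = 0$ for every $j$. (Passing between the complex coordinates of \eqref{drift-approx.1} and the real basis $\{e_k\}$ is an orthogonal change of variables via \eqref{real-basis.1}, under which the trace of the Jacobian, hence the vanishing of the divergence, is preserved.)

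Next I would invoke the stochastic Liouville theorem for Stratonovich equations: the Jacobian $J_t = \det D\Phi^N_t$ of the flow of \eqref{finite-dim-SDE} obeys
$$\d J_t = J_t\Big(\div b_N\,\d t + \sum_{|k|\leq N}\div G_N^k \circ \d W^k_t\Big),$$
which by the previous step gives $\d J_t = 0$, so $J_t \equiv 1$ and $(\Phi^N_t)_\#\Leb = \Leb$. Because the coefficients are only polynomial rather than globally Lipschitz, I would run this computation on the truncated equations \eqref{prop-flow.1.5}, whose coefficients are smooth, bounded, and share the same divergences on each ball $B_n(H_N)$, and then transfer volume preservation to $\Phi^N_t$ itself using the identification \eqref{prop-flow.2} together with Lemma \ref{lem-energy}, which confines each trajectory to a fixed sphere.

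Finally I would combine the two invariances: for bounded measurable $\phi$, the change of variables $\eta = \Phi^N_t(\xi)$ has unit Jacobian, and $|\Phi^N_t(\xi)|_{H_N} = |\xi|_{H_N}$ forces $\rho_N\big((\Phi^N_t)^{-1}\eta\big) = \rho_N(\eta)$ for the Gaussian density $\rho_N$, so that $\int \phi\,\d\big((\Phi^N_t)_\#\mu_N\big) = \int\phi\,\d\mu_N$, which is the asserted invariance. The main obstacle I expect is the rigorous justification of the stochastic Liouville formula for these non-globally-Lipschitz coefficients, that is, the localization through the truncated flows; the algebraic divergence-free computation, while requiring some care with the complex-versus-real coordinates, is routine.
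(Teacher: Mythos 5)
Your proof is correct, and it reaches the conclusion by a different mechanism than the paper, although both ultimately rest on the same two algebraic facts. The paper cites the criterion (2.2) of \cite{FLT}: the flow of a Stratonovich SDE preserves $\mu_N$ provided all coefficient fields have vanishing Gaussian divergence, $\div_{\mu_N}(b_N)=\div_{\mu_N}\big(G_N^k\big)=0$; it then decomposes $\div_{\mu_N}(v)(\xi)=\<v(\xi),\xi\>_{H_N}-\div_N(v)(\xi)$, killing the first term by Lemma \ref{lem-1} and the second by inspection of \eqref{drift-approx.1} (and of \eqref{diffusion-approx}, whose $j$-th component involves only $\xi_{j\pm k}$ with $k\neq 0$). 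You instead decompose the invariance itself: Euclidean divergence-freeness plus the stochastic Liouville formula gives preservation of Lebesgue measure, while Lemma \ref{lem-energy} gives preservation of the norm, and the two together transport the radial Gaussian density to itself. So where the paper adds the two vanishing quantities to form the Gaussian divergence and black-boxes the passage from divergence to invariance into the \cite{FLT} citation, you use the same two quantities separately and replace that citation by the classical Jacobian identity for Stratonovich flows (Kunita); your route is more self-contained, and your proof that $\div_N G_N^k=0$ --- skew-symmetry of $A_k$ forced by linearity together with $\<A_k\xi,\xi\>_{H_N}=0$ --- is cleaner than a coordinate inspection. Both arguments need the same truncation/localization step, which you handle correctly by noting that sphere-confinement (Lemma \ref{lem-energy}) keeps trajectories in the region where the truncated and original coefficients, hence their divergences, agree. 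Both also share the same mild gloss concerning divergence computations in the complex coordinates $\xi_k$ (constrained by $\overline{\xi_k}=\xi_{-k}$), which you at least flag explicitly; since $b_N$ and $G_N^k$ are polynomial maps in which no conjugates appear, the complexified trace equals the real trace and the computation is legitimate.
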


\begin{proof}
By \cite[(2.2)]{FLT} (with a truncation argument as in \eqref{prop-flow.1}), it is sufficient to show that
  \begin{equation}\label{lem-divergence.1}
  \div_{\mu_N}(b_N) = \div_{\mu_N}\big(G_N^k \big) =0,
  \end{equation}
where $\div_{\mu_N}$ is the divergence operator on $H_N$ w.r.t. the Gaussian measure $\mu_N$.  Note that
  $$\div_{\mu_N}(b_N)(\xi) = \<b_N(\xi), \xi\>_{H_N}- \div_N(b_N)(\xi),$$
where $\div_N$ is the ordinary divergence operator on $H_N$. By Lemma \ref{lem-1}, it is sufficient to prove that $\div_N(b_N)(\xi) =0$ for all $\xi\in H_N$. The latter is obvious from the expression \eqref{drift-approx.1} of $(b_N(\xi))_j$, which does not contain  $\xi_j$. Similarly, we have $\div_{\mu_N} \big(G_N^k \big)(\xi) =0$.
\end{proof}

\subsection{Forward Kolmogorov equation on $H_N$}\label{sec-finite-Kol-eq}

We consider the Kolmogorov equation on $H_N$:
  \begin{equation}\label{Kolmogorov-eq}
  \partial_t \rho^N_t = \L_N^\ast \rho^N_t, \quad \rho^N|_{t=0}= \rho^N_0\in C_P^\infty(H_N),
  \end{equation}
where $\L_N^\ast$ is the formal adjoint operator of that defined in \eqref{approx-generator}, and it is the generator associated to the SDE \eqref{finite-dim-SDE}. Recall that $C_P^\infty(H_N)$ is the collection of smooth functions having polynomial growth together with all the derivatives. We have the following simple result.

\begin{lemma}\label{lem-probab-repres}
The equation \eqref{Kolmogorov-eq} has a smooth solution $\big(\rho^N_t \big)_{0\leq t\leq T}$ with the probabilistic representation
  \begin{equation}\label{probab-repres}
  \rho^N_t (\xi) = \E\big[ \rho^N_0 \big(\omega^N_t \big)\big]= \E\big[ \rho^N_0 \big(\Phi^N_t(\xi) \big)\big], \quad \xi\in H_N.
  \end{equation}
\end{lemma}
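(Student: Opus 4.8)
The plan is to identify $\rho^N_t$ with $P^N_t \rho^N_0$, where $P^N_t \phi(\xi) := \E\big[\phi\big(\Phi^N_t(\xi)\big)\big]$ is the Markov semigroup associated with the flow $\Phi^N_t$ of Proposition \ref{prop-flow}, whose generator is $\L_N^\ast$ by the very construction of the SDE \eqref{finite-dim-SDE}. The representation \eqref{probab-repres} then just reads $\rho^N_t = P^N_t \rho^N_0$, and the substance of the lemma is twofold: that this defines a function in $C_P^\infty(H_N)$, and that it solves \eqref{Kolmogorov-eq}.

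First I would settle well-definedness and a priori bounds. Since $\rho^N_0\in C_P^\infty(H_N)$ has polynomial growth and, by Lemma \ref{lem-energy}, the flow preserves the norm, $\big|\Phi^N_t(\xi)\big|_{H_N}=|\xi|_{H_N}$, the random variable $\rho^N_0\big(\Phi^N_t(\xi)\big)$ is bounded by $\sup_{|\eta|_{H_N}=|\xi|_{H_N}}\big|\rho^N_0(\eta)\big|<+\infty$. Hence $\rho^N_t(\xi)$ is finite for every $\xi$ and $t$. For smoothness I would differentiate under the expectation sign: by Proposition \ref{prop-flow}, $\xi\mapsto \Phi^N_t(\xi)$ is a.s. a diffeomorphism, and inside each ball $B_n(H_N)$ it coincides with the flow of the truncated, bounded-coefficient equation \eqref{prop-flow.1.5}, for which Kunita's theory of stochastic flows supplies smooth dependence on the initial datum together with $L^p$-bounds, uniform on the ball, for the spatial derivatives $\nabla_N \Phi^N_t$ and their higher analogues. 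Combining these moment bounds with the chain rule applied to $\rho^N_0$ and its derivatives (all of polynomial growth), and using again that trajectories stay on a fixed sphere, I would conclude $\rho^N_t\in C_P^\infty(H_N)$, with all derivatives of polynomial growth.

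For the equation itself, I would use the cocycle property of the flow together with the independence of the Brownian increments on $[t,t+h]$ from $\mathcal F_t$ to obtain the semigroup identity $\rho^N_{t+h}(\xi)=\E\big[\rho^N_t\big(\Phi^N_h(\xi)\big)\big]=P^N_h\rho^N_t(\xi)$. Dividing by $h$, letting $h\downarrow 0$, and applying Dynkin's (It\^o's) formula to the smooth function $\rho^N_t$ established in the previous step—legitimate precisely because $\L_N^\ast$ is the generator of \eqref{finite-dim-SDE}, and with the stochastic integral a genuine martingale thanks to the norm conservation of Lemma \ref{lem-energy}—one gets $\partial_t \rho^N_t=\L_N^\ast \rho^N_t$, with the correct initial datum $\rho^N_0$ coming from $\Phi^N_0=\mathrm{id}$.

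I expect the main obstacle to be the smoothness of $\rho^N_t$, since $b_N$ and $G_N^k$ from \eqref{drift-approx} and \eqref{diffusion-approx} are only of polynomial growth, so the textbook flow-derivative estimates for bounded coefficients do not apply directly. The remedy is exactly the localization prepared in Proposition \ref{prop-flow}: by energy conservation every trajectory with $|\xi|_{H_N}<n$ remains on the sphere of radius $|\xi|_{H_N}$, hence never leaves $B_n(H_N)$, so it agrees for all time with the bounded-coefficient flow $\Phi^{(n)}_t$. The uniform-on-$B_n$ moment bounds then transfer to $\rho^N_t$, and since $\L_N^\ast$ and the truncated generator coincide on the interior of $B_n$, letting $n\to\infty$ yields both the global smoothness and the validity of \eqref{Kolmogorov-eq} on all of $H_N$.
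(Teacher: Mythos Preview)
Your proposal is correct and follows essentially the same route as the paper: both arguments hinge on the localization from Proposition \ref{prop-flow}, reducing to the bounded-coefficient truncated flows $\Phi^{(n)}_t$ on each ball $B_n(H_N)$ and using that, by norm conservation, the true flow never leaves the ball. The paper packages this slightly differently---it also truncates the initial datum to $\rho_0^{N,(n)}\in C_b^\infty$, invokes the ``well known'' solvability of the truncated Kolmogorov equation \eqref{lem-probab-repres.1} with its probabilistic representation, and then observes that for $\xi\in B_n(H_N)$ all truncations $m\ge n$ give the same value $\E\big[\rho_0^N(\Phi^N_t(\xi))\big]$---whereas you unpack the ``well known'' step via Kunita's flow-derivative bounds and Dynkin's formula; but the substance is the same.
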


\begin{proof}
For any $n\geq 1$, consider the cut-off functions as in \eqref{prop-flow.1}. Moreover, let $\rho^{N,(n)}_0 \in C_b^\infty(H_N, \R)$ such that
  $$\rho^{N,(n)}_0(\xi)= \rho^{N}_0(\xi) \quad \mbox{for all } \xi \in B_n(H_N).$$
Define the new operator
  $$\L_N^{(n)} \phi(\xi)= \frac12 \sum_{|k|\leq N} \Big\<G_N^{k,(n)}, \nabla_N \big\<G_N^{k,(n)}, \nabla_N \phi\big\>_{H_N} \Big\>_{\! H_N}(\xi) - \big\< b_N^{(n)}, \nabla_N \phi \big\>_{H_N}(\xi)$$
and consider the equation
  \begin{equation}\label{lem-probab-repres.1}
  \partial_t \rho^{N,(n)}_t = \big(\L_N^{(n)} \big)^\ast \rho^{N,(n)}_t, \quad \rho^{N,(n)}|_{t=0}= \rho^{N,(n)}_0.
  \end{equation}
It is well known that this equation has a smooth solution $\big(\rho^{N,(n)}_t \big)_{0\leq t\leq T}$. Recall the solution $\eta^{(n)}_t = \Phi^{(n)}_t (\xi)$ to the cut-off equation \eqref{prop-flow.1.5}. By the It\^o formula, it is easy to show that
  $$\rho^{N,(n)}_t(\xi)= \E \big[ \rho^{N,(n)}_0 \big(\Phi^{(n)}_t (\xi)\big) \big],\quad (t,\xi)\in [0,T]\times H_N.$$

Fix any $n\in \N$. By \eqref{prop-flow.2}, for all $\xi\in B_n(H_N)$ and $m\geq n$,
  $$\rho^{N,(m)}_t(\xi)= \E \big[ \rho^{N,(m)}_0 \big(\Phi^{(m)}_t (\xi)\big) \big] = \E \big[ \rho^{N}_0 \big(\Phi^N_t (\xi)\big) \big],$$
which is independent on $m$. Therefore, by \eqref{lem-probab-repres.1}, the function $\rho^N_t (\xi)= \E \big[ \rho^{N}_0 \big(\Phi^N_t (\xi)\big) \big]$ verifies
  $$\partial_t \rho^N_t = \big(\L_N^{(n)} \big)^\ast \rho^{N,(n)}_t = \L_N^\ast \rho^{N}_t \quad \mbox{on } (t,\xi)\in [0,T]\times B_n(H_N). $$
Since $n$ is arbitrary, we conclude the result.
\end{proof}

We shall establish some estimates on the solution $\rho^N_t$ by using the above representation formula and the equation \eqref{Kolmogorov-eq}.

\begin{lemma}\label{lem-estimate}
For any $p\geq 1$,
  $$\big\|\rho^N \big\|_{L^\infty([0,T], L^p(\mu_N))} \leq \big\|\rho^N_0 \big\|_{L^p(\mu_N)}. $$
Moreover,
  \begin{equation}\label{estimate-2}
  \big\|\rho^N_t \big\|_{L^2(H_N)}^2 + \sum_{|k|\leq N} \int_0^t \!\! \int_{H_N} \big\<G_N^k, \nabla_N \rho^N_s \big\>^2 \,\d\mu_N \d s = \big\|\rho^N_0 \big\|_{L^2(H_N)}^2, \quad t\in [0,T].
  \end{equation}
\end{lemma}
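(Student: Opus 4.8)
The two assertions are of rather different natures, so I would treat them separately and then isolate the single technical point common to both.

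For the $L^p$ contraction I would argue purely probabilistically, exploiting the representation \eqref{probab-repres} together with the measure invariance from Proposition \ref{lem-divergence}. Fix $p\geq 1$ and $t\in[0,T]$. Writing $\rho^N_t(\xi)=\E\big[\rho^N_0(\Phi^N_t(\xi))\big]$ and applying Jensen's inequality to the convex map $r\mapsto|r|^p$ on the probability space $(\Theta,\P)$ gives $|\rho^N_t(\xi)|^p\leq \E\big[|\rho^N_0(\Phi^N_t(\xi))|^p\big]$ pointwise in $\xi$. I would then integrate in $\xi$ against $\mu_N$, exchange the two integrals by Fubini (legitimate since $\rho^N_0\in C_P^\infty(H_N)\subset\cap_q L^q(\mu_N)$ and $\Phi^N_t$ preserves $\mu_N$), and finally use that, for $\P$-a.e.\ path $\theta$, $\Phi^N_t(\cdot,\theta)$ pushes $\mu_N$ forward to itself, so that $\int_{H_N}|\rho^N_0(\Phi^N_t(\xi))|^p\,\d\mu_N(\xi)=\|\rho^N_0\|_{L^p(\mu_N)}^p$ for each fixed $\theta$. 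Taking $p$-th roots and the supremum over $t$ yields $\|\rho^N\|_{L^\infty([0,T],L^p(\mu_N))}\leq\|\rho^N_0\|_{L^p(\mu_N)}$.

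For the energy identity \eqref{estimate-2} I would instead work at the level of the PDE \eqref{Kolmogorov-eq}, reading $\|\cdot\|_{L^2(H_N)}$ as the norm of $L^2(H_N,\mu_N)$. Differentiating in time and using $\partial_t\rho^N_t=\mathcal{L}_N^\ast\rho^N_t$ together with the fact that $\mathcal{L}_N^\ast$ is the $L^2(\mu_N)$-adjoint of $\mathcal{L}_N$, I obtain
\[
\tfrac12\,\tfrac{\d}{\d t}\|\rho^N_t\|_{L^2(\mu_N)}^2=\int_{H_N}\rho^N_t\,\mathcal{L}_N^\ast\rho^N_t\,\d\mu_N=\int_{H_N}(\mathcal{L}_N\rho^N_t)\,\rho^N_t\,\d\mu_N .
\]
The next step is a Gaussian integration by parts whose key structural input is \eqref{lem-divergence.1}, namely $\div_{\mu_N}(b_N)=\div_{\mu_N}(G_N^k)=0$. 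This makes each first-order operator $A_k\phi:=\langle G_N^k,\nabla_N\phi\rangle$ and the drift operator $\phi\mapsto\langle b_N,\nabla_N\phi\rangle$ skew-symmetric on $L^2(\mu_N)$, i.e.\ $\int(A_k\phi)\psi\,\d\mu_N=-\int\phi(A_k\psi)\,\d\mu_N$, and similarly for $b_N$. Writing the diffusion part of $\mathcal{L}_N$ in \eqref{approx-generator} as $\tfrac12\sum_{|k|\leq N}A_k(A_k\rho^N_t)$, skew-symmetry turns $\int\rho^N_t\,A_k(A_k\rho^N_t)\,\d\mu_N$ into $-\int(A_k\rho^N_t)^2\,\d\mu_N$, while the drift contribution $-\int\rho^N_t\langle b_N,\nabla_N\rho^N_t\rangle\,\d\mu_N$ vanishes identically. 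Hence
\[
\tfrac12\,\tfrac{\d}{\d t}\|\rho^N_t\|_{L^2(\mu_N)}^2=-\tfrac12\sum_{|k|\leq N}\int_{H_N}\langle G_N^k,\nabla_N\rho^N_t\rangle^2\,\d\mu_N ,
\]
and integrating from $0$ to $t$ gives exactly \eqref{estimate-2}.

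The main obstacle, and the only point requiring real care, is the rigorous justification of these manipulations in the presence of unbounded coefficients and polynomial-growth solutions against the Gaussian measure $\mu_N$. I must check that the integrands are genuinely integrable (here $G_N^k$ is linear in $\xi$ and $\nabla_N\rho^N_t$ has polynomial growth, so $\langle G_N^k,\nabla_N\rho^N_t\rangle^2\in L^1(\mu_N)$, and similarly for the drift term), that no boundary terms survive at infinity in the Gaussian integration by parts, and that differentiation under the integral sign in $t$ is permitted. I expect to dispatch all three by the same cut-off/truncation scheme already used in \eqref{prop-flow.1} and in the proof of Lemma \ref{lem-probab-repres}: carry out the energy computation for the truncated, bounded-coefficient problem, where the identity is classical, and then pass to the limit using the uniform $L^p(\mu_N)$ bounds from the first part together with polynomial-growth estimates on $\rho^N_t$ and its gradient inherited from the representation \eqref{probab-repres}.
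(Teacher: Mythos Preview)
Your proposal is correct and follows essentially the same route as the paper: Jensen's inequality plus the measure invariance of $\Phi^N_t$ for the $L^p$ bound, and differentiating $\|\rho^N_t\|_{L^2(\mu_N)}^2$ in time, using the PDE, and then Gaussian integration by parts with $\div_{\mu_N}(b_N)=\div_{\mu_N}(G_N^k)=0$ for the energy identity. The paper's proof is in fact terser---it does not explicitly discuss Fubini, integrability, or the cut-off justification you outline---so your extra care on these points is a welcome elaboration rather than a deviation.
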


\begin{proof}
By \eqref{probab-repres}, for any $t\in [0,T]$,
  $$ \int_{H_N} \big|\rho^N_t(\xi) \big|^p\,\mu_N(\d\xi) \leq \int_{H_N} \E \big|\rho^N_0 \big( \Phi^N_t(\xi) \big) \big|^p\,\mu_N(\d\xi)= \int_{H_N} \big|\rho^N_0 (\xi)\big|^p\,\mu_N(\d\xi), $$
where the last step follows from the fact that $\Phi^N_t$ preserves the Gaussian measure $\mu_N$.

Next, by \eqref{Kolmogorov-eq}, we have
  $$\frac{\d}{\d t} \big\|\rho^N_t \big\|_{L^2(H_N)}^2 = 2 \int_{H_N} \rho^N_t \partial_t \rho^N_t \,\d\mu_N = 2 \int_{H_N} \rho^N_t \L_N^\ast \rho^N_t \,\d\mu_N.$$
Using the expression \eqref{approx-generator} and integrating by parts yield
  $$ \aligned
  \frac{\d}{\d t} \big\|\rho^N_t \big\|_{L^2(H_N)}^2 &= \sum_{|k|\leq N} \int_{H_N} \! \rho^N_t \Big\<G_N^k, \nabla_N \big\<G_N^k, \nabla_N \rho^N_t\big\> \Big\> \,\d\mu_N  + 2 \int_{H_N} \! \rho^N_t \big\<b_N, \nabla_N \rho^N_t \big\>\,\d\mu_N \\
  & = - \sum_{|k|\leq N} \int_{H_N} \big\<G_N^k, \nabla_N \rho^N_t\big\>^2 \,\d\mu_N .
  \endaligned$$
Therefore, integrating from $0$ to $t$ gives us \eqref{estimate-2}.
\end{proof}

\subsection{Hermite polynomials on $H^{-1-}(\T^2)$}

In this part we give a short introduction of the Hermite polynomials on $H^{-1-}(\T^2)$, see the recent book \cite[Chap. 5]{LR} for more details. Recall the one dimensional Hermite polynomials $h_n(t), n\geq 0$ which is characterized by
  \begin{equation*}
  \int_\R f^{(n)}(t) \,\d\nu_1(t) = \int_\R f(t) h_n(t) \,\d\nu_1(t),\quad \forall\, f\in C_b^\infty(\R).
  \end{equation*}
Here $\nu_1$ is the standard normal distribution on $\R$. $h_n$ has the expression
  \begin{equation*}
  h_n(t) = (-1)^n {\rm e}^{t^2/2} \frac{\d^n}{\d t^n} \big({\rm e}^{-t^2/2} \big),
  \end{equation*}
and it holds that
  \begin{equation*}
  h_n(t) = t h_{n-1}(t) - h'_{n-1}(t).
  \end{equation*}
It is well known that Hermite polynomials are eigenfunctions of the one dimensional Ornstein--Uhlenbeck operator:
  $$Af(t) = f''(t) - tf'(t), \quad f\in C_P^\infty(\R).$$
Indeed,
  $$A h_n = -n h_n, \quad n\geq 0.$$

Now we shall construct Hermite polynomials on $H^{-1-}(\T^2)$. Introduce the notations
  \begin{equation*}
  \bm{n} = (n_k )_{k\in \Z^2_0}\in (\N\cup \{0\})^{\Z^2_0}, \quad |\bm{n}| = \sum_{k\in \Z^2_0}^\infty n_k.
  \end{equation*}
Define the index set by
  $$\bm{N}= \big\{\bm{n}\in (\N\cup \{0\})^{\Z^2_0}: |\bm{n}|<+\infty \big\}.$$
For $\bm{n}\in \bm{N}$, there are only finitely many nonzero coordinates. Define
  \begin{equation*}
  H_{\bm{n}}(\omega) = \prod_{k\in \Z^2_0} h_{n_k}(\<\omega, e_k\>),\quad \omega \in H^{-1-}(\T^2).
  \end{equation*}
Since $h_0 \equiv 1$, this is indeed a finite product and $H_{\bm{n}}\in \mathcal{FC}_P$. Note that $\mathcal{FC}_P$ is dense in $L^2(H^{-1-}, \mu)$, one can prove that

\begin{lemma}\label{lem-basis}
The family $\{H_{\bm{n}}: \bm{n}\in \bm{N} \}$ is an orthogonal basis of $L^2(H^{-1-}, \mu)$.
\end{lemma}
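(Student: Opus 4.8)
The plan is to exploit the defining property of white noise: under the enstrophy measure $\mu$, the coordinates $\{\<\omega, e_k\>: k\in \Z^2_0\}$ form a family of independent standard Gaussian random variables, because $\mu$ is white noise and $\{e_k\}$ is orthonormal in $L^2(\T^2,\d x)$. Thus the image of $\mu$ under $\omega\mapsto (\<\omega, e_k\>)_{k\in\Z^2_0}$ is the countable product of copies of $\nu_1$, and both assertions reduce to properties of the one-dimensional Hermite polynomials $h_n$ relative to $\nu_1$.

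First I would establish orthogonality. From the characterizing identity (or the Rodrigues formula) one has the classical relations $\int_\R h_n h_m \,\d\nu_1 = n!\,\delta_{nm}$. Given $\bm n\neq \bm m$ in $\bm N$, pick $k$ with $n_k\neq m_k$; since only finitely many coordinates of $\bm n,\bm m$ are nonzero and the $\<\omega,e_k\>$ are independent, the integral factorizes,
\begin{equation*}
\int H_{\bm n} H_{\bm m}\,\d\mu = \prod_{k\in\Z^2_0} \int_\R h_{n_k} h_{m_k}\,\d\nu_1 = 0,
\end{equation*}
the factor with $n_k\neq m_k$ forcing the product to vanish. The same computation gives $\int H_{\bm n}^2\,\d\mu = \prod_{k} n_k! >0$, so the $H_{\bm n}$ are nonzero and mutually orthogonal.

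It remains to prove completeness, which is where the real work lies. Using the stated density of $\mathcal{FC}_P$ in $L^2(H^{-1-},\mu)$, it suffices to approximate an arbitrary cylindrical $F= f\circ\Pi_\Lambda$ in $L^2(\mu)$ by finite linear combinations of the $H_{\bm n}$. As $F$ depends only on the finitely many variables $(\<\omega,e_l\>)_{l\in\Lambda}$, whose joint law is the standard Gaussian $\gamma_\Lambda$ on $\R^\Lambda$, the problem becomes the finite-dimensional assertion that the products $\prod_{l\in\Lambda} h_{n_l}(\cdot)$ — precisely the $H_{\bm n}$ supported in $\Lambda$ — span a dense subspace of $L^2(\R^\Lambda,\gamma_\Lambda)$. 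I would prove this via the generating-function vectors: the identity $\exp(t\xi - t^2/2) = \sum_{n\geq0} \frac{t^n}{n!} h_n(\xi)$ converges in $L^2(\nu_1)$, so by independence each exponential $\exp(\sum_{l\in\Lambda} t_l \<\omega,e_l\>)$ lies in the closed span of the $H_{\bm n}$; and these exponentials are total in $L^2(\gamma_\Lambda)$, since any $g$ orthogonal to all of them makes the (exponentially convergent) Laplace transform of $g\,\d\gamma_\Lambda$ vanish identically, forcing $g=0$.

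The main obstacle is exactly this totality step: it rests on the Gaussian tails of $\gamma_\Lambda$, equivalently the determinacy of the associated moment problem, which is what guarantees that polynomials — and hence Hermite polynomials — are dense in $L^2$ of a Gaussian measure (a property that fails for heavier-tailed measures). Once the finite-dimensional density is secured, combining it over all $\Lambda\Subset\Z^2_0$ with the density of $\mathcal{FC}_P$ shows that the closed linear span of $\{H_{\bm n}:\bm n\in\bm N\}$ exhausts $L^2(H^{-1-},\mu)$, which together with the orthogonality of the second paragraph completes the proof.
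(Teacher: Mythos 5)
Your proof is correct and takes essentially the route the paper intends: the paper states this lemma without proof, citing \cite[Chap.~5]{LR} and pointing to the density of $\mathcal{FC}_P$ in $L^2(H^{-1-},\mu)$, which is exactly the reduction you make. Your argument---orthogonality by factorization over the independent standard Gaussian coordinates $\<\omega,e_k\>$, and completeness by reducing to the finite-dimensional Gaussian case and using Hermite generating functions together with totality of exponentials in $L^2(\gamma_\Lambda)$---is the standard proof supplied by that reference.
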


Finally, we recall that the $n$-th Wiener chaos is defined as
  $$\mathcal C_n= \overline{\mbox{span}\{H_{\bm{n}}: |\bm{n}| = n\}}^{L^2(\mu)}. $$

\section{Forward Kolmogorov equation on $H^{-1-}(\T^2)$}

This section is devoted to the proof of Theorem \ref{main-result} and consists of two parts. In Section 3.1, we prove the first two claims of Theorem \ref{main-result}. The last assertion is proved in Section 3.2.

\subsection{Existence of limit points and basic properties} \label{proof-first-main-result}

Now suppose that $\rho_0 \in L^2(H^{-1-},\mu)$. Then there exists a sequence $\big\{ \rho^N_0 \big\}_{N\in \N}$ such that $\rho^N_0 \in C_P^\infty (H_N)$ and
  \begin{equation}\label{projection}
  \lim_{N\to \infty} \big\|\rho^N_0\circ \Pi_N -\rho_0 \big\|_{L^2(\mu)} =0.
  \end{equation}
Let $\rho^N_t$ be the solution to \eqref{Kolmogorov-eq} with the initial condition $\rho^N_0$. Then the energy identity \eqref{estimate-2} holds. By a slight abuse of notations, we shall write
  \begin{equation*}
  \rho^N_t(\omega) = \rho^N_t(\Pi_N \omega), \quad (t,\omega) \in [0,T] \times H^{-1-}.
  \end{equation*}
Then $D \rho^N_t(\omega)= \big(\nabla_N \rho^N_t \big)(\Pi_N \omega) \in H_N$ for all $(t,\omega) \in [0,T] \times H^{-1-}$. Hence,
  \begin{equation}\label{eq-6-1}
  \aligned
  \big\<\sigma_k \cdot \nabla(\Pi_N \omega), D \rho^N_t(\omega) \big\>_{L^2(\T^2)} &= \big\<\Pi_N \big(\sigma_k \cdot \nabla(\Pi_N \omega) \big), \big(\nabla_N \rho^N_t \big)(\Pi_N \omega) \big\>_{H_N}\\
  &= \big\<G_N^k, \nabla_N \rho^N_t\big\>_{H_N} (\Pi_N \omega).
  \endaligned
  \end{equation}
Combining these facts with \eqref{estimate-2} yields that
  \begin{equation}\label{estimate-3}
  \big\|\rho^N_t \big\|_{L^2(\mu)}^2 + \sum_{|k|\leq N} \int_0^t \!\! \int_{H^{-1-}} \big\<\sigma_k \cdot \nabla(\Pi_N \omega), D \rho^N_s \big\>_{L^2(\T^2)}^2 \,\d\mu \d s = \big\|\rho^N_0 \big\|_{L^2(\mu)}^2, \quad t\in [0,T].
  \end{equation}
For $k\in \Z^2_0$ with $|k|>N$, we set $\big\<\sigma_k \cdot \nabla(\Pi_N \omega), D \rho^N_s \big\> \equiv 0$. Combining \eqref{projection} and \eqref{estimate-3}, we have proved

\begin{proposition}\label{prop-convergence}
\begin{itemize}
\item[\rm (1)] $\big\{\rho^N \big\}_{N\in \N}$ is a bounded sequence in $L^\infty \big([0,T], L^2(H^{-1-}, \mu) \big)$;
\item[\rm (2)] the family
  $$\big\{ \big\<\sigma_k \cdot \nabla(\Pi_N \omega), D \rho^N_t \big\>_{L^2(\T^2)}: (k, t, \omega) \in \Z^2_0 \times [0,T] \times H^{-1-} \big\}_{N \in \N}$$
is bounded in the Hilbert space $L^2\big(\Z^2_0 \times [0,T] \times H^{-1-}, \# \otimes \d t \otimes \mu\big)$, where $\#$ is the counting measure on $\Z^2_0$.
\end{itemize}
\end{proposition}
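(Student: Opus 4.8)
The plan is to read off both statements directly from the energy identity \eqref{estimate-3} combined with the convergence of initial data \eqref{projection}; no limiting or compactness argument is needed at this stage, since the proposition only records \emph{uniform} bounds. The starting observation is that \eqref{projection} forces the initial data to be bounded: a convergent sequence in $L^2(\mu)$ is bounded, so there is a constant $C$ with $\big\|\rho^N_0\big\|_{L^2(\mu)} \leq C$ for all $N$, where I have used $\mu_N = (\Pi_N)_\#\,\mu$ to identify $\big\|\rho^N_0\big\|_{L^2(\mu_N)} = \big\|\rho^N_0\circ\Pi_N\big\|_{L^2(\mu)}$. This uniform bound on the right-hand side of \eqref{estimate-3} drives everything.

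For assertion (1), I would discard the nonnegative diffusion term in \eqref{estimate-3}, which gives $\big\|\rho^N_t\big\|_{L^2(\mu)}^2 \leq \big\|\rho^N_0\big\|_{L^2(\mu)}^2 \leq C^2$ for every $t\in[0,T]$ and every $N$. Taking the supremum over $t\in[0,T]$ yields $\big\|\rho^N\big\|_{L^\infty([0,T],L^2(\mu))} \leq C$, which is the claimed boundedness in $L^\infty\big([0,T], L^2(H^{-1-},\mu)\big)$.

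For assertion (2), I would instead discard the nonnegative term $\big\|\rho^N_t\big\|_{L^2(\mu)}^2$ and set $t=T$ in \eqref{estimate-3}, obtaining
$$\sum_{|k|\leq N} \int_0^T \!\! \int_{H^{-1-}} \big\<\sigma_k \cdot \nabla(\Pi_N \omega), D \rho^N_s \big\>_{L^2(\T^2)}^2 \,\d\mu \d s \leq C^2.$$
The one point requiring care is to recognize the left-hand side as the squared norm of the $N$-th member of the family in the Hilbert space $L^2\big(\Z^2_0 \times [0,T] \times H^{-1-}, \# \otimes \d t \otimes \mu\big)$. Using the convention that the pairing is set to $0$ for $|k|>N$, the finite sum over $|k|\leq N$ coincides with the integral against the counting measure $\#$ on all of $\Z^2_0$, and Tonelli's theorem (all integrands being nonnegative) lets me write that integral as the stated iterated sum and integrals. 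Hence the family is bounded by $C$ in that Hilbert space.

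The main obstacle here is bookkeeping rather than genuine analysis: one must match the counting-measure formulation of (2) with the finite sum appearing in \eqref{estimate-3} via the zero convention for $|k|>N$, and keep the two equal norms $\big\|\rho^N_0\big\|_{L^2(\mu_N)}$ and $\big\|\rho^N_0\circ\Pi_N\big\|_{L^2(\mu)}$ straight through the identification $\mu_N=(\Pi_N)_\#\,\mu$. Once the energy identity \eqref{estimate-3} is granted, both bounds are immediate.
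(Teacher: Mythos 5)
Your proof is correct and follows exactly the paper's route: the paper itself simply states that Proposition \ref{prop-convergence} follows by ``combining \eqref{projection} and \eqref{estimate-3}'', which is precisely your argument of bounding $\big\|\rho^N_0\big\|_{L^2(\mu)}$ uniformly via the convergence \eqref{projection} and then dropping one nonnegative term of the energy identity \eqref{estimate-3} for each of the two assertions. The bookkeeping you spell out (the identification $\big\|\rho^N_0\big\|_{L^2(\mu_N)}=\big\|\rho^N_0\circ\Pi_N\big\|_{L^2(\mu)}$ via $\mu_N=(\Pi_N)_\#\mu$, and the zero convention for $|k|>N$ matching the finite sum with the counting-measure norm) is exactly what the paper leaves implicit.
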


As a consequence, we obtain the following result which proves the first two assertions of Theorem \ref{main-result}.

\begin{theorem}\label{thm-1}
Assume $\rho_0\in L^2 \big( H^{-1-}, \mu \big)$ and $\gamma\geq 2$ in \eqref{vector-fields}. There exists a measurable function $\rho \in L^\infty \big( [0,T], L^2( H^{-1-}, \mu) \big)$ such that
\begin{itemize}
\item[\rm (i)] for every $k\in \Z^2_0$, $\big\<\sigma_k \cdot \nabla\omega , D \rho_t \big\>$ exists in the distributional sense;
\item[\rm (ii)] the gradient estimate holds:
  \begin{equation}\label{thm-1.0}
  \sum_{k\in \Z^2_0} \int_0^T \!\! \int \big\<\sigma_k \cdot \nabla\omega , D \rho_t \big\>^2 \, \d\mu\d t \leq \|\rho_0 \|_{L^2(\mu)}^2.
  \end{equation}
\end{itemize}
\end{theorem}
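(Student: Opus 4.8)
The plan is to obtain $\rho$ together with the claimed distributional derivatives as weak limits of the Galerkin approximations, exploiting the two uniform bounds recorded in Proposition \ref{prop-convergence}. First I would note that $L^\infty\big([0,T], L^2(H^{-1-},\mu)\big)$ is the dual of the separable space $L^1\big([0,T], L^2(H^{-1-},\mu)\big)$, so by Banach--Alaoglu and item (1) of Proposition \ref{prop-convergence} a subsequence of $\big\{\rho^N\big\}$ converges weak-$*$ to some $\rho$ in this dual space. Simultaneously, item (2) places the truncated fields $g^N_k := \big\<\sigma_k\cdot\nabla(\Pi_N\omega), D\rho^N_t\big\>$ (viewed as functions of $(t,\omega)$, set to $0$ for $|k|>N$) in a ball of the separable Hilbert space $L^2\big(\Z^2_0\times[0,T]\times H^{-1-}\big)$; passing to a further subsequence, $g^N \rightharpoonup g=(g_k)$ weakly there. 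By a diagonal extraction I may assume both convergences hold along one common subsequence, which I relabel $N$.

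The decisive step is to identify $g_k$ with $\<\sigma_k\cdot\nabla\omega, D\rho_t\>$ in the distributional sense, which gives (i). The starting point is the finite-dimensional integration by parts: since $\div_{\mu_N}\big(G_N^k\big)=0$ by Proposition \ref{lem-divergence}, for every cylindrical $F$ and $N$ large (so that $DF\in H_N$) one has, after transporting to $H^{-1-}$ via $\Pi_N$ as in \eqref{eq-6-1},
$$ \int \big\<\sigma_k\cdot\nabla(\Pi_N\omega), DF\big\>\, \rho^N_t\,\d\mu = -\int F\, g^N_{k}\,\d\mu . $$
Multiplying by a weight $\alpha\in C([0,T])$ and integrating in $t$, I would pass to the limit in $N$. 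The key observation making this clean is that $\sigma_k\cdot\nabla e_j$ is a trigonometric polynomial of bounded frequency, so $\Pi_N\big(\sigma_k\cdot\nabla e_j\big)=\sigma_k\cdot\nabla e_j$ for $N$ large; hence the test quantity stabilises, $\big\<\sigma_k\cdot\nabla(\Pi_N\omega), DF\big\>=\<\sigma_k\cdot\nabla\omega, DF\>$, a fixed cylindrical function lying in every $L^p(\mu)$. The left-hand side therefore tends to $\int_0^T\!\alpha\int \<\sigma_k\cdot\nabla\omega, DF\>\rho_t\,\d\mu\,\d t$ by the weak-$*$ convergence of $\rho^N$ against the fixed element $\alpha\,\<\sigma_k\cdot\nabla\omega, DF\>$ of $L^1([0,T],L^2(\mu))$, while the right-hand side tends to $-\int_0^T\!\alpha\int F g_k\,\d\mu\,\d t$ by the weak convergence $g^N\rightharpoonup g$ tested against the fixed $\alpha F$. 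Since $\alpha$ is arbitrary, I conclude that for a.e.\ $t$, $\int \<\sigma_k\cdot\nabla\omega, DF\>\rho_t\,\d\mu = -\int F g_{k,t}\,\d\mu$ for all cylindrical $F$, that is $\<\sigma_k\cdot\nabla\omega, D\rho_t\>=g_{k,t}$ distributionally.

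For the gradient estimate (ii) I would invoke weak lower semicontinuity of the norm in $L^2\big(\Z^2_0\times[0,T]\times H^{-1-}\big)$: since $g^N\rightharpoonup g$,
$$ \sum_{k\in\Z^2_0}\int_0^T\!\!\int \<\sigma_k\cdot\nabla\omega, D\rho_t\>^2\,\d\mu\,\d t = \|g\|^2 \le \liminf_{N\to\infty}\|g^N\|^2 . $$
Taking $t=T$ in the energy identity \eqref{estimate-3} gives $\|g^N\|^2=\big\|\rho^N_0\big\|_{L^2(\mu)}^2-\big\|\rho^N_T\big\|_{L^2(\mu)}^2\le \big\|\rho^N_0\big\|_{L^2(\mu)}^2$, and \eqref{projection} yields $\big\|\rho^N_0\big\|_{L^2(\mu)}\to\|\rho_0\|_{L^2(\mu)}$; combining these bounds produces $\|g\|^2\le\|\rho_0\|_{L^2(\mu)}^2$, which is exactly \eqref{thm-1.0}.

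The main obstacle is the identification in the second paragraph, and specifically avoiding a product of two merely weakly convergent sequences when passing to the limit in the finite-dimensional integration-by-parts identity. This is resolved by the stabilisation of $\big\<\sigma_k\cdot\nabla(\Pi_N\omega), DF\big\>$ for large $N$, which turns each side into a single weak(-$*$) limit tested against a fixed function. Some care is also needed to upgrade the $\alpha$-integrated equality to an a.e.-$t$ statement and to ensure that both weak limits are captured along one common subsequence.
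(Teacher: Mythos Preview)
Your proposal is correct and follows essentially the same route as the paper's proof: extract weak(-$*$) limits from the uniform bounds of Proposition \ref{prop-convergence}, integrate by parts at the finite-dimensional level using $\div_{\mu_N}(G_N^k)=0$, exploit the stabilisation $\big\<\sigma_k\cdot\nabla(\Pi_N\omega), DF\big\>=\<\sigma_k\cdot\nabla\omega, DF\>$ for large $N$ (which the paper isolates as Lemma \ref{lem-3}) to turn each side into a weak limit against a fixed test function, and finish (ii) by lower semicontinuity of the norm combined with \eqref{estimate-3} and \eqref{projection}. The only cosmetic difference is that the paper begins the identification from the $g^N$ side and integrates by parts, whereas you start from the $\rho^N$ side; the arguments are otherwise identical.
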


\begin{proof}
By Proposition \ref{prop-convergence}, there exists a subsequence $\{N_i\}_{i\in \N}$ such that
\begin{itemize}
\item[\rm (a)] $\rho^{N_i}$ converges weakly-$\ast$ to some $\rho$ in  $L^\infty \big([0,T], L^2(H^{-1-}, \mu) \big)$;
\item[\rm (b)] $\big\<\sigma_k \cdot \nabla(\Pi_{N_i} \omega), D \rho^{N_i}_t \big\>_{L^2(\T^2)}$ converges weakly to some $\varphi \in L^2\big(\Z^2_0 \times [0,T] \times H^{-1-}, \# \otimes \d t \otimes \mu\big)$.
\end{itemize}

Let $\alpha\in C([0,T], \R)$ and $\beta \in L^2\big(\Z^2_0 \times H^{-1-}, \# \otimes \mu\big)$ such that $\beta_k\in \mathcal{FC}_{P}$ for all $k\in \Z^2_0$.  By the assertion (b),
  $$\lim_{i\to \infty} \sum_{k\in \Z^2_0} \int_0^T \!\! \int \big\<\sigma_k \cdot \nabla(\Pi_{N_i} \omega), D \rho^{N_i}_t \big\>_{L^2(\T^2)} \alpha(t) \beta_k \,\d\mu \d t = \sum_{k\in \Z^2_0} \int_0^T \!\! \int \varphi_k (t)\alpha(t) \beta_k(t) \,\d\mu \d t.$$
Fix some $k\in \Z^2_0$, we assume that $\beta_j \equiv 0 $ for all $j\neq k$ and $\beta_k  = \beta_k\circ \Pi_\Lambda$ for some $\Lambda \Subset \Z_0^2$. Then the above limit reduces to
  \begin{equation}\label{thm-1.1}
  \lim_{i\to \infty} \int_0^T \!\! \int \big\<\sigma_k \cdot \nabla(\Pi_{N_i} \omega), D \rho^{N_i}_t \big\>_{L^2(\T^2)} \, \alpha(t) \beta_k \,\d\mu \d t = \int_0^T \!\! \int \varphi_k (t)\alpha(t) \beta_k \,\d\mu \d t.
  \end{equation}
For $N_i$ big enough, we have by \eqref{eq-6-1} and \eqref{lem-divergence.1} that
  $$\aligned
  &\hskip13pt \int_0^T \!\! \int \big\<\sigma_k \cdot \nabla(\Pi_{N_i} \omega), D \rho^{N_i}_t \big\>_{L^2(\T^2)} \, \alpha(t) \beta_k(\omega) \,\d\mu \d t\\
  &= \int_0^T \!\! \int_{H_{N_i}} \big\< G_{N_i}^k, \nabla_{N_i} \rho^{N_i}_t \big\>_{H_{N_i}} \! (\xi) \, \alpha(t) \beta_k(\xi) \,\d\mu_{N_i} \d t \\
  & = - \int_0^T \!\! \int_{H_{N_i}} \rho^{N_i}_t(\xi) \alpha(t) \big\< G_{N_i}^k, \nabla_{N_i} \beta_k \big\>_{H_{N_i}} \!(\xi) \,\d\mu_{N_i} \d t \\
  &= - \int_0^T \!\! \int \rho^{N_i}_t(\omega) \alpha(t) \big\< \sigma_k \cdot \nabla(\Pi_{N_i} \omega), D \beta_k \big\>_{L^2(\T^2)} \,\d\mu \d t.
  \endaligned$$
Lemma \ref{lem-3} below implies
  $$\aligned
  \int_0^T \!\! \int \big\<\sigma_k \cdot \nabla(\Pi_{N_i} \omega), D \rho^{N_i}_t \big\>_{L^2(\T^2)} \, \alpha(t) \beta_k \,\d\mu \d t &= - \int_0^T \!\! \int \rho^{N_i}_t \alpha(t)  \big\< \sigma_k \cdot \nabla\omega, D \beta_k \big\> \,\d\mu \d t \\
  & \to - \int_0^T \!\! \int \rho_t\, \alpha(t) \big\< \sigma_k \cdot \nabla\omega, D \beta_k \big\> \,\d\mu \d t,
  \endaligned$$
where the last step is due to (a). Combining this limit with \eqref{thm-1.1} yields
  $$\int_0^T \!\! \int \varphi_k(t) \alpha(t) \beta_k \,\d\mu \d t = - \int_0^T \!\! \int \rho_t\, \alpha(t) \big\< \sigma_k \cdot \nabla\omega, D \beta_k \big\> \,\d\mu \d t.$$
By the arbitrariness of $\alpha\in C([0,T])$ and $\beta_k \in \mathcal{FC}_P$, we see that, in the distributional sense,
  \begin{equation}\label{thm-1.2}
  \varphi_k (t) = \big\< \sigma_k \cdot \nabla\omega, D \rho_t\big\>.
  \end{equation}
We obtain the assertion (i). The second assertion follows from \eqref{projection}, \eqref{estimate-3} and the fact (b).
\end{proof}

\begin{lemma}\label{lem-3}
Let $k\in \Z^2_0$ and $G\in \mathcal{FC}_P$ be fixed. For  all $N$ big enough,
  $$\big\< \sigma_k \cdot \nabla(\Pi_N\omega), D G \big\> = \big\< \sigma_k \cdot \nabla\omega, D G\big\>.$$
\end{lemma}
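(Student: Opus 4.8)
The plan is to reduce the claimed identity to a family of single-mode statements and then to exploit that $\sigma_k \cdot \nabla e_j$ carries only finitely many Fourier modes. Write $G = g \circ \Pi_\Lambda$ with $\Lambda \Subset \Z^2_0$ and $g \in C_P^\infty(H_\Lambda)$, so that $DG(\omega) = \sum_{j \in \Lambda} \big((\partial_j g) \circ \Pi_\Lambda\big)(\omega)\, e_j$. The scalar coefficients $(\partial_j g) \circ \Pi_\Lambda$ are common to both sides of the asserted equality, so it suffices to prove, for each fixed $j \in \Lambda$ and all sufficiently large $N$, the single-mode identity $\big\<\sigma_k \cdot \nabla(\Pi_N \omega), e_j\big\> = \big\<\sigma_k \cdot \nabla \omega, e_j\big\>$, and then to multiply by $(\partial_j g)\circ\Pi_\Lambda$ and sum over the finite set $\Lambda$.

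First I would rewrite both sides as a pairing of $\omega$ against one and the same smooth test function. On the right-hand side, the very definition of $\sigma_k \cdot \nabla \omega$ as a distribution gives $\big\<\sigma_k \cdot \nabla \omega, e_j\big\> = -\big\<\omega, \sigma_k \cdot \nabla e_j\big\>$. On the left-hand side, $\Pi_N \omega = \sum_{l \in \Lambda_N} \<\omega, e_l\> e_l$ is a genuine smooth function, hence so is $\sigma_k \cdot \nabla(\Pi_N \omega)$; since $\sigma_k$ is smooth and divergence free, integration by parts on $\T^2$ yields $\big\<\sigma_k \cdot \nabla(\Pi_N \omega), e_j\big\>_{L^2} = -\big\<\Pi_N \omega, \sigma_k \cdot \nabla e_j\big\>_{L^2}$. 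Thus everything is governed by the single smooth function $\sigma_k \cdot \nabla e_j$.

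Next I would pin down the Fourier content of this function. From the expression \eqref{diffusion.1} of $\sigma_k$ in the complex basis (and its analogue for $k\in\Z^2_-$), together with \eqref{eq-1} and \eqref{real-basis.1}, the field $\sigma_k$ is supported on the frequencies $\{\pm k\}$ and $\nabla e_j$ on $\{\pm j\}$, so by the product rule $\tilde e_a \tilde e_b = \tilde e_{a+b}$ in \eqref{properties-basis} the function $\sigma_k \cdot \nabla e_j$ lies in the finite-dimensional span of $\{e_m : m \in S\}$ for a finite set $S \subset \{\pm k \pm j\}$, whose elements satisfy $|m_1| \vee |m_2| \le (|k_1| \vee |k_2|) + (|j_1| \vee |j_2|)$. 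Write $\sigma_k \cdot \nabla e_j = \sum_{m \in S} c_m e_m$. Because $\{e_m\}$ is orthonormal one has $\<\Pi_N \omega, e_m\> = \mathbf{1}_{\Lambda_N}(m)\, \<\omega, e_m\>$, so the left-hand pairing equals $-\sum_{m \in S \cap \Lambda_N} c_m \<\omega, e_m\>$ while the right-hand one equals $-\sum_{m \in S} c_m \<\omega, e_m\>$. These coincide precisely when $S \subset \Lambda_N$, which holds as soon as $N \ge (|k_1| \vee |k_2|) + \max_{j \in \Lambda}(|j_1| \vee |j_2|)$; this threshold is finite and uniform over the finite index set $\Lambda$, which is exactly what ``for all $N$ big enough'' means. (Any mode $m=0$ that could formally appear has vanishing coefficient $c_m$, since $k^\perp\cdot j=0$ when $k=\pm j$, and in any case pairs trivially with the zero-mean $\omega$, so it is harmless.)

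I do not expect a serious obstacle here. The only point requiring care is that the distributional definition of $\big\<\sigma_k \cdot \nabla \omega, e_j\big\>$ on the right is matched by the honest $L^2$ integration by parts on the left, and that both legitimately reduce to testing $\omega$ against the same smooth function $\sigma_k \cdot \nabla e_j$ precisely because $\sigma_k$ is divergence free. Once this is observed, the argument is pure bookkeeping of the finitely many frequencies $\pm k \pm j$ and of how $\Pi_N$ truncates them.
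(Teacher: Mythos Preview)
Your proposal is correct and follows essentially the same route as the paper: both reduce the identity, via the divergence-free condition on $\sigma_k$, to showing $\langle \omega, \sigma_k \cdot \nabla e_j \rangle = \langle \Pi_N \omega, \sigma_k \cdot \nabla e_j \rangle$ for each $j\in\Lambda$ and all large $N$, which holds because $\sigma_k \cdot \nabla e_j$ is a trigonometric polynomial whose frequencies lie in $\{\pm k \pm j\}\subset\Lambda_N$ once $N$ is big enough. Your write-up is simply more explicit about the Fourier support and the uniform threshold over $\Lambda$ than the paper's one-line version.
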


\begin{proof}
Assume $G= g\circ \Pi_\Lambda$ for some finite set $\Lambda \Subset \Z^2$. Note that
  $$\big\< \sigma_k \cdot \nabla\omega, D G \big\> = - \sum_{j\in \Lambda} (\partial_j g) (\Pi_\Lambda\omega) \< \omega, \sigma_k \cdot \nabla e_j\> = - \sum_{j\in \Lambda} (\partial_j g) (\Pi_\Lambda\omega) \< \Pi_N \omega, \sigma_k \cdot \nabla e_j\>$$
for all large $N$. Hence
  \[ \big\< \sigma_k \cdot \nabla\omega, DG \big\> = \sum_{j\in \Lambda} (\partial_j g) (\Pi_\Lambda\omega) \< \sigma_k \cdot \nabla (\Pi_N \omega), e_j\> = \big\<\sigma_k \cdot \nabla (\Pi_N \omega), D G\big\>. \qedhere \]
\end{proof}

\subsection{The case $\gamma>2$}

Our purpose is to prove the assertion (iii) of Theorem \ref{main-result}. Before proceeding further, we need some technical preparations. For $k,l\in \Z^2_0$, set
  \begin{equation}\label{constant}
  C_{k,l}(\gamma)= \frac{k^\perp \cdot l}{|k|^\gamma}.
  \end{equation}
We shall omit the parameter $\gamma$ in $C_{k,l}(\gamma)$ to save space.

\begin{lemma}\label{lem-2-1}
For any $l\in \Z_0^2$,
  $$\nabla e_l = 2\pi l \, e_{-l}.$$
As a result, for any $k,l\in \Z^2_0$,
  $$\sigma_k \cdot \nabla e_l = \sqrt{2} \pi C_{k,l} \, e_k e_{-l}.$$
Moreover,
  \begin{equation}\label{lem-2-1.1}
  \sum_{|k|\leq N} C_{k,l}^2 = \frac12 a_N |l|^2 \quad \mbox{with} \quad a_N = \sum_{|k|\leq N} \frac1{|k|^{2(\gamma -1)}}.
  \end{equation}
\end{lemma}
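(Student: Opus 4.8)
The plan is to verify the three identities in turn, each being an elementary computation; the only genuine content lies in the symmetry argument for the last one. For the first identity I would pass to the complex basis: writing $e_l$ in terms of $\tilde e_{\pm l}$ via \eqref{real-basis.1} and using the single-mode form of \eqref{eq-1}, namely $\nabla \tilde e_k = 2\pi {\rm i}\, \tilde e_k\, k$, a short computation in each of the cases $l\in \Z^2_+$ and $l\in \Z^2_-$ yields $\nabla e_l = 2\pi l\, e_{-l}$. (Equivalently one may differentiate the trigonometric expression \eqref{real-basis} directly, using that $\cos$ is even and $\sin$ is odd to identify the derivative with $2\pi l\, e_{-l}$.) The conceptual point is that differentiation turns $\cos$ into $\sin$ and vice versa, i.e.\ it swaps $\Z^2_+$ and $\Z^2_-$, which is precisely the passage from $e_l$ to $e_{-l}$.

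For the second identity I would contract the first with $\sigma_k$: from $\nabla e_l = 2\pi l\, e_{-l}$ one gets $\sigma_k \cdot \nabla e_l = 2\pi (\sigma_k \cdot l)\, e_{-l}$. By \eqref{vector-fields}, $\sigma_k(x)\cdot l$ equals $\frac{k^\perp\cdot l}{|k|^\gamma}$ times $\cos(2\pi k\cdot x)$ or $\sin(2\pi k\cdot x)$ according to $k\in \Z^2_+$ or $k\in \Z^2_-$; comparing with \eqref{real-basis}, this trigonometric factor equals $\frac{1}{\sqrt2}\,e_k(x)$ in both cases. Hence $\sigma_k\cdot l = \frac{1}{\sqrt2}\, C_{k,l}\, e_k$ with $C_{k,l}$ as in \eqref{constant}, and multiplying by $2\pi$ (using $2\pi/\sqrt2 = \sqrt2\,\pi$) gives $\sigma_k\cdot \nabla e_l = \sqrt2\,\pi\, C_{k,l}\, e_k e_{-l}$.

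For the third identity I would expand the square. Writing $k^\perp\cdot l = k_2 l_1 - k_1 l_2$ gives $(k^\perp\cdot l)^2 = k_2^2 l_1^2 - 2 k_1 k_2 l_1 l_2 + k_1^2 l_2^2$, so that $C_{k,l}^2 = (k_2^2 l_1^2 - 2 k_1 k_2 l_1 l_2 + k_1^2 l_2^2)/|k|^{2\gamma}$. The summation set $\{|k|\le N\}$ is invariant under the sign change $k_1\mapsto -k_1$ and under the exchange $k_1\leftrightarrow k_2$. The first symmetry kills the cross term, $\sum_{|k|\le N} k_1 k_2/|k|^{2\gamma}=0$; the second gives $\sum_{|k|\le N} k_1^2/|k|^{2\gamma} = \sum_{|k|\le N} k_2^2/|k|^{2\gamma}$, and since their sum is $\sum_{|k|\le N} (k_1^2+k_2^2)/|k|^{2\gamma} = \sum_{|k|\le N} |k|^{-2(\gamma-1)} = a_N$, each of the two equals $\frac12 a_N$. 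Therefore $\sum_{|k|\le N} C_{k,l}^2 = (l_1^2+l_2^2)\cdot \frac12 a_N = \frac12 a_N |l|^2$.

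There is no real obstacle here: the whole lemma is a bookkeeping exercise. The only points requiring care are keeping the $\Z^2_+/\Z^2_-$ conventions straight in the first two parts, so that the trigonometric factor is correctly identified with $\frac{1}{\sqrt2}\,e_k$ in both cases, and, in the last part, confirming that the index set $\{|k|\le N\}$ genuinely possesses the two symmetries used, which holds whether $|k|$ denotes the Euclidean norm or the sup norm defining $\Lambda_N$.
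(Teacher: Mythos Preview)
Your proposal is correct. Parts one and two match the paper's argument essentially verbatim (the paper differentiates the real trigonometric expressions directly, which you list as your alternative), so there is nothing to compare there.

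For the third identity your route differs from the paper's. You expand $(k^\perp\cdot l)^2$ in coordinates and use two separate lattice symmetries of $\{|k|\le N\}$: the sign flip $k_1\mapsto -k_1$ to kill the cross term and the swap $k_1\leftrightarrow k_2$ to equate the two diagonal sums. The paper instead introduces the companion quantity $D_{k,l}=\frac{k\cdot l}{|k|^\gamma}$, observes that $C_{k,l}^2+D_{k,l}^2=|l|^2/|k|^{2(\gamma-1)}$ because $(k/|k|,k^\perp/|k|)$ is an orthonormal frame, and then uses the single rotational bijection $k\mapsto k^\perp$ on $\{|k|\le N\}$ to conclude $\sum C_{k,l}^2=\sum D_{k,l}^2$. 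Your argument is more hands-on and avoids the auxiliary $D_{k,l}$; the paper's is a bit more geometric and packages the two coordinate symmetries into one rotation. Both are equally short and equally valid.
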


\begin{proof}
If $l\in \Z^2_+$, then
  $$\aligned
  \nabla e_l(x) &= \sqrt{2}\, \nabla \cos(2\pi l\cdot x)= -\sqrt{2} \sin(2\pi l\cdot x)\, 2\pi l = 2\pi l \sqrt{2} \sin(2\pi (-l)\cdot x) = 2\pi l \, e_{-l}(x).
  \endaligned $$
If $k\in \Z^2_-$, then
  $$\aligned
  \nabla e_l(x) &= \sqrt{2}\, \nabla \sin(2\pi l\cdot x)= \sqrt{2} \cos(2\pi l\cdot x)\, 2\pi l = 2\pi l \sqrt{2} \cos(2\pi (-l)\cdot x) = 2\pi l \, e_{-l}(x).
  \endaligned$$
Therefore,
  $$\sigma_k(x) \cdot \nabla e_l(x)= \bigg(\frac1{\sqrt{2}}\frac{k^\perp}{|k|^\gamma} e_k(x) \bigg) \cdot \big(2\pi l \, e_{-l}(x)\big) = \sqrt{2} \pi C_{k,l} \, e_k(x) e_{-l}(x). $$

It remains to prove the last result. Denoting by $D_{k,l}= \frac{k\cdot l}{|k|^\gamma}$, then
  $$C_{k,l}^{2} + D_{k,l}^{2} = \frac{(k^\perp \cdot l)^2}{|k|^{2\gamma}} + \frac{(k \cdot l)^2}{|k|^{2\gamma}}= \frac1{|k|^{2(\gamma -1)}} \bigg[ \bigg( \frac{k^\perp}{|k|} \cdot l\bigg)^2 + \bigg( \frac{k}{|k|} \cdot l\bigg)^2 \bigg]= \frac{|l|^2}{|k|^{2(\gamma -1)}}.$$
The transformation $k\to k^\perp$ is 1-1 on the set $\{k\in \Z^2_0: |k|\leq N\}$, and preserves the norm $|\cdot|$. As a result,
  $$\sum_{|k|\leq N} C_{k,l}^{2} = \sum_{|k|\leq N} \frac{(k^\perp \cdot l)^2}{|k|^{2\gamma}} = \sum_{|k|\leq N} \frac{((k^\perp)^\perp \cdot l)^2}{|k^\perp|^{2\gamma}}= \sum_{|k|\leq N} \frac{(k \cdot l)^2}{|k|^{2\gamma}} = \sum_{|k|\leq N} D_{k,l}^{2}.$$
Combining the above two equalities, we obtain
  \[  \sum_{|k|\leq N} C_{k,l}^{2} = \frac12 \sum_{|k|\leq N} \big( C_{k,l}^{2} + D_{k,l}^{2} \big)= \frac12 |l|^2 \sum_{|k|\leq N} \frac1{|k|^{2(\gamma -1)}} = \frac12 a_N |l|^2. \qedhere \]
\end{proof}

Now we can prove the next result which characterizes the integrability of directional derivatives for cylindrical functionals.

\begin{proposition}\label{prop-gradient-cylinder}
For any cylindrical function $F$,
  $$ \sum_{k\in \Z^2_0} \int \< \sigma_k\cdot \nabla \omega, D F \>^2 \,\d\mu <+\infty$$
if and only if $\gamma>2$ in \eqref{vector-fields}.
\end{proposition}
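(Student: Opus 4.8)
The plan is to reduce the question to an explicit sum over Fourier modes and then determine its convergence as a function of $\gamma$. First I would fix a cylindrical function $F = f\circ \Pi_\Lambda$ and compute $\<\sigma_k\cdot\nabla\omega, DF\>$ explicitly. Writing $DF = \sum_{j\in\Lambda}((\partial_j f)\circ\Pi_\Lambda)\,e_j$, we have
\begin{equation*}
\<\sigma_k\cdot\nabla\omega, DF\> = \sum_{j\in\Lambda}((\partial_j f)\circ\Pi_\Lambda)\,\<\sigma_k\cdot\nabla\omega, e_j\> = -\sum_{j\in\Lambda}((\partial_j f)\circ\Pi_\Lambda)\,\<\omega, \sigma_k\cdot\nabla e_j\>.
\end{equation*}
By Lemma \ref{lem-2-1}, $\sigma_k\cdot\nabla e_j = \sqrt2\,\pi\,C_{k,j}\,e_k e_{-j}$, so each term pairs $\omega$ against the product $e_k e_{-j}$, and the key scalar is $C_{k,j}$ with $C_{k,j}^2$ summing over $k$ like $a_N|j|^2$.

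The main step is to square and sum over $k\in\Z^2_0$, then integrate against $\mu$. Squaring gives a double sum over $j, j'\in\Lambda$ with coefficient $C_{k,j}C_{k,j'}$ and the quantity $\int ((\partial_j f)\circ\Pi_\Lambda)((\partial_{j'} f)\circ\Pi_\Lambda)\,\<\omega, e_k e_{-j}\>\<\omega, e_k e_{-j'}\>\,\d\mu$. I expect the dominant contribution, for the convergence question, to come from the $k\to\infty$ asymptotics, where $|k|$ is large and $k+(-j)$, $k+(-j')$ fall outside $\Lambda$, so that $e_k e_{-j}$ is orthogonal under $\mu$ to the "low" modes on which the derivatives of $f$ depend. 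In that regime the Gaussian integral factorizes: the white-noise covariance makes $\int \<\omega, e_k e_{-j}\>\<\omega, e_k e_{-j'}\>\,(\cdots)\,\d\mu$ reduce, via the orthogonality $\delta$ of the high frequency against itself, to a term proportional to $\delta_{j,j'}\int((\partial_j f)\circ\Pi_\Lambda)^2\,\d\mu$ plus error terms controlled by finitely many modes. Using $\sum_{|k|\le N} C_{k,j}^2 = \tfrac12 a_N|j|^2$ from \eqref{lem-2-1.1}, the leading behaviour of the full sum is governed by $a_N = \sum_{|k|\le N}|k|^{-2(\gamma-1)}$.

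The crux is therefore the convergence of $a_N$ as $N\to\infty$. Since we are summing over $k\in\Z^2_0$, the series $\sum_k |k|^{-2(\gamma-1)}$ behaves like the two-dimensional zeta-type sum $\sum_k |k|^{-2(\gamma-1)}$, which converges if and only if $2(\gamma-1) > 2$, i.e.\ $\gamma > 2$, and diverges (logarithmically at the borderline) when $\gamma = 2$. So for a nonconstant $F$ — for which at least one $\partial_j f$ is genuinely present and $\int(\partial_j f)^2\,\d\mu > 0$ — the quantity $\sum_k \int \<\sigma_k\cdot\nabla\omega, DF\>^2\,\d\mu$ is finite exactly when $\gamma>2$.

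The part I expect to be the main obstacle is the careful bookkeeping of the cross terms, namely showing that the "resonant" contributions where $k+(-j)$ or $-k+j$ lands back inside the finite support $\Lambda$ (so that the high-frequency/low-frequency orthogonality fails) form only a finite sum in $k$ and hence cannot affect convergence. These resonances occur only for $|k|$ bounded by the diameter of $\Lambda$, so they contribute a finite quantity regardless of $\gamma$; the tail, which decides convergence, is purely the orthogonal regime controlled by $a_N$. Handling the complex-basis computation of $\<\omega, e_k e_{-j}\>\<\omega, e_k e_{-j'}\>$ under $\mu$ — using the identities in \eqref{properties-basis} and the Gaussian Wick/covariance structure from Lemma \ref{lem-basis} — is the technically delicate but routine part; once it is in place, the dichotomy follows immediately from the convergence of $a_N$.
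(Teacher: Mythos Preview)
Your proposal is correct and follows essentially the same route as the paper: expand $\langle\sigma_k\cdot\nabla\omega,DF\rangle$ via Lemma~\ref{lem-2-1}, square, observe that for $|k|$ large the high-frequency factors $\langle\omega,e_k e_{-j}\rangle$ are independent of $(\partial_j f)\circ\Pi_\Lambda$ so the integral factorizes, then use $\sum_{N<|k|\le M}C_{k,l}C_{k,m}\int e_k^2 e_{-l}e_{-m}\,\d x=\tfrac12\delta_{l,m}(a_M-a_N)|l|^2$ to reduce the tail to $\pi^2(a_M-a_N)\sum_{l\in\Lambda}|l|^2\int[(\partial_l f)\circ\Pi_\Lambda]^2\,\d\mu$. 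The only refinement the paper adds over your sketch is that it handles the cross terms $l\neq m$ not as ``error'' but by showing the identity $\sum_k C_{k,l}C_{k,m}e_k^2=\sum_k C_{k,l}C_{k,m}$ (via $C_{-k,l}=-C_{k,l}$ and $e_k^2+e_{-k}^2=2$), which makes them vanish exactly rather than merely being finite.
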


\begin{proof}
Assume that $F$ is of the form $F = f \circ \Pi_\Lambda$ for some $\Lambda \Subset \Z^2$ and $f\in C^\infty(H_\Lambda)$. By Lemma \ref{lem-2-1},
  $$\< \sigma_k\cdot \nabla \omega, D F \> = - \sum_{l\in \Lambda} (\partial_l f)(\Pi_\Lambda \omega) \<\omega, \sigma_k\cdot \nabla e_l\> = -\sqrt{2} \pi \sum_{l\in \Lambda} C_{k,l}\, (\partial_l f)(\Pi_\Lambda \omega) \<\omega, e_k e_{-l}\>, $$
which implies
  \begin{equation}\label{prop-gradient-cylinder.1}
  \< \sigma_k\cdot \nabla \omega, D F \>^2 = 2\pi^2 \sum_{l, m\in \Lambda} C_{k,l} C_{k,m}\, (\partial_l f)(\Pi_\Lambda \omega) (\partial_m f) (\Pi_\Lambda \omega)\, \<\omega, e_k e_{-l}\> \<\omega, e_k e_{-m}\>.
  \end{equation}

Assume $k,l \in \Z^2_+$; we have $-l\in \Z^2_-$ and
  $$e_k(x) e_{-l}(x) = - 2\cos(2\pi k\cdot x) \sin(2\pi l\cdot x) = \sin(2\pi (k-l)\cdot x) - \sin(2\pi (k+l)\cdot x).  $$
Note that $k+l\in \Z^2_+$, thus $- \sin(2\pi (k+l)\cdot x) = \frac1{\sqrt 2} e_{-k-l}(x)$. Next,
  $$\sin(2\pi (k-l)\cdot x) = \frac1{\sqrt 2} \begin{cases}
  -e_{-k+l}(x), & \mbox{if } k-l\in \Z^2_+ ; \\
  e_{k-l}(x), & \mbox{if } k-l\in \Z^2_-.
  \end{cases} $$
Similarly, we can get the expression of $e_k e_{-l}$ for $k\in \Z^2_-$ or $l \in \Z^2_-$. Since $\Lambda$ is fixed and $l\in \Lambda$, we see that $\pm k \pm l \notin \Lambda$ when $|k|$ is big enough, which implies that $(\partial_m f)(\Pi_\Lambda \omega)$ and $\<\omega, e_k e_{-l}\>$ are independent as r.v.s on $H^{-1-}$. Hence, by \eqref{prop-gradient-cylinder.1}, for all $k\in \Z^2_0$ with $|k|$ big enough,
  $$ \aligned
  \int \< \sigma_k\cdot \nabla \omega, D F \>^2\,\d\mu &= 2\pi^2 \sum_{l, m\in \Lambda} C_{k,l} C_{k,m} \int (\partial_l f)(\Pi_\Lambda \omega) (\partial_m f )(\Pi_\Lambda \omega) \,\d\mu \\
  &\hskip13pt \times \int \<\omega, e_k e_{-l}\> \<\omega, e_k e_{-m}\> \,\d\mu.
  \endaligned $$
  Note that
  $$\int \<\omega, e_k e_{-l}\> \<\omega, e_k e_{-m}\> \,\d\mu = \int_{\T^2} e_k^2 e_{-l}e_{-m} \,\d x,$$
thus, for any $M, N\in \N,\, M>N \gg 1$,
  \begin{equation}\label{prop-gradient-cylinder.2}
  \aligned
  \sum_{N< |k|\leq M}\int \< \sigma_k\cdot \nabla \omega, D F \>^2\,\d\mu &= 2\pi^2 \sum_{l, m\in \Lambda} \int (\partial_l f)(\Pi_\Lambda \omega) (\partial_m f )(\Pi_\Lambda \omega) \,\d\mu \\
  &\hskip13pt \times \sum_{N< |k|\leq M}  C_{k,l} C_{k,m} \int_{\T^2} e_k^2 e_{-l}e_{-m} \,\d x.
  \endaligned
  \end{equation}

Since $C_{-k,l} = -C_{k,l}$ and $e_k^2 + e_{-k}^2 \equiv 2$, we have
  $$\aligned \sum_{N< |k|\leq M}  C_{k,l} C_{k,m} e_k^2 &= \sum_{N< |k|\leq M, k\in \Z^2_+}  \big(C_{k,l} C_{k,m} e_k^2 + C_{-k,l} C_{-k,m} e_{-k}^2\big) \\
  &=  \sum_{N< |k|\leq M, k\in \Z^2_+} 2C_{k,l} C_{k,m} = \sum_{N< |k|\leq M} C_{k,l} C_{k,m}.
  \endaligned$$
Therefore, by \eqref{lem-2-1.1},
  \begin{equation}\label{prop-gradient-cyl.3}
  \sum_{N< |k|\leq M}  C_{k,l} C_{k,m} \int_{\T^2} e_k^2 e_{-l}e_{-m} \,\d x = \delta_{l,m} \sum_{N< |k|\leq M}  C_{k,l}^2 = \frac12 \delta_{l,m} (a_M- a_N) |l|^2.
  \end{equation}
Substituting this result into \eqref{prop-gradient-cylinder.2} leads to
  $$\sum_{N< |k|< M}\int \< \sigma_k\cdot \nabla \omega, D F \>^2\,\d\mu = \pi^2 (a_M- a_N) \sum_{l\in \Lambda} |l|^2 \int \big[ (\partial_l f)(\Pi_\Lambda \omega) \big]^2 \,\d\mu .$$
The definition of $a_N$ immediately implies the desired assertion.
\end{proof}

Finally we can prove in the case $\gamma >2$ the existence of equations to the forward Kolmogorov equation \eqref{Kolmog-eq}. The proof relies on the convergence result of the drift part proved in Section \ref{appendix-drift}, see Theorem \ref{thm-app}.

\begin{proof}[Proof of Theorem \ref{main-result}(iii)]
Fix a test functional $F \in \mathcal{FC}_{P}$ which can also be considered as a function on $H_N$ for all $N$ such that $\Lambda_N \supset \Lambda$. Let $\alpha\in C^1([0,T])$ with $\alpha(T)=0$. Multiplying both sides of \eqref{Kolmogorov-eq} by $\alpha F$ and integrating by parts on $[0,T]\times H_N$ yields
  $$\aligned
  0= &\ \alpha(0)\int_{H_N} F \rho^N_0 \,\d\mu_N + \int_0^T \!\! \int_{H_N} \rho^N_s \big( \alpha'(s) F - \alpha(s)\<b_N, \nabla_N F\> \big) \,\d\mu_N\d s  \\
  & - \frac12 \sum_{|k|\leq N} \int_0^T \!\! \int_{H_N} \alpha(s) \big\<G_N^k, \nabla_N F\big\>_{H_N} \big\<G_N^k, \nabla_N \rho^N_s\big\>_{H_N} \,\d\mu_N\d s.
  \endaligned$$
Changing $N$ to $N_i$ obtained in the proof of Theorem \ref{thm-1}, this is equivalent to
  \begin{equation}\label{thm-existence-Kol-eq.1}
  \aligned
  0= &\ \alpha(0)\int F \rho^{N_i}_0 \,\d\mu + \int_0^T \!\! \int \rho^{N_i}_s \big( \alpha'(s) F - \alpha(s) \big\< u(\Pi_{N_i} \omega) \cdot \nabla(\Pi_{N_i} \omega), D F \big\> \big) \,\d\mu\d s  \\
  &- \frac12 \sum_{|k|\leq N_i} \int_0^T \!\! \int \alpha(s) \big\< \sigma_k \cdot \nabla(\Pi_{N_i} \omega), D F\big\> \big\< \sigma_k \cdot \nabla(\Pi_{N_i} \omega), D \rho^{N_i}_s \big\> \,\d\mu\d s.
  \endaligned
  \end{equation}

We want to show that all the terms on the r.h.s. converge to the corresponding ones.  The assertion (a) in the proof of Theorem \ref{thm-1} enables us to let $i \to \infty$ in the first two terms. For the third one involving the drift part, we have
  \begin{equation*}
  \aligned
  &\hskip13pt \int_0^T \!\! \int \rho^{N_i}_s \alpha(s) \big\< u(\Pi_{N_i} \omega) \cdot \nabla(\Pi_{N_i} \omega), D F \big\> \,\d\mu\d s - \int_0^T \!\! \int \rho_s\, \alpha(s) \big\< u(\omega) \cdot \nabla\omega, D F \big\>\,\d\mu\d s \\
  & = \int_0^T \!\! \int \rho^{N_i}_s\, \alpha(s) \Big(\big\< u(\Pi_{N_i} \omega) \cdot \nabla(\Pi_{N_i} \omega), D F \big\> - \big\< u(\omega) \cdot \nabla\omega, D F \big\> \Big)\,\d\mu\d s \\
  &\hskip13pt + \int_0^T \!\! \int \big(\rho^{N_i}_s - \rho_s \big) \alpha(s) \big\< u(\omega) \cdot \nabla\omega, D F \big\>  \,\d\mu\d s \\
  & =: I^{N_i}_1 + I^{N_i}_2.
  \endaligned
  \end{equation*}
Under our assumption on $F$, we deduce from Theorem \ref{thm-app} that $I^{N_i}_1$ tends to 0 as $i\to \infty$, since $\big\{ \rho^{N_i} \big\}_{i\in \N}$ is bounded in $ L^\infty \big([0,T], L^2(H^{-1-}, \mu) \big)$. Still by Theorem \ref{thm-app}, we have $\big\< u(\omega) \cdot \nabla\omega, D F \big\> \in L^2(H^{-1-}, \mu)$, hence the second term also converges to 0 as  $i\to \infty$, thanks to (a) in the proof of Theorem \ref{thm-1}. Therefore, we obtain the convergence of the third integral in \eqref{thm-existence-Kol-eq.1}.

Finally, we deal with the last integral in \eqref{thm-existence-Kol-eq.1}. Fix any $n\in \N$, we denote by
  $$\aligned
  J^{N_i}_1 = & \sum_{k\in \Lambda_n}  \int_0^T \!\! \int \alpha(s) \big\< \sigma_k \cdot \nabla(\Pi_{N_i} \omega), D F \big\> \big\< \sigma_k \cdot \nabla(\Pi_{N_i} \omega), D \rho^{N_i}_s \big\> \,\d\mu\d s\\
  & -  \sum_{k\in \Lambda_n} \int_0^T \!\! \int \alpha(s) \< \sigma_k \cdot \nabla \omega, D F\> \< \sigma_k \cdot \nabla \omega, D \rho_s \> \,\d\mu\d s
  \endaligned$$
and
  $$\aligned
  J^{N_i}_2 =& \sum_{k\in \Lambda_{N_i} \setminus \Lambda_n}  \int_0^T \!\! \int \alpha(s) \big\< \sigma_k \cdot \nabla(\Pi_{N_i} \omega), D F \big\> \big\< \sigma_k \cdot \nabla(\Pi_{N_i} \omega), D \rho^{N_i}_s \big\> \,\d\mu\d s \\
  & - \sum_{k\in \Lambda_n^c} \int_0^T \!\! \int \alpha(s) \< \sigma_k \cdot \nabla \omega, D F\> \< \sigma_k \cdot \nabla \omega, D \rho_s \> \,\d\mu\d s ,
  \endaligned$$
where $\Lambda_n^c= \Z^2 \setminus \Lambda_n$. Since $n$ is fixed and $k\in \Lambda_n$, Lemma  \ref{lem-3} implies that, for all $i$ big enough,
  $$\big\< \sigma_k \cdot \nabla(\Pi_{N_i} \omega), D F\big\>  = \< \sigma_k \cdot \nabla \omega, D F\> \in \mathcal{FC}_P.$$
Therefore,
  $$J^{N_i}_1 = \sum_{k\in \Lambda_n}  \int_0^T \!\! \int \alpha(s) \big\< \sigma_k \cdot \nabla \omega, D F\big\> \Big(\big\< \sigma_k \cdot \nabla(\Pi_{N_i} \omega), D \rho^{N_i}_s \big\> - \< \sigma_k \cdot \nabla \omega, D \rho_s \> \Big) \,\d\mu\d s,$$
which tends to 0 by (b) in the proof of Theorem \ref{thm-1} and \eqref{thm-1.2}. Regarding the term $J^{N_i}_2$, by Cauchy's inequality, \eqref{projection} and the estimates \eqref{estimate-3}, \eqref{thm-1.0}, we have
  $$\aligned
  \big|J^{N_i}_2 \big| &\leq \big(1+ \|\rho_0 \|_{L^2(\mu)} \big) T\|\alpha\|_\infty \bigg[\sum_{k\in \Lambda_{N_i} \setminus \Lambda_n} \int \big\< \sigma_k \cdot \nabla(\Pi_{N_i} \omega), D F\big\> ^2 \,\d\mu \bigg]^{1/2} \\
  & \hskip13pt + \|\rho_0 \|_{L^2(\mu)} T\|\alpha\|_\infty \bigg[\sum_{k\in \Lambda_n^c} \int \< \sigma_k \cdot \nabla \omega, D F \>^2 \,\d\mu \bigg]^{1/2}.
  \endaligned   $$
By the equality at the end of the proof of Proposition \ref{prop-gradient-cylinder},
  $$\sum_{k\in \Lambda_n^c} \int \< \sigma_k \cdot \nabla \omega, D F \>^2 \,\d\mu \leq C \sum_{k\in \Lambda_n^c} \frac{1}{|k|^{2\gamma -2}}.$$
Following the proof of Proposition \ref{prop-gradient-cylinder}, one can show that
  $$\sup_{i\geq 1} \sum_{k\in \Lambda_{N_i} \setminus \Lambda_n} \int \big\< \sigma_k \cdot \nabla(\Pi_{N_i} \omega), D F \big\>^2 \,\d\mu \leq C \sum_{k\in \Lambda_n^c} \frac{1}{|k|^{2\gamma -2}}.$$
Therefore,
  $$ \sup_{i\geq 1}  \big|J^{N_i}_2 \big| \leq 2 \big(1+ \|\rho_0 \|_{L^2(\mu)} \big) T\|\alpha\|_\infty C \sum_{k\in \Lambda_n^c} \frac{1}{|k|^{2\gamma -2}} $$
which vanishes as $n\to \infty$. Summarizing these arguments we conclude that the last term in \eqref{thm-existence-Kol-eq.1} also converges to the corresponding quantity.
\end{proof}

\section{The case $\gamma=2$} \label{proof-second-main-result}

Now we turn to prove Theorem \ref{main-result-2} for which we need some preparations. We find that there is a small technical problem which prevents us from applying directly the approximation arguments in Section \ref{sec-finite-Kol-eq} and those results in Theorem \ref{thm-1}. The reason is that we are unable to prove a decomposition formula, similar to that in Proposition \ref{prop-operator}, for the diffusion part of \eqref{approx-generator}. Therefore, we slightly modify the approximating operator $\L_N$ defined in \eqref{approx-generator} as follows:
  \begin{equation}\label{approx-generator-1}
  \tilde \L_N \phi(\xi)= \frac12 \sum_{k\in \Gamma_{N}} \Big\<G_N^k, \nabla_N \big\<G_N^k, \nabla_N \phi\big\>_{H_N} \Big\>_{\! H_N}(\xi) - \<b_N, \nabla_N \phi\>_{H_N}(\xi),
  \end{equation}
where
  $$\Gamma_{N} = \big\{ k\in \Z^2_0: |k| \leq N/3 \big\}.$$
The idea for this modification will be clear in view of the beginning of the proof of Theorem \ref{main-result-2} given below. Indeed, we can replace $1/3$ in the definition of $\Gamma_{N}$ by any constant $\theta\in(0, 1/2)$.

Assume the condition \eqref{projection}. We consider the new finite dimensional Kolmogorov equations
  \begin{equation}\label{Kolmogorov-eq-new}
  \partial_t \rho^N_t = \tilde \L_N^\ast \rho^N_t, \quad \rho^N|_{t=0}= \rho^N_0\in C_P^\infty(H_N),
  \end{equation}
and repeat the discussions in Sections \ref{sec-finite-Kol-eq} and \ref{proof-first-main-result}. All the arguments hold true in this case with little change, and we obtain a measurable function $\rho\in L^\infty \big( [0,T], L^2( H^{-1-}) \big)$, satisfying the properties (i) and (ii) in Theorem \ref{main-result}. To show that $\rho$ is in fact a trivial function, we need more preparations.

Recall the definition \eqref{real-basis} of the real basis of $L^2(\T^2)$. We have $\sigma_k(x) = \frac1{\sqrt{2}} \frac{k^\perp}{|k|^2} e_k(x),\, k\in \Z^2_0$. Recall also the definition of $C_{k,l}$ in \eqref{constant} and keep in mind that $\gamma =2$. For a cylindrical function $F(\omega)= f(\Pi_\Lambda \omega)$ with some $\Lambda \Subset \Z_0^2$, we define
  $$\L_N^{0} F(\omega) = \frac12 \sum_{k\in \Gamma_{N}} \big\< \sigma_k\cdot \nabla\omega, D \< \sigma_k\cdot \nabla\omega, D F\> \big\>.$$

\begin{lemma}\label{lem-2-2}
We have
  \begin{equation}\label{lem-2-2.1}
  \aligned
  \L_N^{0} F(\omega) &= \pi^2 \sum_{k\in \Gamma_{N}} \sum_{l, m\in \Lambda} C_{k,l} C_{k,m}\, f_{l,m}(\omega) \<\omega, e_k e_{-l}\> \<\omega, e_k e_{-m}\> \\
  &\hskip11pt -\pi^2 \sum_{k\in \Gamma_{N}} \sum_{l\in \Lambda} C_{k,l}^2 \, f_l(\omega) \big\< \omega, e_k^2 e_l \big\> ,
  \endaligned
  \end{equation}
where we write $f_l(\omega)= (\partial_l f)(\Pi_\Lambda \omega)$ and $f_{l,m}(\omega)= (\partial_{l,m} f)(\Pi_\Lambda \omega)$ to simplify notations.
\end{lemma}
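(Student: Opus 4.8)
The plan is to reduce everything to two applications of Lemma \ref{lem-2-1} together with a single Leibniz expansion. First I would set $G_k(\omega) := \<\sigma_k\cdot\nabla\omega, D F\>$, so that $\L_N^{0} F = \frac12\sum_{k\in\Gamma_{N}}\<\sigma_k\cdot\nabla\omega, D G_k\>$. The inner bracket $G_k$ was essentially already computed in the proof of Proposition \ref{prop-gradient-cylinder}: by Lemma \ref{lem-2-1} and the integration-by-parts convention $\<\sigma_k\cdot\nabla\omega, e_l\> = -\<\omega, \sigma_k\cdot\nabla e_l\> = -\sqrt2\pi\, C_{k,l}\<\omega, e_k e_{-l}\>$, one has
$$G_k(\omega) = -\sqrt2\pi\sum_{l\in\Lambda} C_{k,l}\, f_l(\omega)\,\<\omega, e_k e_{-l}\>.$$
This exhibits $G_k$ as a finite sum of products of the cylindrical factor $f_l = (\partial_l f)\circ\Pi_\Lambda$ and the \emph{linear} functional $\omega\mapsto\<\omega, e_k e_{-l}\>$.

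Next I would differentiate $G_k$ by the product rule. For the smooth factor, $D f_l = \sum_{m\in\Lambda} f_{l,m}\, e_m$, while for the linear factor the gradient is the function itself, $D\<\omega, e_k e_{-l}\> = e_k e_{-l}$. Pairing against $\sigma_k\cdot\nabla\omega$ then splits $\<\sigma_k\cdot\nabla\omega, D G_k\>$ into two contributions. In the first, the derivative falls on $f_l$, and applying Lemma \ref{lem-2-1} once more to $D f_l$ gives $\<\sigma_k\cdot\nabla\omega, D f_l\> = -\sqrt2\pi\sum_{m\in\Lambda} C_{k,m}\, f_{l,m}\<\omega, e_k e_{-m}\>$; this produces exactly $2\pi^2\sum_{l,m}C_{k,l}C_{k,m}\,f_{l,m}\<\omega, e_k e_{-l}\>\<\omega, e_k e_{-m}\>$, which after the factor $\tfrac12$ and the sum over $k\in\Gamma_{N}$ matches the first line of \eqref{lem-2-2.1}.

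The computational heart lies in the second contribution, where the derivative falls on the linear functional and I must evaluate $\<\sigma_k\cdot\nabla\omega, e_k e_{-l}\> = -\<\omega, \sigma_k\cdot\nabla(e_k e_{-l})\>$. Using $\nabla e_j = 2\pi j\, e_{-j}$ from Lemma \ref{lem-2-1} and $\sigma_k = \tfrac1{\sqrt2}\tfrac{k^\perp}{|k|^2}e_k$, the Leibniz rule gives
$$\sigma_k\cdot\nabla(e_k e_{-l}) = \tfrac1{\sqrt2}\tfrac{k^\perp}{|k|^2}e_k\cdot\big(2\pi k\, e_{-k}e_{-l} - 2\pi l\, e_k e_l\big).$$
The decisive point is that $k^\perp\cdot k = 0$, so the first term drops out entirely, leaving $\sigma_k\cdot\nabla(e_k e_{-l}) = -\sqrt2\pi\, C_{k,l}\, e_k^2 e_l$ and hence $\<\sigma_k\cdot\nabla\omega, e_k e_{-l}\> = \sqrt2\pi\, C_{k,l}\<\omega, e_k^2 e_l\>$. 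This yields the single sum $-2\pi^2\sum_{l\in\Lambda} C_{k,l}^2\, f_l\<\omega, e_k^2 e_l\>$, which after $\tfrac12$ and summation over $\Gamma_{N}$ is the second line of \eqref{lem-2-2.1}.

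The only genuine obstacle is bookkeeping: applying the product rule with the correct gradient of the linear functional, and differentiating the two factors $e_k$ and $e_{-l}$ separately in $\sigma_k\cdot\nabla(e_k e_{-l})$. Everything else is forced by Lemma \ref{lem-2-1} and the orthonormality relations; in particular, it is the cancellation $k^\perp\cdot k=0$ that makes the diffusion term close on expressions of the form $\<\omega, e_k^2 e_l\>$, and this structure is precisely what the decomposition of Proposition \ref{prop-operator} will later exploit.
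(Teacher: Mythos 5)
Your proof is correct and follows essentially the same route as the paper: compute $\<\sigma_k\cdot\nabla\omega, DF\>$ via Lemma \ref{lem-2-1}, apply the product rule to its gradient, and pair again with $\sigma_k\cdot\nabla\omega$, handling the term $\<\sigma_k\cdot\nabla\omega, e_k e_{-l}\>$ by the identity $\sigma_k\cdot\nabla(e_k e_{-l}) = -\sqrt{2}\pi C_{k,l}\, e_k^2 e_l$. Your explicit Leibniz computation with the cancellation $k^\perp\cdot k=0$ is exactly what the paper's terse step $\<\sigma_k\cdot\nabla\omega, e_k e_{-l}\> = -\sqrt{2}\pi C_{k,-l}\<\omega, e_k^2 e_l\>$ leaves implicit, so you have merely made the paper's argument more self-contained.
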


\begin{proof}
Note that $DF(\omega )= \sum_{l\in \Lambda} (\partial_l f)(\Pi_\Lambda \omega) e_l = \sum_{l\in \Lambda} f_l(\omega) e_l$; therefore, by Lemma \ref{lem-2-1},
  $$\aligned
  \<\sigma_k \cdot \nabla\omega, D F\> &= \sum_{l\in \Lambda} f_l(\omega) \<\sigma_k \cdot \nabla\omega, e_l \> = - \sum_{l\in \Lambda} f_l(\omega) \<\omega, \sigma_k \cdot \nabla e_l \> \\
  &= -\sqrt{2} \pi \sum_{l\in \Lambda} C_{k,l}\, f_l(\omega) \<\omega, e_k e_{-l}\>.
  \endaligned$$
Furthermore,
  $$\aligned
  D \<\sigma_k \cdot \nabla\omega, D F\> &= -\sqrt{2}\pi \sum_{l\in \Lambda} C_{k,l} \big( \<\omega, e_k e_{-l}\> D [ f_l(\omega) ] + f_l(\omega) e_k e_{-l} \big) \\
  &= -\sqrt{2}\pi \sum_{l, m\in \Lambda} C_{k,l} \<\omega, e_k e_{-l}\> f_{l,m}(\omega) e_m  - \sqrt{2}\pi \sum_{l\in \Lambda} C_{k,l}\, f_l(\omega) e_k e_{-l}.
  \endaligned$$
As a result,
  \begin{equation}\label{lem-2-2.2}
  \aligned
  \big\< \sigma_k\cdot \nabla\omega, D \< \sigma_k\cdot \nabla\omega, D F\> \big\> & = -\sqrt{2}\pi \sum_{l, m\in \Lambda} C_{k,l}\, f_{l,m}(\omega) \<\omega, e_k e_{-l}\> \< \sigma_k\cdot \nabla\omega, e_m \> \\
  &\hskip11pt - \sqrt{2}\pi \sum_{l\in \Lambda} C_{k,l}\, f_l(\omega) \< \sigma_k\cdot \nabla\omega, e_k e_{-l} \>.
  \endaligned
  \end{equation}
By Lemma \ref{lem-2-1}, we have $\< \sigma_k\cdot \nabla\omega, e_m \> = - \sqrt{2}\pi C_{k,m} \<\omega, e_k e_{-m}\>$ and
  $$\< \sigma_k\cdot \nabla\omega, e_k e_{-l} \> = - \<\omega, \sigma_k\cdot \nabla(e_k e_{-l}) \> = - \sqrt{2}\pi C_{k,-l} \big\<\omega, e_k^2 e_l \big\> = \sqrt{2}\pi C_{k,l} \big\<\omega, e_k^2 e_l \big\>.$$
Substituting these facts into \eqref{lem-2-2.2} and summing over $k$ yield the desired result.
\end{proof}

We shall rewrite $\L_N^{0} F(\omega)$ as the sum of two parts, in which one part is convergent while the other is in general divergent.

\begin{proposition}\label{prop-operator}
It holds that
  \begin{equation}\label{prop-operator.1}
  \aligned
  \L_N^{0} F(\omega) &= \pi^2 \sum_{l, m\in \Lambda} f_{l,m}(\omega) \sum_{k\in \Gamma_{N}} C_{k,l} C_{k,m} \big( \<\omega, e_k e_{-l}\> \<\omega, e_k e_{-m}\> - \delta_{l,m} \big) \\
  &\hskip11pt +\frac12 \pi^2 b_N \sum_{l\in \Lambda} |l|^2 \big[f_{l,l}(\omega) - f_l(\omega) \< \omega, e_l \>\big],
  \endaligned
  \end{equation}
where
  $$b_N = \sum_{k\in \Gamma_{N}} \frac1{|k|^2} .$$
Moreover, for any $l,m\in \Z_0^2$, the quantity
  \begin{equation}\label{prop-operator.2}
  R_{l,m}(N) =\sum_{k\in \Gamma_{N}} C_{k,l} C_{k,m} \big( \<\omega, e_k e_{-l}\> \<\omega, e_k e_{-m}\> - \delta_{l,m} \big)
  \end{equation}
is a Cauchy sequence in $L^p(H^{-1-}, \mu)$ for any $p>1$.
\end{proposition}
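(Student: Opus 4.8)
\textbf{Part 1: the decomposition \eqref{prop-operator.1}.} My plan is to start from the expression for $\L_N^{0}F(\omega)$ in Lemma \ref{lem-2-2} and isolate, inside each of its two sums, a centred ``fluctuation'' term and a ``mean'' term. In the first (quadratic) sum I write $\<\omega, e_k e_{-l}\>\<\omega, e_k e_{-m}\> = \big(\<\omega, e_k e_{-l}\>\<\omega, e_k e_{-m}\> - \delta_{l,m}\big) + \delta_{l,m}$; the centred part reproduces exactly $\pi^2\sum_{l,m\in\Lambda} f_{l,m}(\omega) R_{l,m}(N)$, while the $\delta_{l,m}$ part collapses the double sum to its diagonal $\pi^2\sum_{l\in\Lambda} f_{l,l}(\omega)\sum_{k\in\Gamma_N} C_{k,l}^2$, which by \eqref{lem-2-1.1} (whose proof applies verbatim on the ball $\Gamma_N$, with $\gamma=2$) equals $\frac12\pi^2 b_N\sum_{l\in\Lambda}|l|^2 f_{l,l}(\omega)$. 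For the second (linear) sum I would exploit the symmetry $k\mapsto -k$ of $\Gamma_N$: since $C_{-k,l}^2=C_{k,l}^2$ and $e_k^2+e_{-k}^2\equiv 2$, symmetrising $\sum_{k\in\Gamma_N} C_{k,l}^2\<\omega,e_k^2 e_l\>$ replaces $e_k^2$ by its average $1$, giving $\sum_{k\in\Gamma_N} C_{k,l}^2\<\omega,e_l\>=\frac12 b_N|l|^2\<\omega,e_l\>$. Collecting the two mean terms yields the second line of \eqref{prop-operator.1}.

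\textbf{Part 2: reduction to the second chaos.} For the Cauchy property I would pass to the Wiener chaos picture. Writing $\xi_j=\<\omega,e_j\>$ (i.i.d.\ standard Gaussians) and expanding $e_k e_{-l}$ in the real basis — which, exactly as in the proof of Proposition \ref{prop-gradient-cylinder}, is a combination of at most two elements $e_{\pm k\pm l}$ — each factor $\<\omega,e_k e_{-l}\>$ becomes a linear combination of two $\xi_j$'s. Hence $X_k:=\<\omega,e_k e_{-l}\>\<\omega,e_k e_{-m}\>$ is a homogeneous quadratic polynomial in the $\xi_j$, so $X_k-\E X_k$ lies in the second Wiener chaos $\mathcal C_2$. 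I would first dispose of the deterministic discrepancy: $\E X_k-\delta_{l,m}=\int_{\T^2}(e_k^2-1)\,e_{-l}e_{-m}\,\d x$, and since $e_k^2-1$ carries only the frequency $\pm 2k$ while $e_{-l}e_{-m}$ has bounded frequencies, this vanishes once $|k|$ exceeds a threshold depending on $l,m$. Therefore the degree-$0$ part of $R_{l,m}(N)$ stabilises and contributes nothing to the tail differences $R_{l,m}(M)-R_{l,m}(N)$, and it suffices to treat $Y(N):=\sum_{k\in\Gamma_N} C_{k,l}C_{k,m}\,(X_k-\E X_k)\in\mathcal C_2$.

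\textbf{Part 2: the $L^2$ estimate and upgrade to $L^p$.} I would compute $\E\big[(Y(M)-Y(N))^2\big]=\sum_{k,k'} C_{k,l}C_{k,m}C_{k',l}C_{k',m}\,\mathrm{Cov}(X_k,X_{k'})$ over $k,k'\in\Gamma_M\setminus\Gamma_N$ by the Isserlis/Wick formula, so that $\mathrm{Cov}(X_k,X_{k'})=\E[AC]\E[BD]+\E[AD]\E[BC]$ with $A=\<\omega,e_ke_{-l}\>$, $B=\<\omega,e_ke_{-m}\>$, $C=\<\omega,e_{k'}e_{-l}\>$, $D=\<\omega,e_{k'}e_{-m}\>$. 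The decisive point is near-orthogonality: each pairing is an inner product such as $\int_{\T^2} e_k e_{k'} e_{-l}^2\,\d x$, which is nonzero only when $k'$ lies within bounded distance of $\pm k$; thus for fixed $k$ only $O(1)$ indices $k'$ contribute, where moreover $|C_{k',l}C_{k',m}|\asymp|C_{k,l}C_{k,m}|$. Bounding $\mathrm{Cov}(X_k,X_{k'})=O(1)$ and using $(C_{k,l}C_{k,m})^2\le |l|^2|m|^2/|k|^4$, I obtain $\E\big[(Y(M)-Y(N))^2\big]\lesssim |l|^2|m|^2\sum_{|k|>N/3}|k|^{-4}\to 0$, so $Y(N)$ is Cauchy in $L^2$. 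Since $R_{l,m}(N)$ belongs to the fixed subspace $\mathcal C_0\oplus\mathcal C_2$, hypercontractivity of the Ornstein--Uhlenbeck semigroup gives $\|\cdot\|_{L^p}\le C_p\|\cdot\|_{L^2}$ for $p\ge 2$ (and $L^2\subset L^p$ for $1<p<2$), upgrading $L^2$-Cauchy to $L^p$-Cauchy for every $p>1$.

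\textbf{Main obstacle.} The delicate step — and the very source of the $\gamma=2$ phenomenon — is this last estimate: one must verify carefully that only the diagonal pairs $k'\approx\pm k$ survive in the covariance and then count powers. The underlying dichotomy is that the diagonal coefficient sum $\sum_k C_{k,l}^2\sim\sum_k|k|^{-2}$ diverges (producing the logarithmic factor $b_N$ in \eqref{prop-operator.1}), whereas the fluctuation sum $\sum_k(C_{k,l}C_{k,m})^2\sim\sum_k|k|^{-4}$ converges; it is precisely this gap in summability that makes the mean part explode while the centred remainder $R_{l,m}(N)$ remains bounded and convergent.
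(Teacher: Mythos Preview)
Your proof is correct and follows essentially the same route as the paper: the decomposition in Part~1 is identical, and for Part~2 the paper likewise reduces to $L^2$ via hypercontractivity (citing \cite[Theorem~I.22]{Simon}), expands with the Isserlis--Wick formula, and uses the near-orthogonality constraint $k'\in\{\pm k\}+O(1)$ on the pairings $\int e_k e_{k'} e_{-l}^2\,\d x$ (and $\int e_k e_{k'} e_{-l} e_{-m}\,\d x$) to bound the tail by $\sum_{|k|>N/3}|k|^{-4}$. The only organizational difference is that the paper splits the $L^2$ estimate into the cases $l\neq m$ and $l=m$ and tracks the constant cancellations separately in each, whereas you centre $X_k$ up front and dispose of the degree-$0$ discrepancy $\E X_k-\delta_{l,m}=\int(e_k^2-1)e_{-l}e_{-m}\,\d x$ uniformly; your treatment is a mild streamlining of the same argument.
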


\begin{proof}
Here we only prove the equality \eqref{prop-operator.1}. The proof of the second assertion involves lots of detailed computations and is postponed to the appendix.

We have
  $$\aligned
  &\hskip12pt \sum_{k\in \Gamma_{N}} \sum_{l, m\in \Lambda} C_{k,l} C_{k,m}\, f_{l,m}(\omega) \<\omega, e_k e_{-l}\> \<\omega, e_k e_{-m}\> \\
  &= \sum_{l,m\in \Lambda} f_{l,m}(\omega) \sum_{k\in \Gamma_{N}} C_{k,l} C_{k,m} \big( \<\omega, e_k e_{-l}\> \<\omega, e_k e_{-m}\> -\delta_{l,m} \big) + \sum_{l\in \Lambda} f_{l,l}(\omega) \sum_{k\in \Gamma_{N}} C_{k,l}^2.
  \endaligned$$
Analogous to \eqref{lem-2-1.1}, we have
  $$\sum_{k\in \Gamma_{N}} C_{k,l}^2 =\frac12 b_N |l|^2.$$
Therefore,
  \begin{equation}\label{cor-3}
  \aligned
  &\hskip12pt \sum_{k\in \Gamma_{N}} \sum_{l,m\in \Lambda} C_{k,l} C_{k,m} f_{l,m}(\omega) \< \omega, e_{k} e_{-l} \> \< \omega, e_{k} e_{-m} \> \\
  &= \sum_{l,m\in \Lambda} f_{l,m}(\omega) \sum_{k\in \Gamma_{N}} C_{k,l} C_{k,m} \big( \< \omega, e_{k} e_{-l} \> \< \omega, e_{k} e_{-m} \> - \delta_{l,m} \big) + \frac12 b_N \sum_{l\in \Lambda} |l|^2 f_{l,l}(\omega) .
  \endaligned
  \end{equation}

Next, note that $C_{-k,l} = -C_{k,l}$ and $e_{k}^2 + e_{-k}^2 \equiv 2$ for all $k\in \Z^2_0$, we have
  \begin{equation}\label{cor-4}
  \aligned \sum_{k\in \Gamma_{N}} C_{k,l}^{2} \big\langle \omega, e_{k}^2 e_{l} \big\rangle &=  \sum_{k\in \Gamma_{N}, k\in \Z^2_+} \big[ C_{k,l}^{2} \big\langle \omega, e_{k}^2 e_{l} \big\rangle + C_{-k,l}^{2} \big\langle \omega, e_{-k}^2 e_{l} \big\rangle \big] \\
  &= \sum_{k\in \Gamma_{N}, k\in \Z^2_+} 2 C_{k,l}^{2} \<\omega, e_l\> = \frac12 b_N |l|^2 \<\omega, e_l\> .
  \endaligned
  \end{equation}
Hence,
  $$\sum_{k\in \Gamma_{N}} \sum_{l\in\Lambda} C_{k,l}^{2}f_{l}(\omega) \big\< \omega, e_{k}^2 e_{l} \big\> = \frac12 b_N \sum_{l\in\Lambda} |l|^2 f_{l}(\omega) \<\omega, e_l\>. $$
Combining this equality with \eqref{lem-2-2.1} and \eqref{cor-3} leads to the desired identity \eqref{prop-operator.1}.
\end{proof}

Applying Proposition \ref{prop-operator} to Hermite polynomials yields

\begin{proposition}\label{prop-operator-hermite}
For any fixed $\bm{n}\in \bm{N}$, we have the decomposition
  $$\L_N^{0} H_{\bm{n}}(\omega) = I_N(\omega) - C_{\bm{n}} b_N H_{\bm{n}}(\omega), $$
where the sequence $\{I_N\}_{N\geq 1}$ is convergent in $L^2(H^{-1-}, \mu)$ and $C_{\bm{n}}= \frac12 \pi^2 \sum_{l\in \Z_0^2} n_l |l|^2 <\infty$.
\end{proposition}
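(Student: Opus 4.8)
The plan is to apply the decomposition \eqref{prop-operator.1} of Proposition \ref{prop-operator} directly to the cylindrical function $F = H_{\bm n}$, and then to recognize the two resulting pieces as $I_N$ and $-C_{\bm n} b_N H_{\bm n}$ respectively. First I would write $H_{\bm n} = f\circ \Pi_\Lambda$ with $\Lambda = \{l\in\Z^2_0: n_l \ge 1\} \Subset \Z^2_0$ (finite since $|\bm n|<\infty$) and $f(\xi) = \prod_{l\in\Lambda}h_{n_l}(\xi_l)$. The factorized structure makes the partial derivatives explicit: $f_l(\omega) = h'_{n_l}(\langle\omega,e_l\rangle)\prod_{j\ne l}h_{n_j}(\langle\omega,e_j\rangle)$ and $f_{l,l}(\omega) = h''_{n_l}(\langle\omega,e_l\rangle)\prod_{j\ne l}h_{n_j}(\langle\omega,e_j\rangle)$, which is all that the second line of \eqref{prop-operator.1} involves.

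The key observation, which produces the eigenvalue $-C_{\bm n}b_N$, is that the combination $f_{l,l}(\omega) - f_l(\omega)\langle\omega,e_l\rangle$ appearing in \eqref{prop-operator.1} is exactly the one-dimensional Ornstein--Uhlenbeck operator $A g = g'' - tg'$ acting in the $l$-th coordinate. Using $A h_{n_l} = -n_l h_{n_l}$, I obtain $f_{l,l}(\omega) - f_l(\omega)\langle\omega,e_l\rangle = -n_l H_{\bm n}(\omega)$ for each $l\in\Lambda$. Substituting this into \eqref{prop-operator.1} and extending the sum from $\Lambda$ to all of $\Z^2_0$ (harmless, since $n_l = 0$ off $\Lambda$) turns the second term into $-\tfrac12\pi^2 b_N\big(\sum_{l\in\Z^2_0}n_l|l|^2\big)H_{\bm n}(\omega) = -C_{\bm n}b_N H_{\bm n}(\omega)$, with $C_{\bm n} = \tfrac12\pi^2\sum_{l\in\Z_0^2}n_l|l|^2$, which is finite because $\bm n$ has finite support.

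It then remains to set $I_N(\omega) := \pi^2\sum_{l,m\in\Lambda} f_{l,m}(\omega)\,R_{l,m}(N)$, the first term of \eqref{prop-operator.1}, and to prove its convergence in $L^2(\mu)$. Proposition \ref{prop-operator} already gives that each $R_{l,m}(N)$ is Cauchy in $L^p(\mu)$ for every $p>1$, and each $f_{l,m}(\omega)$ is a polynomial in the independent standard Gaussian variables $\{\langle\omega,e_j\rangle: j\in\Lambda\}$ under $\mu$, hence belongs to $L^q(\mu)$ for every $q<\infty$. Taking $p=q=4$ in Hölder's inequality gives
$$\big\|f_{l,m}\,(R_{l,m}(N) - R_{l,m}(M))\big\|_{L^2(\mu)} \le \|f_{l,m}\|_{L^4(\mu)}\,\|R_{l,m}(N) - R_{l,m}(M)\|_{L^4(\mu)},$$
so each summand is Cauchy in $L^2(\mu)$; since the sum over $l,m\in\Lambda$ is finite, $I_N$ is Cauchy, hence convergent, in $L^2(\mu)$.

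The genuinely hard content is not in this argument but is absorbed into the cited Cauchy property of $R_{l,m}(N)$ in Proposition \ref{prop-operator}, whose proof is deferred to the appendix. Granting that, the only real step here is the recognition of the Ornstein--Uhlenbeck eigenrelation $A h_{n} = -n h_n$ in the drift-like term, after which the splitting is automatic and the convergence of $I_N$ is a routine Hölder estimate combined with the finite integrability of Hermite polynomials against the Gaussian measure $\mu$. Accordingly, I expect the main obstacle to be purely bookkeeping: carrying the index sets $\Lambda$ and $\Gamma_N$ correctly through \eqref{prop-operator.1} so that the coefficient of $b_N H_{\bm n}$ comes out as precisely $-C_{\bm n}$.
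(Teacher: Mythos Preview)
Your proposal is correct and follows essentially the same route as the paper's own proof: write $H_{\bm n}=f\circ\Pi_\Lambda$ with $\Lambda=\{l:n_l\ge1\}$, define $I_N=\pi^2\sum_{l,m}f_{l,m}R_{l,m}(N)$ and invoke the Cauchy property of $R_{l,m}(N)$ together with the $L^q$-integrability of the polynomial factors $f_{l,m}$, then use the one-dimensional Ornstein--Uhlenbeck eigenrelation $h_{n_l}''-x_lh_{n_l}'=-n_lh_{n_l}$ to collapse the second line of \eqref{prop-operator.1} into $-C_{\bm n}b_NH_{\bm n}$. The only cosmetic difference is that you make the H\"older exponents explicit, whereas the paper leaves that implicit.
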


\begin{proof}
Let $\Lambda=\{l\in \Z_0^2: n_l\geq 1\}$ and $f(x)= \prod_{l\in \Lambda} h_{n_l}(x_l),\, x\in \R^\Lambda$. Then $H_{\bm{n}}(\omega) = f\circ \Pi_\Lambda (\omega)$. In view of the decomposition in \eqref{prop-operator.1}, it is natural to set
  $$I_N(\omega) =\pi^2 \sum_{l, m\in \Lambda} f_{l,m}(\omega) R_{l,m}(N).$$
Since $f(x)$ is a polynomial on $\R^\Lambda$, it is clear that $f_{l,m}(\omega) = (\partial_{l,m} f)(\Pi_\Lambda \omega)$ is integrable of any order $p>1$. Combining this with the last assertion of Proposition \ref{prop-operator} yields the convergence property of $I_N$.

To obtain the second part of the decomposition, we note that, for any $l\in \Lambda$,
  $$\aligned
  \frac{\partial^2}{\partial x_l^2}f - x_l \frac{\partial}{\partial x_l}f &= \bigg( \prod_{j\in \Lambda\setminus \{l\}} h_{n_j}(x_j) \bigg) \big(h''_{n_l}(x_l) - x_l h'_{n_l}(x_l)\big) \\
  &= \bigg( \prod_{j\in \Lambda\setminus \{l\}} h_{n_j}(x_j) \bigg) \big( -n_l h_{n_l}(x_l) \big) = -n_l f(x).
  \endaligned$$
This immediately gives us the desired result.
\end{proof}

Finally we can present

\begin{proof}[Proof of Theorem \ref{main-result-2}]
As mentioned at the beginning of this subsection, we can construct a  sequence of functions $\rho^{N}$, which solve \eqref{Kolmogorov-eq-new} and contain a subsequence $\rho^{N_i}$ converging weakly-$\ast$ to some limit $\rho\in L^\infty \big([0,T], L^2(H^{-1-}, \mu)\big)$. We shall prove the projections of $\rho$ on nontrivial Hermite polynomials vanish.

Let $F= H_{\bm{n}}$ for some $\bm{n}\in \bm{N}$ and $\alpha\in C^1([0,T],\R)$ with $\alpha(T)=0$. Similar to the proof of Theorem \ref{main-result}, we still have \eqref{thm-existence-Kol-eq.1}, with the only difference of summing over $k\in \Gamma_{N_i}$. We integrate by parts the last integral in \eqref{thm-existence-Kol-eq.1} and obtain
  \begin{equation}\label{proof.1}
  \aligned
  0= &\ \alpha(0)\int H_{\bm{n}} \rho^{N_i}_0 \,\d\mu + \int_0^T \!\! \int \rho^{N_i}_s \big( \alpha'(s) H_{\bm{n}} - \alpha(s) \big\< u(\Pi_{N_i} \omega) \cdot \nabla(\Pi_{N_i} \omega), D H_{\bm{n}} \big\> \big) \,\d\mu\d s  \\
  & + \frac12 \sum_{k\in \Gamma_{N_i}} \int_0^T \!\! \int \alpha(s) \rho^{N_i}_s\, \Big\< \sigma_k \cdot \nabla(\Pi_{N_i} \omega), D \big\< \sigma_k \cdot \nabla(\Pi_{N_i} \omega), D H_{\bm{n}}\big\> \Big\> \,\d\mu\d s.
  \endaligned
  \end{equation}
For the given $\bm{n}\in \bm{N}$, let $\Lambda = \{l\in \Z^2_0: n_l\geq 1\} $. In this case, we say that $H_{\bm{n}}$ is $H_\Lambda$-measurable. Of course, $H_{\bm{n}}$ is also $H_{\Lambda'}$-measurable for any $\Lambda' \supset \Lambda$. When $i$ is big enough, we have $\Lambda \subset \Gamma_{N_i} = \big\{ k\in \Z_0^2: |k| \leq N_i/3 \big\}$. Then, similar to Lemma \ref{lem-3}, for all $k\in \Gamma_{N_i}$,
  $$\big\< \sigma_k \cdot \nabla(\Pi_{N_i} \omega), D H_{\bm{n}}\big\> = - \big\< \Pi_{N_i} \omega, \sigma_k \cdot \nabla (D H_{\bm{n}}) \big\> = - \big\< \omega, \sigma_k \cdot \nabla (D H_{\bm{n}}) \big\> = \< \sigma_k \cdot \nabla \omega, D H_{\bm{n}} \>.$$
We see that $\< \sigma_k \cdot \nabla \omega, D H_{\bm{n}} \>$ is $H_{\Lambda_{2N_i/3}}$-measurable. In the same way, we have
  $$\aligned
  \Big\< \sigma_k \cdot \nabla(\Pi_{N_i} \omega), D \big\< \sigma_k \cdot \nabla(\Pi_{N_i} \omega), D H_{\bm{n}}\big\> \Big\> &= \Big\< \sigma_k \cdot \nabla(\Pi_{N_i} \omega), D \< \sigma_k \cdot \nabla \omega, D H_{\bm{n}}\> \Big\> \\
  &= \big\< \sigma_k \cdot \nabla \omega, D\< \sigma_k \cdot \nabla\omega, D H_{\bm{n}}\> \big\>,
  \endaligned$$
which is $H_{\Lambda_{N_i}}$-measurable. Therefore,
  $$\frac12 \sum_{k\in \Gamma_{N_i}} \Big\< \sigma_k \cdot \nabla(\Pi_{N_i} \omega), D \big\< \sigma_k \cdot \nabla(\Pi_{N_i} \omega), D H_{\bm{n}}\big\> \Big\> = \mathcal{L}_{N_i}^0 H_{\bm{n}}(\omega).$$
By Proposition \ref{prop-operator-hermite}, we can rewrite \eqref{proof.1} as
  \begin{equation}\label{proof.3}
  \aligned
  0= &\ \alpha(0)\int H_{\bm{n}} \rho^{N_i}_0 \,\d\mu + \int_0^T \!\! \int \rho^{N_i}_s \big( \alpha'(s) H_{\bm{n}} - \alpha(s) \big\< u(\Pi_{N_i} \omega) \cdot \nabla(\Pi_{N_i} \omega), D H_{\bm{n}} \big\> \big) \,\d\mu\d s  \\
  & + \int_0^T \!\! \int \alpha(s) \rho^{N_i}_s I_{N_i} \,\d\mu\d s - C_{\bm{n}} b_{N_i} \int_0^T \!\! \int \alpha(s) \rho^{N_i}_s H_{\bm{n}} \,\d\mu\d s.
  \endaligned
  \end{equation}

Repeating the arguments in the proof Theorem \ref{main-result}(iii), we see that all the integrals in the first line of \eqref{proof.3} are convergent to the corresponding terms. Moreover, since $I_{N_i}$ converges in $L^2(H^{-1-}, \mu)$, the first integral in the second line is also convergent. One the other hand, the weak-$\ast$ convergence of $\rho^{N_i}$ implies
  $$\lim_{i\to \infty} \int_0^T \!\! \int \alpha(s) \rho^{N_i}_s H_{\bm{n}} \,\d\mu\d s = \int_0^T \!\! \int \alpha(s) \rho_s H_{\bm{n}} \,\d\mu\d s.$$
Since $b_{N_i}$ tends to infinity, the limit above must be 0. By the arbitrariness of $\alpha$ we deduce that, for any $\bm{n}\in \bm{N}$ with $\bm{n} \neq 0$, it holds
  $$ \int \rho_s H_{\bm{n}} \,\d\mu =0 \quad \mbox{for a.e. } s\in (0,T). $$
This shows that $\rho_s$ is constant for a.e. $s\in (0,T)$.

In order to show that the constant is the same for different $s$, replacing $H_{\bm{n}}$ in \eqref{proof.1} by $F\equiv 1$ leads to
  $$0= \alpha(0)\int \rho^{N_i}_0 \,\d\mu + \int_0^T \!\! \int \rho^{N_i}_s \alpha'(s) \,\d\mu\d s.$$
Letting $i\to \infty$ and using the weak-$\ast$ convergence of $\rho^{N_i}$, we get
  $$0= \alpha(0)\int \rho_0 \,\d\mu + \int_0^T \!\! \int \rho_s\, \alpha'(s) \,\d\mu\d s .$$
This implies that $\frac{\d}{\d s} \int \rho_s\,\d\mu =\frac{\d}{\d s} \rho_s =0$ on $(0,T)$ in the distributional sense. The proof is complete.
\end{proof}

\section{Appendix: convergence of the nonlinear term} \label{appendix-drift}

To simplify notations denote by, for $N\in \N$,
  $$\omega_N (x)= \Pi_N \omega (x)= \sum_{k\in \Lambda_N} \<\omega, \tilde e_k\> \tilde e_k(x),\quad x\in\T^2,$$
where the second equality is due to \eqref{sec-2.2}. It follows from \eqref{sec-2.1} that $\omega_N$ is a real-valued smooth function on $\T^2$. According to the Biot--Savart law,
  $$u(\omega_N)(x) = \int K(x-y) \omega_N(y)\,\d y= \pi {\rm i} \sum_{k\in \Lambda_N\setminus \{0\}} \<\omega, \tilde e_k\> \frac{k^\perp}{|k|^2} \tilde e_k(x),$$
which is a real, smooth divergence free vector field on $\T^2$. For any $\phi\in C^\infty(\T^2)$, by integration by parts,
  $$\aligned
  \<u(\omega_N) \cdot \nabla\omega_N, \phi\> &= -\<\omega_N, u(\omega_N) \cdot \nabla\phi\> = - \int \omega_N(x) (u(\omega_N) \cdot \nabla\phi)(x)\,\d x \\
  &= - \int\!\! \int \omega_N(x) \omega_N(y) K(x-y)\cdot \nabla\phi(x)\,\d x \d y.
  \endaligned$$
Using the property $K(x-y) = -K(y-x)$, the above equality can be rewritten as
  $$\<u(\omega_N) \cdot \nabla\omega_N, \phi\> = - \frac12 \int\!\! \int \omega_N(x) \omega_N(y) K(x-y)\cdot (\nabla\phi(x)- \nabla \phi(y))\,\d x \d y.$$
Denoting by
  \begin{equation}\label{eqn-1}
  H_\phi(x,y)= \frac12 K(x-y)\cdot (\nabla\phi(x)- \nabla \phi(y)),\quad (x,y)\in \T^2\times \T^2,
  \end{equation}
we obtain
  \begin{equation}\label{eqn-2}
  \<u(\omega_N) \cdot \nabla\omega_N, \phi\> = - \int\!\! \int \omega_N(x) \omega_N(y) H_\phi(x,y) \,\d x \d y = -\<\omega_N \otimes \omega_N, H_\phi \>.
  \end{equation}
Here $\omega_N \otimes \omega_N$ is a smooth function on $\T^2\times \T^2$ with the expression
  \begin{equation}\label{eqn-3}
  (\omega_N \otimes \omega_N)(x,y) = \sum_{k,l\in \Lambda_N} \<\omega, \tilde e_k\> \<\omega, \tilde e_l\> \tilde e_k(x) \tilde e_l(y).
  \end{equation}

\begin{lemma}\label{lem-app-1}
Assume that $f\in L^2(\T^2\times \T^2, \R)$ is a symmetric function. Then
  $$\big\{\<\omega_N \otimes \omega_N, f \>- \E_\mu\<\omega_N \otimes \omega_N, f \>\big\}_{N\in \N} $$
is a Cauchy sequence in $L^2(H^{-1-},\mu)$.
\end{lemma}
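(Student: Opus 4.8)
The plan is to write $\<\omega_N\otimes\omega_N, f\>$ explicitly as a quadratic form in the Gaussian coordinates $g_k := \<\omega,\tilde e_k\>$ and to recognise the centred sequence as a second order Wiener chaos whose $L^2(\mu)$-norm is controlled by the $\ell^2$-norm of the Fourier coefficients of $f$. Introduce
\[ c_{k,l} = \int_{\T^2}\!\!\int_{\T^2} \tilde e_k(x)\,\tilde e_l(y)\, f(x,y)\,\d x\,\d y, \qquad k,l\in\Z^2_0. \]
Since $\{\tilde e_k\otimes\tilde e_l\}$ is an orthonormal family in $L^2(\T^2\times\T^2,\mathbb C)$, Parseval's inequality gives $\sum_{k,l\in\Z^2_0}|c_{k,l}|^2 \le \|f\|_{L^2}^2 < \infty$. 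By \eqref{eqn-3} one has $\<\omega_N\otimes\omega_N, f\> = \sum_{k,l\in\Lambda_N} c_{k,l}\, g_k g_l$, so the centred quantity is
\[ X_N := \<\omega_N\otimes\omega_N, f\> - \E_\mu\<\omega_N\otimes\omega_N, f\> = \sum_{k,l\in\Lambda_N} c_{k,l}\big( g_k g_l - \E_\mu[g_k g_l]\big). \]

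Next I would exploit that, under the white noise $\mu$, the family $\{g_k\}$ is jointly Gaussian with covariance $\E_\mu[g_k g_l] = \int_{\T^2}\tilde e_k \tilde e_l\,\d x = \delta_{k+l,0}$, so each $X_N$ lies in the second Wiener chaos. For $M<N$ the increment $X_N - X_M$ is the same sum restricted to pairs in $(\Lambda_N\times\Lambda_N)\setminus(\Lambda_M\times\Lambda_M)$, and since $X_N$ is real ($f$ and $\omega_N$ being real) its squared norm equals $\E_\mu[(X_N-X_M)^2]$. I would compute this by Wick's (Isserlis') theorem, using that the centring removes the disconnected pairing:
\[ \E_\mu\big[(g_kg_l - \E_\mu g_kg_l)(g_pg_q - \E_\mu g_pg_q)\big] = \delta_{k+p,0}\delta_{l+q,0} + \delta_{k+q,0}\delta_{l+p,0}. \]
Multiplying by $c_{k,l}c_{p,q}$ and summing collapses the $p,q$ sums and leaves $\sum_{(k,l)} c_{k,l}(c_{-k,-l}+c_{-l,-k})$ over the relevant index set. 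Because $f$ is real one has $c_{-k,-l} = \overline{c_{k,l}}$, and because $f$ is symmetric $c_{l,k}=c_{k,l}$, whence $c_{-l,-k}=\overline{c_{k,l}}$ as well; therefore
\[ \E_\mu\big[(X_N-X_M)^2\big] = 2\!\!\sum_{(k,l)\in(\Lambda_N\times\Lambda_N)\setminus(\Lambda_M\times\Lambda_M)}\!\! |c_{k,l}|^2. \]

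Finally, since $\sum_{k,l\in\Z^2_0}|c_{k,l}|^2<\infty$, the right-hand side is the tail of a convergent series and vanishes as $M,N\to\infty$; this shows that $\{X_N\}$ is Cauchy in $L^2(H^{-1-},\mu)$, as claimed. The step demanding the most care is the second-moment computation: one must verify the covariance identity $\E_\mu[g_kg_l]=\delta_{k+l,0}$ for the complex coordinates, apply Wick's theorem so that the centring cancels exactly the diagonal pairing (including the degenerate contributions from pairs with $l=-k$, where $g_kg_{-k}=|g_k|^2$), and use the reality and symmetry of $f$ to reduce the four Fourier coefficients to $2|c_{k,l}|^2$. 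Everything else is a routine tail estimate.
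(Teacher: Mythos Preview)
Your proof is correct and follows essentially the same route as the paper: both expand $\langle\omega_N\otimes\omega_N,f\rangle$ in the Fourier coordinates $g_k=\langle\omega,\tilde e_k\rangle$, apply the Isserlis--Wick theorem, and arrive at the identity $\E_\mu[(X_N-X_M)^2]=2\sum_{(k,l)\in(\Lambda_N\times\Lambda_N)\setminus(\Lambda_M\times\Lambda_M)}|c_{k,l}|^2$, after which the $L^2$-Cauchy property follows from $\sum|c_{k,l}|^2<\infty$. The only cosmetic difference is that you center first and use the Wick covariance for centred products directly, whereas the paper computes $\E_\mu[(I_M-I_N)^2]$ uncentred, identifies one of the three Wick pairings as $[\E_\mu(I_M-I_N)]^2$, and then subtracts it; the resulting formula \eqref{lem-app-1.4} is identical to yours.
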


\begin{proof}
Denote by
  $$I_N:= \<\omega_N \otimes \omega_N, f \>,\quad N\in \N.$$
For $M,N \in \N$, $M>N$, define $\Lambda_{M,N} = (\Lambda_M \times \Lambda_M) \setminus (\Lambda_N \times \Lambda_N)$ which is a symmetric subset in $\Z^2_0 \times \Z^2_0$. Then by \eqref{eqn-3},
  \begin{equation}\label{lem-app-1.1}
  (\omega_M \otimes \omega_M - \omega_N \otimes \omega_N)(x,y) = \sum_{(k,l)\in \Lambda_{M,N}} \<\omega, \tilde e_k\> \<\omega, \tilde e_l\> \tilde e_k(x) \tilde e_l(y) =: \omega_{M,N}(x,y).
  \end{equation}
Note that $\omega_{M,N}(x,y)$ is real-valued. Now
  $$\aligned
  (I_M -I_N)^2 &= \<\omega_M \otimes \omega_M - \omega_N \otimes \omega_N, f \>^2 = \<\omega_{M,N} , f \>^2\\
  &= \bigg( \int\!\! \int \omega_{M,N}(x,y) f(x,y)\,\d x\d y \bigg)^2 \\
  &= \int\!\! \int \!\! \int \!\! \int f(x_1,y_1) f(x_2, y_2)\, \omega_{M,N}(x_1,y_1)\, \omega_{M,N}(x_2, y_2)\, \d x_1 \d y_1 \d x_2 \d y_2.
  \endaligned$$
We have, by \eqref{lem-app-1.1},
  $$\omega_{M,N}(x_1,y_1)\, \omega_{M,N}(x_2, y_2) = \sum_{(k,l), (i,j)\in \Lambda_{M,N}} \<\omega, \tilde e_k\> \<\omega, \tilde e_l\> \<\omega, \tilde e_i\> \<\omega, \tilde e_j\> \tilde e_k(x_1) \tilde e_l(y_1) \tilde e_i(x_2) \tilde e_j(y_2), $$
hence, denoting by $\E_\mu$ the expectation w.r.t. $\mu$ on $H^{-1-}$,
  \begin{equation}\label{lem-app-1.2}
  \aligned
  \E_\mu \big[ (I_M -I_N)^2 \big]= & \sum_{(k,l), (i,j)\in \Lambda_{M,N}} \E_\mu \big( \<\omega, \tilde e_k\> \<\omega, \tilde e_l\> \<\omega, \tilde e_i\> \<\omega, \tilde e_j\> \big)\\
  &\times\! \int\!\! \int \!\! \int \!\! \int \! f(x_1,y_1) f(x_2, y_2) \tilde e_k(x_1) \tilde e_l(y_1) \tilde e_i(x_2) \tilde e_j(y_2)\, \d x_1 \d y_1 \d x_2 \d y_2.
  \endaligned
  \end{equation}
Recall that
  \begin{equation}\label{prop-gradient-cylinder.3}
  \int \<\omega, \tilde e_p\> \<\omega, \tilde e_q\> \,\d\mu = \int \<\omega, \tilde e_p\> \overline{\<\omega, \tilde e_{-q} \>} \,\d\mu = \delta_{p,-q}, \quad p,q\in \Z^2.
  \end{equation}
By Isserlis--Wick theorem,
  $$\aligned
  \E_\mu \big( \<\omega, \tilde e_k\> \<\omega, \tilde e_l\> \<\omega, \tilde e_i\> \,  \<\omega, \tilde e_j\>\, \big) = & \ \E_\mu \big( \<\omega, \tilde e_k\> \<\omega, \tilde e_l\> \big) \, \E_\mu \big(\<\omega, \tilde e_i\> \<\omega, \tilde e_j\> \big) \\
  & + \E_\mu \big( \<\omega, \tilde e_k\> \<\omega, \tilde e_i\> \big)\, \E_\mu \big(\<\omega, \tilde e_l\> \<\omega, \tilde e_j\> \big) \\
  & + \E_\mu \big( \<\omega, \tilde e_k\> \<\omega, \tilde e_j\> \big) \, \E_\mu \big(\<\omega, \tilde e_l\> \<\omega, \tilde e_i\> \big) \\
  = &\ \delta_{k, -l} \delta_{i, -j} + \delta_{k, -i} \delta_{l, -j} + \delta_{k, -j} \delta_{l, -i} .
  \endaligned$$
Accordingly, the r.h.s. of \eqref{lem-app-1.2} is divided into three terms $J_1, J_2$ and $J_3$.

First, $J_1$ involves those terms such that $(k,-k),(i,-i)\in (\Lambda_M \times \Lambda_M) \setminus (\Lambda_N \times \Lambda_N)$, which implies $k,i \in \Lambda_M \setminus \Lambda_N$. Thus
  $$\aligned
  J_1 &= \sum_{k,i\in \Lambda_M \setminus \Lambda_N} \int\!\! \int \!\! \int \!\! \int f(x_1,y_1) f(x_2, y_2) \tilde e_k(x_1) \tilde e_{-k}(y_1) \tilde e_i(x_2) \tilde e_{-i}(y_2)\, \d x_1 \d y_1 \d x_2 \d y_2 \\
  & = \sum_{k,i\in \Lambda_M \setminus \Lambda_N} \int\!\! \int f(x_1,y_1) \tilde e_k(x_1) \tilde e_{-k}(y_1) \, \d x_1 \d y_1 \int\!\! \int f(x_2, y_2) \tilde e_i(x_2) \tilde e_{-i}(y_2)\, \d x_2 \d y_2\\
  & = \bigg(\sum_{k \in \Lambda_M \setminus \Lambda_N} \int\!\! \int f(x,y) \tilde e_k(x) \tilde e_{-k}(y) \, \d x \d y \bigg)^2 .
  \endaligned$$
Moreover, by \eqref{eqn-3} and \eqref{prop-gradient-cylinder.3},
  \begin{equation}\label{lem-app-1.2.5}
  \aligned
  \E_\mu I_N &= \sum_{k,l \in \Lambda_N} \E_\mu \big(\<\omega, \tilde e_k\> \<\omega, \tilde e_l\>\big) \int\!\! \int f(x,y) \tilde e_k(x) \tilde e_l(y)\,\d x\d y \\
  &= \sum_{k\in \Lambda_N} \int\!\! \int f(x,y) \tilde e_k(x) \tilde e_{-k}(y) \,\d x\d y.
  \endaligned
  \end{equation}
Therefore, we arrive at
  \begin{equation}\label{lem-app-1.3}
  J_1 = \big[\E_\mu (I_M-I_N) \big]^2.
  \end{equation}

Next, recall that $J_2$ corresponds to the case $k=-i$ and $l=-j$ in \eqref{lem-app-1.2}. Therefore,
  $$\aligned
  J_2 &= \sum_{(k,l)\in \Lambda_{M,N}} \int\!\! \int \!\! \int \!\! \int f(x_1,y_1) f(x_2, y_2) \tilde e_k(x_1) \tilde e_l(y_1) \tilde e_{-k}(x_2) \tilde e_{-l}(y_2)\, \d x_1 \d y_1 \d x_2 \d y_2 \\
  & = \sum_{(k,l)\in \Lambda_{M,N}}  \bigg|\int\!\! \int f(x,y) \tilde e_k(x) \tilde e_l(y) \, \d x \d y \bigg|^2 = \sum_{(k,l)\in \Lambda_{M,N}} |\<f,  \tilde e_k\otimes \tilde e_l\>|^2 .
  \endaligned$$
In the same way, since $f$ is symmetric,
  $$\aligned
  J_3 &= \sum_{(k,l)\in \Lambda_{M,N}} \int\!\! \int\!\! \int\!\! \int f(x_1,y_1) f(x_2, y_2) \tilde e_k(x_1) \tilde e_l(y_1) \tilde e_{-l}(x_2) \tilde e_{-k}(y_2)\, \d x_1 \d y_1 \d x_2 \d y_2 \\
  & = \sum_{(k,l)\in \Lambda_{M,N}}  \bigg|\int\!\! \int f(x,y) \tilde e_k(x) \tilde e_l(y) \, \d x \d y \bigg|^2 = \sum_{(k,l)\in \Lambda_{M,N}} |\<f,  \tilde e_k\otimes \tilde e_l\>|^2 .
  \endaligned$$

Combining the above computations with \eqref{lem-app-1.2} and \eqref{lem-app-1.3}, we obtain
  $$ \E_\mu \big[ (I_M -I_N)^2 \big] = \big[\E_\mu (I_M-I_N) \big]^2 + 2 \sum_{(k,l)\in \Lambda_{M,N}} |\<f, \tilde e_k\otimes \tilde e_l \>|^2.$$
This is equivalent to
  \begin{equation}\label{lem-app-1.4}
  \E_\mu \big[ (I_M - \E_\mu I_M) - (I_N - \E_\mu I_N)\big]^2 = 2\sum_{(k,l)\in \Lambda_{M,N}} |\<f,  \tilde e_k\otimes \tilde e_l\>|^2.
  \end{equation}
Since $f\in L^2(\T^2\times \T^2)$, the proof is complete.
\end{proof}

Using the Wiener chaos estimate (cf. \cite[Theorem I.22]{Simon} or \cite[Theorem 4.3]{DaPT}), we can greatly strengthen the above result.

\begin{proposition}\label{appendix-prop}
Let $f\in L^2(\T^2\times \T^2, \R)$ be a symmetric function. Then
  $$\big\{\<\omega_N \otimes \omega_N, f \>- \E_\mu\<\omega_N \otimes \omega_N, f \>\big\}_{N\in \N} $$
is a Cauchy sequence in $L^p(H^{-1-},\mu)$ for any $p\geq 1$.
\end{proposition}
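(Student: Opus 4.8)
The plan is to upgrade the $L^2$ Cauchy property already established in Lemma \ref{lem-app-1} to an $L^p$ Cauchy property for all $p\geq 1$, by exploiting the fact that the random variables in question live in a Wiener chaos of fixed, finite order, on which all $L^p$ norms are comparable.

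First I would observe that, writing $I_N = \<\omega_N \otimes \omega_N, f\>$ as in the proof of Lemma \ref{lem-app-1}, the quantity
  $$I_N = \sum_{k,l\in \Lambda_N} \<\omega, \tilde e_k\> \<\omega, \tilde e_l\> \<f, \tilde e_k \otimes \tilde e_l\>$$
is a homogeneous polynomial of degree two in the jointly Gaussian family $\{\<\omega, e_k\>: k\in \Z^2_0\}$, once the complex pairings are rewritten in terms of the real basis via \eqref{real-basis.1} (this is legitimate since $\omega_N$ is real-valued and $f$ is real and symmetric). Consequently $I_N$ belongs to $\mathcal{C}_0 \oplus \mathcal{C}_1 \oplus \mathcal{C}_2$, and the centered variable $I_N - \E_\mu I_N$, having zero mean, belongs to $\mathcal{C}_1 \oplus \mathcal{C}_2$; being moreover even and homogeneous of degree two, it contains no first-chaos component and therefore lies in the second Wiener chaos $\mathcal{C}_2$. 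The same then holds for each difference $(I_M - \E_\mu I_M) - (I_N - \E_\mu I_N)$, which is a difference of elements of $\mathcal{C}_2$ and hence again lies in $\mathcal{C}_2$.

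Next I would invoke the Wiener chaos (hypercontractivity) estimate cited above: there is a constant $c_p$, depending only on $p$ and the chaos order, such that for every $X \in \mathcal{C}_2$ and every $p\geq 2$,
  $$\|X\|_{L^p(\mu)} \leq c_p \, \|X\|_{L^2(\mu)}.$$
Applying this with $X = (I_M - \E_\mu I_M) - (I_N - \E_\mu I_N)$ and combining it with the explicit identity \eqref{lem-app-1.4}, namely
  $$\E_\mu\big[(I_M - \E_\mu I_M) - (I_N - \E_\mu I_N)\big]^2 = 2\sum_{(k,l)\in \Lambda_{M,N}} |\<f, \tilde e_k \otimes \tilde e_l\>|^2,$$
I obtain a bound on the $L^p(\mu)$ norm by (a constant times) the square root of a tail of the $\ell^2$-series $\sum_{k,l} |\<f, \tilde e_k \otimes \tilde e_l\>|^2$. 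Since $f\in L^2(\T^2\times \T^2)$, this tail tends to $0$ as $M, N \to \infty$, which yields the Cauchy property in $L^p(\mu)$ for all $p\geq 2$. Finally, for $1\leq p<2$ the claim is immediate from the case $p=2$, since $\mu$ is a probability measure and hence $\|\cdot\|_{L^p(\mu)} \leq \|\cdot\|_{L^2(\mu)}$.

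The only genuinely delicate point, and the step I would treat most carefully, is the verification that $I_N - \E_\mu I_N$ really lies in a fixed finite-order chaos: this rests on passing from the complex pairings $\<\omega, \tilde e_k\>$ to the real Gaussian variables $\<\omega, e_k\>$ relative to which the Hermite/Wiener chaos decomposition of Lemma \ref{lem-basis} is defined, and on checking that no higher-order chaos component appears. Once this structural fact is in place, the hypercontractive estimate does all of the remaining work and the proof is short.
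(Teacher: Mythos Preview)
Your proposal is correct and follows essentially the same route as the paper: reduce to the $L^2$ case for $1\le p\le 2$, and for $p>2$ invoke hypercontractivity on a fixed Wiener chaos. The only cosmetic difference is that the paper verifies membership in $\mathcal{C}_2$ by writing out the explicit decomposition
\[
I_N-\E_\mu I_N=\sum_{\substack{k,l\in\Lambda_N\\ k\neq -l}}\<\omega,\tilde e_k\>\<\omega,\tilde e_l\>\<f,\tilde e_k\otimes\tilde e_l\>+\sum_{k\in\Lambda_N}\big(\<\omega,\tilde e_k\>\<\omega,\tilde e_{-k}\>-1\big)\<f,\tilde e_k\otimes\tilde e_{-k}\>,
\]
whereas you argue more abstractly that a homogeneous quadratic in centered Gaussians lies in $\mathcal{C}_0\oplus\mathcal{C}_2$ (your detour through $\mathcal{C}_1$ and the ``even'' remark are harmless but unnecessary).
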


\begin{proof}
The case $1\leq p\leq 2$ follows from Lemma \ref{lem-app-1}. Assume $p>2$ in the sequel. Note that
  $$\aligned
  I_N - \E_\mu I_N &= \sum_{k,l\in \Lambda_N, k\neq -l} \<\omega, \tilde e_k\> \<\omega, \tilde e_l\> \int\!\! \int f(x,y) \tilde e_k(x) \tilde e_l(y) \, \d x \d y\\
  &\hskip13pt + \sum_{k\in \Lambda_N} \big(\<\omega, \tilde e_k\> \<\omega, \tilde e_{-k}\> -1\big) \int\!\! \int f(x,y) \tilde e_k(x) \tilde e_{-k}(y) \, \d x \d y.
  \endaligned$$
We observe that the quantity $I_N - \E_\mu I_N$ belongs to the second Wiener chaos for any $N\geq 1$. Therefore, applying \cite[Theorem I.22]{Simon} with $m=2$ yields
  $$ \big\| (I_M - \E_\mu I_M) - (I_N - \E_\mu I_N)\big\|_{L^p(\mu)} \leq (p-1) \big\| (I_M - \E_\mu I_M) - (I_N - \E_\mu I_N)\big\|_{L^2(\mu)}. $$
This gives us the desired result.
\end{proof}

We need one more preparation.

\begin{lemma}\label{sec-5-lem}
For any fixed $l\in \Z^2_0$, one has
  $$\E_\mu \big\< \omega_N \otimes \omega_N, H_{e_l} \big\> =0 \quad \mbox{for all } N\in \N,$$
where $e_l$ is defined in \eqref{real-basis}.
\end{lemma}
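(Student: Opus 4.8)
The plan is to collapse the Gaussian expectation onto the ``diagonal'' Fourier modes of $H_{e_l}$ and then to check that every diagonal mode already vanishes, thanks to the convolution structure of the Biot--Savart kernel.

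First I would note that $H_{e_l}$ is symmetric and integrable. Symmetry is immediate from \eqref{eqn-1}, since $K(x-y)=-K(y-x)$ flips the sign of both factors; integrability (indeed boundedness) follows because the cancellation $\nabla e_l(x)-\nabla e_l(y)=O(|x-y|)$ compensates the diagonal singularity $|K(x-y)|\lesssim|x-y|^{-1}$. Since $\omega_N\otimes\omega_N$ is a smooth, finite Gaussian sum, $\E_\mu\langle\omega_N\otimes\omega_N,H_{e_l}\rangle$ is a finite quadratic form whose expectation is computed exactly as in \eqref{lem-app-1.2.5}: using $\E_\mu(\langle\omega,\tilde e_k\rangle\langle\omega,\tilde e_m\rangle)=\delta_{k,-m}$ from \eqref{prop-gradient-cylinder.3}, only the terms $m=-k$ survive and
$$\E_\mu\langle\omega_N\otimes\omega_N,H_{e_l}\rangle=\sum_{k\in\Lambda_N}\int_{\T^2}\!\!\int_{\T^2}H_{e_l}(x,y)\,\tilde e_k(x)\,\tilde e_{-k}(y)\,\d x\,\d y=:\sum_{k\in\Lambda_N}c_k.$$

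Next I would show $c_k=0$ for each $k\in\Lambda_N$, writing $H_{e_l}(x,y)=\tfrac12 K(x-y)\cdot(\nabla e_l(x)-\nabla e_l(y))$ and splitting $c_k$ into the $\nabla e_l(x)$ and $\nabla e_l(y)$ contributions. In the first, I integrate in $y$ to form the convolution $\int_{\T^2}K(x-y)\tilde e_{-k}(y)\,\d y=(\tilde e_{-k}\ast K)(x)=-2\pi{\rm i}\,\tfrac{k^\perp}{|k|^2}\tilde e_{-k}(x)$ via \eqref{thm-app.4}; multiplying by $\tilde e_k(x)$ and using $\tilde e_{-k}\tilde e_k\equiv1$ leaves the $x$-integral proportional to $\int_{\T^2}k^\perp\cdot\nabla e_l\,\d x$. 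In the second, the substitution $u=x-y$ gives $\int_{\T^2}K(x-y)\tilde e_k(x)\,\d x=\big(\int_{\T^2}K(u)\tilde e_k(u)\,\d u\big)\tilde e_k(y)=-2\pi{\rm i}\,\tfrac{k^\perp}{|k|^2}\tilde e_k(y)$, and again $\tilde e_k\tilde e_{-k}\equiv1$ leaves the $y$-integral proportional to $\int_{\T^2}k^\perp\cdot\nabla e_l\,\d y$. Both contributions therefore reduce to a multiple of $\int_{\T^2}\nabla e_l\,\d x$.

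It remains to observe that this integral vanishes: by Lemma \ref{lem-2-1}, $\nabla e_l=2\pi l\,e_{-l}$, and $e_{-l}$ has zero average on $\T^2$, so $\int_{\T^2}\nabla e_l\,\d x=0$. Hence $c_k=0$ for every $k$, and summing over $k\in\Lambda_N$ yields $\E_\mu\langle\omega_N\otimes\omega_N,H_{e_l}\rangle=0$ for all $N$ and all $l\in\Z^2_0$. I do not anticipate a genuine obstacle; the only points needing care are the Fubini interchanges and the use of the convolution identity near the diagonal singularity of $K$, both of which are controlled by the $O(|x-y|)$ cancellation that makes $H_{e_l}$ bounded. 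As a consistency check, for $l\in\Lambda_N$ the identity $\langle\omega_N\otimes\omega_N,H_{e_l}\rangle=-\langle b_N(\omega),e_l\rangle$ together with $\div_{\mu_N} b_N=0$ (applied to the linear functional $\omega\mapsto\langle\omega,e_l\rangle$, whose gradient is the constant field $e_l$) gives the same conclusion.
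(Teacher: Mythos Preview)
Your proposal is correct and follows essentially the same route as the paper's proof: both reduce $\E_\mu\langle\omega_N\otimes\omega_N,H_{e_l}\rangle$ to the diagonal sum $\sum_{k\in\Lambda_N}\int\!\!\int H_{e_l}(x,y)\tilde e_k(x)\tilde e_{-k}(y)\,\d x\,\d y$ via \eqref{lem-app-1.2.5}, evaluate the $y$- (resp.\ $x$-) integral through the convolution identity \eqref{thm-app.4}, use $\tilde e_k\tilde e_{-k}\equiv 1$, and conclude from the zero average of $e_{-l}$. The only cosmetic difference is that the paper first substitutes $\nabla e_l=2\pi l\,e_{-l}$ into $H_{e_l}$ before splitting, whereas you keep $\nabla e_l$ and invoke Lemma~\ref{lem-2-1} at the end; your consistency check via $\div_{\mu_N} b_N=0$ is a nice addition not present in the paper.
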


\begin{proof}
By \eqref{eqn-1} and Lemma \ref{lem-2-1}, we have
  $$H_{e_l}(x,y) = \pi (e_{-l}(x) - e_{-l}(y))\, l \cdot K(x-y),\quad (x,y)\in \T^2 \times \T^2.$$
The identity \eqref{lem-app-1.2.5} leads to
  \begin{equation}\label{thm-app.3}
  \E_\mu \big\< \omega_N \otimes \omega_N, H_{e_l} \big\> = \pi \sum_{k\in \Lambda_N} l \cdot \int\!\! \int \tilde e_k(x) \tilde e_{-k}(y) (e_{-l}(x) - e_{-l}(y)) K(x-y)\,\d x\d y.
  \end{equation}
Assume $l\in \Z^2_+$, then $-l\in \Z^2_-$, hence by \eqref{real-basis.1} and \eqref{thm-app.4},
  $$\aligned
  \int\!\! \int \tilde e_k(x) \tilde e_{-k}(y) e_{-l}(x) K(x-y)\,\d x\d y &= \frac1{2{\rm i}} \int \tilde e_{k}(x) (\tilde e_{-l}(x) - \tilde e_l(x)) \,\d x \int \tilde e_{-k}(y) K(x-y)\,\d y\\
  & = - \pi \frac{k^\perp}{|k|^2} \int (\tilde e_{-l}(x) - \tilde e_l(x)) \,\d x =0
  \endaligned$$
as $l\neq 0 \in \Z^2_0$. Similarly, this integral vanishes when $l\in \Z^2_-$. In the same way,
  $$\int\!\! \int \tilde e_k(x) \tilde e_{-k}(y) e_{-l}(y) K(x-y)\,\d x\d y = 0.$$
Thus we obtain the desired result.
\end{proof}

Now we can prove the main results of this part.

\begin{theorem}\label{thm-app}
Let $G= g\circ \Pi_\Lambda$ be a cylindrical function for some $\Lambda \Subset \Z^2$ and $g\in C_P^\infty \big(\R^\Lambda \big)$. Then the series
  $$\big\{\big\< u(\omega_N) \cdot \nabla \omega_N , D G \big\> \big\}_{N\in \N}$$
converges in $L^p(H^{-1-}, \mu)$ for any $p\geq 1$.
\end{theorem}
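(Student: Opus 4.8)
The plan is to reduce the statement to the quadratic functionals $\<\omega_N\otimes\omega_N, H_{e_j}\>$ already controlled by Proposition \ref{appendix-prop} and Lemma \ref{sec-5-lem}. Writing $G=g\circ\Pi_\Lambda$, we have $DG=\sum_{j\in\Lambda}(\partial_j g)(\Pi_\Lambda\omega)\,e_j$, so by linearity together with the fundamental identity \eqref{eqn-2} applied with $\phi=e_j$,
\[
\big\< u(\omega_N)\cdot\nabla\omega_N, DG\big\> = -\sum_{j\in\Lambda}(\partial_j g)(\Pi_\Lambda\omega)\,\<\omega_N\otimes\omega_N, H_{e_j}\>.
\]
Since $\Lambda$ is finite, it suffices to show that for each fixed $j\in\Lambda$ the product $(\partial_j g)(\Pi_\Lambda\omega)\,\<\omega_N\otimes\omega_N, H_{e_j}\>$ is Cauchy in $L^p(H^{-1-},\mu)$.

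First I would check that the kernel $H_{e_j}$ satisfies the hypotheses of Proposition \ref{appendix-prop}, namely that it is symmetric and square-integrable on $\T^2\times\T^2$. Symmetry is immediate from \eqref{eqn-1} and the oddness $K(x-y)=-K(y-x)$ of the Biot--Savart kernel. For integrability, observe that $\nabla e_j(x)-\nabla e_j(y)=O(|x-y|)$ because $e_j$ is smooth, while $|K(x-y)|=O(|x-y|^{-1})$ near the diagonal; hence $H_{e_j}(x,y)=\tfrac12 K(x-y)\cdot(\nabla e_j(x)-\nabla e_j(y))$ is in fact bounded, so $H_{e_j}\in L^2(\T^2\times\T^2)$.

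With this verified, Proposition \ref{appendix-prop} shows that the centred sequence $\{\<\omega_N\otimes\omega_N, H_{e_j}\>-\E_\mu\<\omega_N\otimes\omega_N, H_{e_j}\>\}_N$ is Cauchy in $L^r(H^{-1-},\mu)$ for every $r\ge 1$, and Lemma \ref{sec-5-lem} removes the centring since $\E_\mu\<\omega_N\otimes\omega_N, H_{e_j}\>=0$ for all $N$. Therefore $\{\<\omega_N\otimes\omega_N, H_{e_j}\>\}_N$ is itself Cauchy in every $L^r(\mu)$.

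The last step, handling the product with $(\partial_j g)(\Pi_\Lambda\omega)$, is where the full strength of Proposition \ref{appendix-prop} is needed, and I expect it to be the only genuine subtlety. The factor $(\partial_j g)(\Pi_\Lambda\omega)$ is a polynomial-growth function of the Gaussian vector $\Pi_\Lambda\omega$, so it lies in $\bigcap_{q\ge 1}L^q(\mu)$ but is unbounded; mere $L^2$-convergence of the quadratic factor would not allow passage to the product. Given $p\ge 1$, I would fix $r>p$ and its Hölder conjugate $q$ with $\tfrac1p=\tfrac1q+\tfrac1r$, and estimate
\[
\begin{aligned}
&\big\|(\partial_j g)(\Pi_\Lambda\omega)\big(\<\omega_M\otimes\omega_M, H_{e_j}\>-\<\omega_N\otimes\omega_N, H_{e_j}\>\big)\big\|_{L^p(\mu)} \\
&\qquad\le \big\|(\partial_j g)(\Pi_\Lambda\omega)\big\|_{L^q(\mu)}\,\big\|\<\omega_M\otimes\omega_M, H_{e_j}\>-\<\omega_N\otimes\omega_N, H_{e_j}\>\big\|_{L^r(\mu)},
\end{aligned}
\]
which tends to $0$ as $M,N\to\infty$ by the previous paragraph. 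Summing the finitely many contributions over $j\in\Lambda$ and invoking completeness of $L^p(\mu)$ yields the claimed convergence; the interplay between the unboundedness of $(\partial_j g)$ and the hypercontractive $L^p$ bounds that absorb it is the crux of the argument.
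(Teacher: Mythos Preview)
Your proof is correct and follows essentially the same route as the paper: you reduce to the finite sum over $j\in\Lambda$ via \eqref{eqn-2}, invoke Proposition~\ref{appendix-prop} together with Lemma~\ref{sec-5-lem} to control $\<\omega_N\otimes\omega_N, H_{e_j}\>$ in every $L^r$, and then absorb the polynomial-growth factor $(\partial_j g)(\Pi_\Lambda\omega)$ via H\"older. The paper uses the specific choice $q=r=2p$ (Cauchy--Schwarz) rather than your general H\"older pair, and it does not spell out the verification that $H_{e_j}\in L^2(\T^2\times\T^2)$, which you helpfully include; otherwise the arguments coincide.
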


\begin{proof}
We have, by \eqref{eqn-2},
  \begin{equation}\label{thm-app.0}
  D_N := \big\< u(\omega_N) \cdot \nabla \omega_N , D G \big\> = -\sum_{l\in \Lambda} \big(\partial_{\xi_l} g\big)(\Pi_\Lambda \omega) \big\< \omega_N \otimes \omega_N, H_{e_l} \big\> .
  \end{equation}
Therefore, for $M,N\in \N,\, M>N$,
  \begin{equation*}
  \aligned
  \E\big(|D_M - D_N |^p \big) &= \E \bigg( \bigg| \sum_{l\in \Lambda} \big(\partial_{\xi_l} g\big)(\Pi_\Lambda \omega) \big[\big\< \omega_M \otimes \omega_M, H_{e_l} \big\> - \big\< \omega_N \otimes \omega_N, H_{e_l} \big\>\big]\bigg|^p \bigg) \\
  & \leq C_{p, \Lambda} \sum_{l\in \Lambda} \E \Big(\big| \big(\partial_{\xi_l} g\big)(\Pi_\Lambda \omega) \big|^p \big|\big\< \omega_M \otimes \omega_M, H_{e_l} \big\> - \big\< \omega_N \otimes \omega_N, H_{e_l} \big\>\big|^p \Big)\\
  &\leq C_{p, \Lambda, g} \sum_{l\in \Lambda} \Big[\E \Big( \big|\big\< \omega_M \otimes \omega_M, H_{e_l} \big\> - \big\< \omega_N \otimes \omega_N, H_{e_l} \big\>\big|^{2p} \Big) \Big]^{1/2},
  \endaligned
  \end{equation*}
where in the last step we used Cauchy's inequality and the fact that $\partial_{\xi_l} g$ has polynomial growth for any $l\in \Lambda$. Applying Proposition \ref{appendix-prop} and Lemma \ref{sec-5-lem} yields the desired assertion.
\end{proof}

\section{Appendix: proof of the second assertion of Proposition \ref{prop-operator}} \label{appendix-diffusion}

Recall the definition of $R_{l,m}(N)$ in \eqref{prop-operator.2}. Since $l,m\in \Lambda$ is fixed, we shall simply write it as $R_N$.  Hence we consider
  \begin{equation}\label{first-limit}
  R_N= \sum_{|k|\leq N} C_{k,l} C_{k,m} \big(\<\omega, e_k e_l \> \< \omega, e_k e_m \> - \delta_{l,m} \big).
  \end{equation}
Note that, for all $N\geq 1$, $R_N$ belongs to the direct sum of the first two Wiener chaos, thus by \cite[Theorem I.22]{Simon}, it suffices to prove $R_N$ converges in $L^2(H^{-1-}, \mu)$.

\subsection{Case 1: $l\neq m$}

We have, for $0<N <M$,
  $$R_M- R_N= \sum_{N<|k| \leq M} C_{k,l} C_{k,m} \<\omega, e_k e_l \> \< \omega, e_k e_m \> $$
and
  \begin{equation*}
  \aligned
  \E_\mu \big[(R_M- R_N)^2 \big] = \sum_{N<|k|, |k'| \leq M} C_{k,l} C_{k,m} C_{k',l} C_{k',m} \E_\mu \big[\<\omega, e_k e_l \> \< \omega, e_k e_m \> \<\omega, e_{k'} e_l \> \< \omega, e_{k'} e_m \> \big].
  \endaligned
  \end{equation*}
By Isserlis-Wick theorem,
  $$\aligned
  &\hskip13pt \E_\mu \big[\<\omega, e_k e_l \> \< \omega, e_k e_m \> \<\omega, e_{k'} e_l \> \< \omega, e_{k'} e_m \> \big] \\
  &= \E_\mu\big[\<\omega, e_k e_l \> \<\omega, e_k e_m \> \big]\, \E_\mu\big[\<\omega, e_{k'} e_l \> \<\omega, e_{k'} e_m \> \big] \\
  &\hskip13pt + \E_\mu \big[\<\omega, e_k e_l \> \<\omega, e_{k'} e_l \> \big]\, \E_\mu \big[ \< \omega, e_k e_m \> \<\omega, e_{k'} e_m \> \big] \\
  &\hskip13pt + \E_\mu \big[\<\omega, e_k e_l \> \<\omega, e_{k'} e_m \> \big]\, \E_\mu \big[ \<\omega, e_k e_m \> \<\omega, e_{k'} e_l \> \big] .
  \endaligned$$
Accordingly, we can write
  \begin{equation}\label{eq-3}
  \E_\mu \big[(R_M- R_N)^2 \big]= S_1 + S_2 + S_3.
  \end{equation}

\subsubsection{The quantity $S_1$}

We have
  $$\aligned
  S_1 &= \sum_{N<|k|, |k'| \leq M} C_{k,l} C_{k,m} C_{k',l} C_{k',m} \E_\mu\big[\<\omega, e_k e_l \> \<\omega, e_k e_m \> \big]\, \E_\mu\big[\<\omega, e_{k'} e_l \> \<\omega, e_{k'} e_m \> \big] \\
  &= \bigg( \sum_{N<|k| \leq M} C_{k,l} C_{k,m} \E_\mu\big[\<\omega, e_k e_l \> \<\omega, e_k e_m \> \big] \bigg)^2.
  \endaligned$$
Note that
  $$\E_\mu\big[\<\omega, e_k e_l \> \<\omega, e_k e_m \> \big] = \int_{\T^2} e_k^2 e_l e_m \,\d x.$$
We deduce from \eqref{prop-gradient-cyl.3} and the fact $l\neq m$ that
  \begin{equation}\label{s-1}
  S_1=0.
  \end{equation}

\subsubsection{The quantity $S_2$} \label{sec-s-2}
By Cauchy's inequality,
  $$\aligned
  S_2&= \sum_{N<|k|, |k'| \leq M} C_{k,l} C_{k,m} C_{k',l} C_{k',m} \E_\mu \big[\<\omega, e_k e_l \> \<\omega, e_{k'} e_l \> \big]\, \E_\mu \big[ \< \omega, e_k e_m \> \<\omega, e_{k'} e_m \> \big]\\
  &\leq \bigg[\sum_{N<|k|, |k'| \leq M} C_{k,l}^2C_{k',l}^2 \Big(\E_\mu \big[\<\omega, e_k e_l \> \<\omega, e_{k'} e_l \> \big] \Big)^2 \bigg]^{1/2}\\
  &\hskip13pt \times \bigg[\sum_{N<|k|, |k'| \leq M} C_{k,m}^2C_{k',m}^2 \Big(\E_\mu \big[\<\omega, e_k e_m \> \<\omega, e_{k'} e_m \> \big] \Big)^2 \bigg]^{1/2}.
  \endaligned$$
We only consider the sum
  \begin{equation}\label{eq-s-2.1}
  \aligned
  J_{N,M} &= \sum_{N<|k|, |k'| \leq M} C_{k,l}^2 C_{k',l}^2 \Big(\E_\mu \big[\<\omega, e_k e_l \> \<\omega, e_{k'} e_l \> \big] \Big)^2\\
  &= \sum_{N<|k|, |k'| \leq M} C_{k,l}^2 C_{k',l}^2 \bigg(\int_{\T^2} e_k e_{k'} e_l^2 \,\d x\bigg)^2.
  \endaligned
  \end{equation}
Intuitively, this quantity tends to 0 as $M>N \to \infty$ due to the fact that the integral $\int_{\T^2} e_k e_{k'} e_l^2 \,\d x \neq 0$ imposes a constraint on $k$ and $k'$, e.g. $k=k'$ or $2l =k+k'$. Such constraint reduces the degree of freedom of $k$ and $k'$, and implies
  $$J_{N,M} \leq C_l \sum_{N<|k| \leq M} \frac1{|k|^4} \to 0 \quad \mbox{as } M>N \to \infty. $$

Below we present a more rigorous proof. Assume $l\in \Z^2_+$ for simplicity. Note that
  \begin{equation}\label{eq-5}
  \aligned
  \int_{\T^2} e_k e_{k'} e_l^2 \,\d x = \int_{\T^2} e_k e_{k'} (1+ \cos(4\pi l\cdot x)) \,\d x = \delta_{k,k'} + \int_{\T^2} e_k e_{k'} \cos(4\pi l\cdot x) \,\d x.
  \endaligned
  \end{equation}
If $k, k'\in \Z^2_+$, then
  \begin{equation*}
  \aligned
  e_k e_{k'} &= 2\cos(2\pi k\cdot x) \cos(2\pi k'\cdot x) = \cos(2\pi (k+k')\cdot x) + \cos(2\pi (k-k')\cdot x).
  \endaligned
  \end{equation*}
Note that $k+k'\in \Z^2_+$ while $k-k'$ may belong to either $\Z^2_+$ or $\Z^2_-$, even vanishes. Hence
  $$\aligned
  \int_{\T^2} e_k e_{k'} \cos(4\pi l\cdot x) \,\d x &= \int_{\T^2} \cos(2\pi (k+k')\cdot x) \cos(4\pi l\cdot x) \,\d x \\
  &\hskip13pt + \int_{\T^2} \cos(2\pi (k-k')\cdot x) \cos(4\pi l\cdot x) \,\d x \\
  &= \frac12 \big(\delta_{2l, k+ k'}+ \delta_{2l,\pm( k- k')} \big).
  \endaligned $$
Recall that $|k|, |k'|>N$, thus at most one of the norms $|k+k'|$ and $|k-k'|$ is less than $N$. Since $l\in \Z^2_+$ is fixed and $N\gg 1$, we conclude that at most one of the two quantities $\delta_{2l, k+ k'}$ and $\delta_{2l,\pm( k- k')}$ is nonzero. To summarize, we have obtained
  \begin{equation*}
  \int_{\T^2} e_k e_{k'} e_l^2 \,\d x = \delta_{k,k'} + \frac12 \big(\delta_{2l, k+ k'}+ \delta_{2l,\pm( k- k')} \big) .
  \end{equation*}
Therefore,
  \begin{equation}\label{eq-5.5}
  \bigg(\int_{\T^2} e_k e_{k'} e_l^2 \,\d x \bigg)^2= \delta_{k,k'} + \frac14 \big(\delta_{2l, k+ k'}+ \delta_{2l,\pm( k- k')} \big)
  \end{equation}
as all the cross products vanish.

If $k, k'\in \Z^2_-$, then
  \begin{equation*}
  \aligned
  e_k e_{k'} &= 2\sin(2\pi k\cdot x) \sin(2\pi k'\cdot x) = - \cos(2\pi (k+k')\cdot x) + \cos(2\pi (k-k')\cdot x) .
  \endaligned
  \end{equation*}
Thus we have
  $$\int_{\T^2} e_k e_{k'} \cos(4\pi l\cdot x) \,\d x = \frac12\big(- \delta_{2l, -(k+k')} + \delta_{2l, \pm(k-k')} \big) .$$
Combining this with \eqref{eq-5} leads to
  \begin{equation*}
  \int_{\T^2} e_k e_{k'} e_l^2 \,\d x= \delta_{k,k'} + \frac12\big(- \delta_{2l, -(k+k')} + \delta_{2l, \pm(k-k')} \big) .
  \end{equation*}
As a result,
  \begin{equation}\label{eq-7}
  \bigg(\int_{\T^2} e_k e_{k'} e_l^2 \,\d x \bigg)^2 = \delta_{k,k'} + \frac14\big( \delta_{2l, -(k+k')} + \delta_{2l, \pm(k-k')} \big) .
  \end{equation}

If $k\in \Z^2_+, k'\in \Z^2_-$, then
  \begin{equation*}
  \aligned
  e_k e_{k'} &= 2\cos(2\pi k\cdot x) \sin(2\pi k'\cdot x) = \sin(2\pi (k+k')\cdot x) - \sin(2\pi (k-k')\cdot x).
  \endaligned
  \end{equation*}
It is clear that
  $$\int_{\T^2} e_k e_{k'} e_l^2 \,\d x = \int_{\T^2} e_k e_{k'} \, (1+ \cos(4\pi l\cdot x) ) \,\d x = 0.$$
Similar result holds if $k\in \Z^2_-$ and $ k'\in \Z^2_+$.

Summarizing these arguments with \eqref{eq-s-2.1}, \eqref{eq-5.5} and \eqref{eq-7}, we obtain
  $$\aligned
  J_{N,M} &= \sum_{\substack{N<|k|, |k'| \leq M\\ k,k'\in \Z^2_+}} C_{k,l}^2 C_{k',l}^2 \bigg[ \delta_{k,k'} + \frac14 \big(\delta_{2l, k+ k'}+ \delta_{2l,\pm( k- k')} \big) \bigg] \\
  &\hskip13pt + \sum_{\substack{N<|k|, |k'| \leq M\\ k,k'\in \Z^2_-}} C_{k,l}^2 C_{k',l}^2 \bigg[ \delta_{k,k'} + \frac14 \big(\delta_{2l, -(k+k')}+ \delta_{2l,\pm( k- k')} \big) \bigg]
  \endaligned$$
It is obvious that the terms involving $\delta_{k,k'}$ tend to 0 as $M>N \to \infty$. Regarding the convergence of the other terms, we take the one containing $\delta_{2l, k+ k'}$ as an example. We have
  $$\aligned
  \sum_{\substack{N<|k|, |k'| \leq M\\ k,k'\in \Z^2_+}} C_{k,l}^2 C_{k',l}^2 \delta_{2l, k+ k'} &= \sum_{\substack{N<|k| \leq M\\ k\in \Z^2_+}} C_{k,l}^2 C_{2l-k,l}^2 {\bf 1}_{\{N<|2l -k| \leq M\}} {\bf 1}_{\{2l -k\in \Z^2_+\} } \\
  &\leq  \sum_{\substack{N<|k| \leq M\\ k\in \Z^2_+}} \frac{(k^\perp \cdot l)^2}{|k|^4} \frac{((2l-k)^\perp \cdot l)^2}{|2l -k|^4} \\
  &\leq C_l \sum_{\substack{N<|k| \leq M\\ k\in \Z^2_+}} \frac{1}{|k|^4} \to 0 \quad \mbox{as } M>N \to \infty.
  \endaligned$$

To summarize, we have proved
  \begin{equation}\label{s-2}
  \lim_{M>N \to \infty} S_2 =0.
  \end{equation}

\subsubsection{The quantity $S_3$} \label{sec-s-3}

Recall that
  $$S_3= \sum_{N<|k|, |k'| \leq M} C_{k,l} C_{k,m} C_{k',l} C_{k',m} \E_\mu \big[\<\omega, e_k e_l \> \<\omega, e_{k'} e_m \> \big]\, \E_\mu \big[ \<\omega, e_k e_m \> \<\omega, e_{k'} e_l \> \big] .$$
We have
  \begin{equation*}
  \E_\mu \big[\<\omega, e_k e_l \> \<\omega, e_{k'} e_m \> \big]= \E_\mu \big[ \<\omega, e_k e_m \> \<\omega, e_{k'} e_l \>\big] = \int_{\T^2} e_k e_l e_{k'} e_m \,\d x.
  \end{equation*}
Similar to the arguments in Section \ref{sec-s-2}, the fact that the integral $\int_{\T^2} e_k e_l e_{k'} e_m \,\d x \neq 0$ implies that $k$ and $k'$ are related to each other by some linear constraints, which in turn yield that $S_3$ tends to 0 as $M>N \to \infty$. We omit the details here.

Combining this with \eqref{eq-3}, \eqref{s-1} and \eqref{s-2} leads to
  $$\lim_{N,M\to \infty} \E_\mu \big[(R_M- R_N)^2 \big] =0.$$
Therefore, $R_N$ defined in \eqref{first-limit} is a Cauchy sequence in $L^2(\mu)$ in the case $l\neq m$.

\subsection{Case 2: $l= m$}

In this case, we have
  $$R_M- R_N= \sum_{N<|k| \leq M} C_{k,l}^2 \big(\<\omega, e_k e_l \>^2 -1 \big).$$
Therefore,
  \begin{equation}\label{case-2.0.1}
  \aligned &\hskip13pt \E_\mu \big[(R_M- R_N)^2 \big] = \sum_{N<|k|, |k'| \leq M} C_{k,l}^2 C_{k',l}^2 \E_\mu \big[ \big(\<\omega, e_k e_l \>^2 -1 \big) \big(\<\omega, e_{k'} e_l \>^2 -1 \big) \big]. \endaligned
  \end{equation}

We have
  \begin{equation}\label{case-2.0.0}
  \aligned &\hskip13pt \E_\mu \big[ (\<\omega, e_k e_l \>^2 -1 ) (\<\omega, e_{k'} e_l \>^2 -1 ) \big] \\
  &= \E_\mu \big[ \<\omega, e_k e_l \>^2 \<\omega, e_{k'} e_l \>^2 \big] - \E_\mu \big[ \<\omega, e_k e_l \>^2 \big] - \E_\mu \big[ \<\omega, e_{k'} e_l \>^2 \big] +1.
  \endaligned
  \end{equation}
Note that
  $$\aligned
  \sum_{N<|k|, |k'| \leq M} C_{k,l}^2 C_{k',l}^2 \E_\mu \big[ \<\omega, e_k e_l \>^2 \big] &= \bigg(\sum_{N<|k'| \leq M} C_{k',l}^2 \bigg) \sum_{N<|k| \leq M} C_{k,l}^2 \int_{\T^2} e_k^2(x) e_l^2(x) \,\d x.
  \endaligned$$
Similar to \eqref{cor-4}, it holds
  \begin{equation}\label{case-2.4.5}
  \sum_{N<|k| \leq M} C_{k,l}^2 e_k^2(x) = \sum_{N<|k| \leq M} C_{k,l}^2=: c_{M,N}.
  \end{equation}
Thus,
  \begin{equation}\label{case-2.5}
  \sum_{N<|k|, |k'| \leq M} C_{k,l}^2 C_{k',l}^2 \E_\mu \big[ \<\omega, e_k e_l \>^2 \big] = c_{M,N}^2 \int_{\T^2} e_l^2(x) \,\d x =c_{M,N}^2.
  \end{equation}
In the same way,
  \begin{equation}\label{case-2.6}
  \sum_{N<|k|, |k'| \leq M} C_{k,l}^2 C_{k',l}^2 \E_\mu \big[ \<\omega, e_{k'} e_l \>^2 \big] =c_{M,N}^2.
  \end{equation}

Next we deal with the first term in the second line of \eqref{case-2.0.0}.  By the Isserlis--Wick theorem,
  \begin{equation}\label{case-2.0}
  \aligned \E_\mu \big[ \<\omega, e_k e_l \>^2 \<\omega, e_{k'} e_l \>^2 \big] &= \E_\mu \big[ \<\omega, e_k e_l \>^2 \big] \E_\mu \big[ \<\omega, e_{k'} e_l \>^2 \big] \\
  &\hskip13pt + 2 \big( \E_\mu [ \<\omega, e_k e_l \> \<\omega, e_{k'} e_l \> ] \big)^2.
  \endaligned
  \end{equation}
We have, using \eqref{case-2.4.5},
  \begin{equation}\label{case-2.1}
  \aligned
  &\sum_{N<|k|, |k'| \leq M} C_{k,l}^2 C_{k',l}^2 \E_\mu \big[ \<\omega, e_k e_l \>^2 \big] \E_\mu \big[ \<\omega, e_{k'} e_l \>^2 \big] \\
  = &\, \bigg(\sum_{N<|k| \leq M} C_{k,l}^2 \E_\mu \big[ \<\omega, e_k e_l \>^2 \big] \bigg)^2 = c_{M,N}^2.
  \endaligned
  \end{equation}
Combining \eqref{case-2.5}--\eqref{case-2.1} with \eqref{case-2.0.1} and \eqref{case-2.0.0}, we obtain
  $$\E_\mu \big[(R_M- R_N)^2 \big] =  2 \sum_{N<|k|, |k'| \leq M} C_{k,l}^2 C_{k',l}^2 \big( \E_\mu [ \<\omega, e_k e_l \> \<\omega, e_{k'} e_l \> ] \big)^2.$$
Note that the r.h.s. is two times of $J_{M,N}$ defined in \eqref{eq-s-2.1}. Thus, using the results in Section \ref{sec-s-2}, we conclude that
  $$\aligned
  \E_\mu \big[(R_M- R_N)^2 \big] & \leq C_l \sum_{N<|k| \leq M} \frac{1}{|k|^4},
  \endaligned $$
which tends to 0 as $N,M \to \infty$.

\medskip

\noindent \textbf{Acknowledgement.} The second author is grateful to the financial supports of the National Natural Science Foundation of China (Nos. 11571347, 11688101), and the Special Talent Program of the Academy of Mathematics and Systems Science, Chinese Academy of Sciences.

\end{document}